\theoremstyle{plain}
\newtheorem{thm}[equation]{Theorem}
\newtheorem{cor}[equation]{Corollary}
\newtheorem{prop}[equation]{Proposition}
\newtheorem{lem}[equation]{Lemma}
\newtheorem{conj}[equation]{Conjecture}
\theoremstyle{definition}
\newtheorem*{defn}{Definition}
\theoremstyle{remark}
\newtheorem{examp}[equation]{Example}
\newtheorem{examps}[equation]{Examples}
\newtheorem{rem}[equation]{Remark}
\newtheorem{rems}[equation]{Remarks}
\newtheorem{claim}[equation]{Claim}
\renewcommand{\subsection}{\@startsection{subsection}{2}{0pt}{-3ex
plus -1ex minus -0.2ex}{-2mm plus -0pt minus
-2pt}{\normalfont\bfseries}} \makeatother
\numberwithin{equation}{subsection}
\newlength{\dhatheight}
\newcommand{\hdot}{{\:\raisebox{2pt}{\text{\circle*{1.5}}}}}
\newcommand{\idot}{{\:\raisebox{2pt}{\text{\circle*{1.5}}}}}
\newcommand{\hd}{H^{\:\raisebox{2pt}{\text{\circle*{1.5}}}}}
\newcommand{\rst}[1]{\ensuremath{{\mathbin|}\raise-.5ex\hbox{$#1$}}}  
\DeclareMathOperator{\grmod}{\!\text{-}\mathrm{grmod}}
\DeclareMathOperator{\Wh}{\mathrm{Wh}}
\DeclareMathOperator{\Ext}{\mathrm{Ext}}
\DeclareMathOperator{\sym}{\mathrm{Sym}}
\DeclareMathOperator{\im}{\mathrm{Im}}
\DeclareMathOperator{\supp}{\mathrm{Supp}}
\DeclareMathOperator{\Ker}{\mathrm{Ker}}
\DeclareMathOperator{\End}{\mathrm{End}}
\DeclareMathOperator{\cend}{{\mathcal E}\!\textit{nd}} 
\DeclareMathOperator{\rk}{{\mathrm{rk}}\,\g}
\DeclareMathOperator{\gr}{\mathrm{gr}}
\DeclareMathOperator{\Lie}{\mathrm{Lie}}
\DeclareMathOperator{\rep}{\mathsf{Rep}}
\DeclareMathOperator{\Ad}{\mathrm{Ad}}
\DeclareMathOperator{\ad}{\mathrm{ad}}
\DeclareMathOperator{\loc}{_{\op{loc}}}
\DeclareMathOperator{\Spec}{\mathrm{Spec}}
\DeclareMathOperator{\pr}{pr}
\newcommand{\iso}{{\;\stackrel{_\sim}{\to}\;}}
\newcommand{\cd}{\!\cdot\!}
\renewcommand{\mod}{{\,\operatorname{\textsl{mod}}\ }}
\newcommand{\erem}{\hfill$\lozenge$\end{rem}}
\newcommand{\eerem}{\hfill$\lozenge$\end{rem}\vskip 3pt }
\newcommand{\dis}{\displaystyle}
\newcommand{\beq}{\begin{equation}\label}
\newcommand{\eeq}{\end{equation}}
\def\ccirc{{{}_{\,{}^{^\circ}}}}
\DeclareMathOperator{\Hom}{\mathrm{Hom}}
\newcommand{\fe}{{\mathfrak e}}
\newcommand{\zh}{{Z_\hb}}
\newcommand{\BA}{{\mathbb A}}
\newcommand{\uph}{{\bu^\psi_\hb}}
\newcommand{\pt}{{\op{pt}}}
\newcommand{\M}{\textrm{M}}
\newcommand{\sy}{{\op{S}}}
\newcommand{\BG}{{\mathbf G}}
\newcommand{\bg}{{\mathbf G}}
\newcommand{\chb}{{\C[\hbar]}}
\newcommand{\aff}{_{\op{aff}}}
\newcommand{\bfv}{{\mathbf v}}
\newcommand{\ce}{{\mathcal E}}
\newcommand{\T}{{\mathcal T}}
\newcommand{\YY}{{\T^*(G/U)}}
\newcommand{\XX}{{\T^\psi(G/\bU)}}
\newcommand{\opp}{^{op}}
\newcommand{\De}{\Delta }
\newcommand{\vol}{{\operatorname{vol}}}
\newcommand{\resp}{ \quad\text{resp.}\quad }
\newcommand{\jj}{{\mathcal J}}
\newcommand{\bU}{{\bar{U}}}
\newcommand{\bo}{\mbox{$\bigotimes$}}
\renewcommand{\o}{\otimes }
\newcommand{\bplus}{\mbox{$\bigoplus$}}
\newcommand{\ccong}{\ \cong \  }
\newcommand{\bfp}{{\mathbf p}}
\newcommand{\ver}{{\mathsf M}}
\newcommand{\verh}{{\mathsf M}_\hb }
\newcommand{\wt}{\widetilde }
\newcommand{\bt}{{\mathsf T}}
\newcommand{\hc}{{\textsl{hc}}}
\newcommand{\dds}{{{}^{\dagger\!}\dd}}
\renewcommand{\dag}{{{}^{\dagger\!}}}
\newcommand{\hbo}{|_{\hb=0}}
\newcommand{\Om}{\Omega }
\newcommand{\oom}{{\Xi}}
\newcommand{\Id}{{\operatorname{Id}}}
\newcommand{\BX}{{{\mathsf Q}}}
\newcommand{\BM}{{\mathbb M}}
\newcommand{\bd}{{\mathsf B}}
\newcommand{\dsl}{/\!\!/}
\newcommand{\fl}{{}^{\flat\!}}
\newcommand{\fZ}{{\mathfrak{Z}}}
\newcommand{\cg}{{\mathfrak{Z}}}
\renewcommand{\th}{\theta }
\newcommand{\oinn}{{\overset{_\text{in}}\o}}
\newcommand{\out}{\text{out}}
\newcommand{\fm}{{\mathfrak m}}
\newcommand{\fn}{{\mathfrak n}}
\newcommand{\up}{{\bar{\u}^\psi}}
\newcommand{\ddp}{{\dd^\psi(G/\bar U)}}
\def\ccirc{{{}_{^{\,^\circ}}}}
\newcommand{\ssf}{{\mathsf s}}
\newcommand{\bbb}{{\b_\bbf}}
\newcommand{\bu}{{\bar{\mathfrak u}}}
\newcommand{\fii}{{\mathfrak i}}
\newcommand{\fjj}{{\mathfrak j}}
\newcommand{\fr}{{\mathfrak r}}
\newcommand{\fT}{{\mathfrak T}}
\newcommand{\sat}{{{\mathcal P}_{_{\!}}\textit{erv}}_{G^\vee(\OO)}(\Gr)}
\newcommand{\alg}{^{\textrm{alg}}}
\newcommand{\geom}{^{\textrm{geom}}}
\newcommand{\fh}{{\mathfrak h}}
\newcommand{\eh}{{\mathfrak r}}
\newcommand{\cz}{{\mathcal Z}}
\newcommand{\rs}{_{rs}}
\renewcommand{\sc}{^{\mathrm{sc}}}
\newcommand{\vth}{\vartheta }
\newcommand{\ut}{{\mathcal{U}\t}}
\newcommand{\la}{\lambda}
\newcommand{\J}{{\mathsf J}}
\newcommand{\be}{\beta }
\newcommand{\si}{{s}}
\newcommand{\cy}{\C[Y]}
\newcommand{\op}{\operatorname}
\newcommand{\nt}{{\mathsf A}}
\newcommand{\fk}{{\mathfrak k}}
\newcommand{\fa}{{\mathfrak a}}
\newcommand{\fd}{{\mathfrak d}}
\newcommand{\nf}{{\mathfrak N}}
\newcommand{\uub}{{\UU\b}}
\newcommand{\mux}{{\mu\in\BX }\ }
\newcommand{\ppm}{{\{\pm 1\}}}
\newcommand{\sz}{ \sminus \{0\} }
\newcommand{\dd}{{\mathscr{D}}}
\newcommand{\oo}{{\mathcal{O}}}
\newcommand{\wh}{\widehat }
\newcommand{\ug}{{\mathcal{U}}\mathfrak{g}}
\newcommand{\UU}{{\mathcal{U}}}
\newcommand{\U}{{U}}
\newcommand{\mum}{\mu_{\mathcal X} }
\newcommand{\back}{\backslash }
\newcommand{\tg}{{\widetilde \g}}
\newcommand{\tgr}{{\widetilde\g_r}}
\renewcommand{\u}{{\mathfrak u}}
\newcommand{\ude}{{\boldsymbol{\u}}_\tb }
\newcommand{\bde}{{\boldsymbol{\b}}_\tb }
\newcommand{\GO}{{G^\vee(\OO)}}
\newcommand{\wb}{\b^w}
\newcommand{\bbe}{{\mathsf e}}
\newcommand{\bbf}{{\mathsf f}}
\newcommand{\bbh}{{\mathsf h}}
\newcommand{\sll}{{\mathfrak{sl}_2}}
\newcommand{\ba}{{\mathsf p}}
\renewcommand{\t}{{\mathfrak t}}
\newcommand{\vpi}{\varpi}
\newcommand{\tb}{{\wt\bb}}
\newcommand{\tbm}{{{\wt\bb}_-}}
\newcommand{\IC}{{\operatorname{IC}}}
\newcommand{\KK}{{\mathbf K}}
\newcommand{\OO}{{\mathbf O}}
\newcommand{\Gv}{{\check G}}
\newcommand{\Uvm}{{\check U}_-}
\newcommand{\BS}{{\mathbb S}}
\newcommand{\hb}{\hbar }
\newcommand{\uh}{{\mathcal{U}_\hb}}
\newcommand{\Gr}{{\mathsf{Gr}}}
\newcommand{\ogr}{{\overline{{\mathsf{Gr}}}}}
\newcommand{\zf}{{\xi(z)}}
\newcommand{\al}{\alpha }
\newcommand{\fz}{{\mathfrak z}}
\newcommand{\fc}{{\mathfrak{c}}}
\newcommand{\vkap}{\varkappa }
\newcommand{\kap}{\kappa }
\newcommand{\kz}{\wt\kap }
\newcommand{\bbo}{{\mathbb O}}
\newcommand{\bco}{{\mathbb O}_- }
\newcommand{\ooo}{{\mathbb O}}
\newcommand{\xx}{{\mathcal X}}
\newcommand{\wtx}{Z }
\newcommand{\eps}{\epsilon }
\newcommand{\oox}{{\mathcal X}}
\newcommand{\oxx}{{{\mathcal X}}}
\newcommand{\bbm}{\Omega }
\newcommand{\tom}{{\wt\Omega}}
\newcommand{\C}{\mathbb{C}}
\newcommand{\g}{\mathfrak{g}}
\renewcommand{\b}{{\mathfrak{b}}}
\newcommand{\n}{\mathfrak{n}}
\newcommand{\m}{\mathfrak{m}}
\newcommand{\bb}{{\mathcal B}}
\newcommand{\mm}{{\mathcal M}}
\newcommand{\Gm}{{{\mathbb G}_m}}
\newcommand{\inv}{^{-1}}
\newcommand{\reg}{_{r}}
\newcommand{\cf}{{\mathcal F}}
\newcommand{\Z}{{\mathbb Z}}
\newcommand{\en}{{\enspace}}
\newcommand{\vi}{${\en\sf {(i)}}\;$}
\newcommand{\vii}{${\;\sf {(ii)}}\;$}
\newcommand{\viii}{${\sf {(iii)}}\;$}
\newcommand{\iv}{${\sf {(iv)}}\;$}
\newcommand{\vv}{{\mathbf v}}
\newcommand{\sset}{\subseteq}
\newcommand{\sminus}{\smallsetminus}
\newcommand{\intoo}{\,\xymatrix{\ar@{^{(}->}[r]&}\,}
\newcommand{\ontoo}{\,\xymatrix{\ar@{->>}[r]&}\,}
\newcommand{\into}{\,\hookrightarrow\,}
\newcommand{\too}{\,\longrightarrow\,}
\newcommand{\mto}{\mapsto}
\newcommand{\onto}{\,\twoheadrightarrow\,}
\newcommand{\Ga}{\Gamma }
\newcommand{\gm}{\Gm }
\newcommand{\om}{\omega }
\begin{document}
\title{Differential operators on $G/U$ and the Gelfand-Graev action}
\author{Victor Ginzburg}
\address{V.G.:
Department of Mathematics, University of Chicago,  Chicago, IL 
60637, USA.}
\email{ginzburg@math.uchicago.edu}
\author{David Kazhdan}
\address{D.K.:
Einstein Institute of Mathematics, Hebrew University,
Givat Ram, Jerusalem 91904, Israel}
\email{kazhdan@math.huji.ac.il}
\maketitle

\begin{abstract} Let $G$ be  a complex semisimple group and $U$ its maximal unipotent
subgroup. We study
the algebra $\dd(G/U)$ of algebraic differential operators on $G/U$ and also  its quasi-classical 
counterpart:
the algebra of regular functions on  $\T^*(G/U)$, the cotangent bundle. A long time
ago, S. Gelfand and M. Graev have constructed an action of the Weyl group on  $\dd(G/U)$ by algebra
automorphisms. The Gelfand-Graev construction was not algebraic, it involved analytic methods
in an essential way. We give  a new algebraic construction of the  Gelfand-Graev
action, as well as its quasi-classical counterpart. Our approach is based on Hamiltonian reduction
and involves the ring of Whittaker differential operators on $G/U$, a twisted analogue of  $\dd(G/U)$.

Our main result  has an interpretation, via geometric Satake,  in terms of spherical perverse sheaves
on the affine Grassmanian for the Langlands dual group.
\end{abstract}


{\small
\tableofcontents
}


\section{Introduction}
\subsection{}\label{ss1.1}
Let $G$ be a complex  connected semisimple group and $U$ a maximal unipotent subgroup
of $G$. The ring $\dd(G/U)$  of algebraic differential
operators on  $G/U$  has a rich structure which was analyzed in
\cite{BGG} and studied further more recently in
\cite{BBP}, \cite{LS}, and \cite{GR}. In an  unpublished
paper written in the 1960's,
S. Gelfand and  M. Graev have constructed
 an action of the  Weyl group $W$ on $\dd(G/U)$
by algebra automorphisms. This action is somewhat mysterious 
 due to the fact that it does not come from a 
$W$-action on the variety $G/U$ itself.
 In  rank 1,
the  action  of the nontrivial element 
of $W=\Z/2\Z$ is, essentially,  the Fourier transform
 of polynomial  differential
operators on a 2-dimensional vector space. In the case of higher rank,
the action of  each individual simple reflection is defined by reducing 
to a rank 1 case, but it is not  {\em a priori} clear
 that the resulting  automorphisms
of   $\dd(G/U)$ satisfy the Coxeter relations.
For a   proof  (essentially, due to Gelfand and Graev) that involves analytic
arguments  the reader is referred to
\cite[Proposition 6]{BBP}, cf. also  \cite{BP}, \cite{Ka}, \cite{KL} for closely
related 
results.

One of the goals of the present paper is to provide a different approach
to the  Gelfand-Graev action. Specifically, we will present the algebra
$\dd(G/U)$ as a quantum Hamiltonian reduction (see Theorem \ref{mthm})  in such a way that
the $W$-action on the algebra becomes manifest.

To explain this, recall the general setting of quantum Hamiltonian reduction.
Let $A$ be an associative ring and $I$ a left ideal  of
$A$ (there is also a counterpart of the construction for right ideals). Thus, $A/I$ is a  left $A$-module.
The  quantum Hamiltonian reduction of $A$ with respect to $I$
is defined to be $\big(\End_A\,A/I\big)^{op}$, an opposite of the associative ring of $A$-module
endomorphisms of $A/I$.
More explicitly, let $N(I)=\{a\in A\mid Ia\sset I\}$
be the {\em normalizer} of $I$ in $A$.
By construction, $N(I)$ is a subring of $A$ such that $I$ is a
two-sided
ideal of $N(I)$.  For any  $a\in N(I)$,  the  assignment
 $f_a: x\mto xa$  induces an  endomorphism of $A/I$. Moreover, 
this endomorphism  only depends on $a\mod I$ and one has a ring  isomorphism
\beq{qhr}
\big(\End_A\,A/I\big)^{op}\xleftarrow[\cong]{f_a\leftarrowtail a} N(I)/I\ =\
 \{a\in A\mid (xa-ax)\
\mod  I=0\ \  \forall x\in I\}/I.
\eeq

The following  special cases of quantum Hamiltonian reduction will be especially important for ~us.
\begin{examps}\label{examps}
\vi 
Let $\fk$  be a Lie algebra
and $\imath:\fk\to A$ a Lie algebra map into an associative
algebra $A$, i.e., a linear map such that $\imath([x,y])=\imath(x)\imath(y)-\imath(y)\imath(x),\
\forall x,y\in\fk$.
We let $I=A\fk $ be a left ideal of $A$ generated
by the image of $\imath$.
In this case, we have $N(I)/I=(A/A\fk )^\fk$, the centralizer of
$\imath(\fk)$ in $A/A\fk $.
For any left, resp. right, $A$-module $M$, the right action of 
$(A/A\fk)^\fk$ on $A/A\fk$
induces a left, resp. right,  action of $(A/A\fk)^\fk$ on the space
$\Hom_A(A/A\fk, M)=M^\fk$ of $\fk$-invariants,
 resp. space $M/M\fk=M \o_A (A/A\fk)$ of $\fk$-coinvariants.
Similarly, one can consider a right ideal,
$\fk A$,  generated
by the image of $\imath$ and the corresponding algebra $(A\back\fk A)^\fk$.

\vii Let  $A_1,A_2,Z$, be a triple
of associative rings
and $\iota_i: Z\to A_i,\ i=1,2$,  a pair of ring homomorphisms.
Let $I$ be a right ideal of $A_1^{op}\o A_2$
generated by the elements $\iota_1(z)\o 1-1\o\iota_2(z)$, $z\in Z$.
Then, we have $(A_1\opp\o A_2)/I=A_1\o_Z A_2$.
Therefore, in this case, we obtain
\beq{NI}N(I)/I=\{a_1\o a_2\in A_1\o_Z A_2\mid
\iota_1(z)a_1\o a_2=a_1\o a_2\iota_2(z),\ \forall z\in Z\}=(A_1\o_Z A_2)^Z,
\eeq 
where for any $Z$-bimodule $M$ we put $M^Z:=\{m\in M\mid zm=mz,\ \forall z\in Z\}$.
Multiplication in the ring \eqref{NI} reads as follows:
\[(a_1\o a_2)\cdot (a_1'\o a_2')= (a_1'\cdot a_1)\o (a_2\cdot a_2'),\qquad\forall a_1,a_1'\in A_1,\
a_2,a_2'\in A_2.
\]

By construction, the space $(A_1\opp\o A_2)/I=A_1\o_Z A_2$ comes equipped with
the structure of a left $(A_1\o_Z A_2)^Z$-module given by the `inner' action
$a_1\o a_2:\, (a_1'\o a_2')\mto (a_1'\cdot a_1)\o (a_2\cdot a_2')$,
and the structure of a right $A_1^{op}\o A_2$-module given by the `outer'
action $a_1\o a_2:\, (a_1'\o a_2')\mto (a_1\cdot a_1')\o (a_2'\cdot a_2)$.

Let $E$ be  an $(A_1, A_2)$-bimodule, resp.   $(A_2, A_1)$-bimodule.
There are natural isomorphisms
\begin{align*}
\op{HH}^\hdot(Z,E)&:=
                    \Ext^\hdot_{Z\o Z^{op}}(Z,E)\ccong\op{Ext}^\hdot_{A_1^{op}\o A_2}(A^{op}_1\o_Z A_2,\, E),\en\text{resp.}\\
  \op{HH}_\idot(Z,E)&:=\op{Tor}^{Z\o Z^{op}}_\idot(Z,E)\ccong\op{Tor}^{A_1^{op}\o A_2}_\idot(A^{op}_1\o_Z A_2,\, E).
\end{align*}
Here, $\op{HH}^\hdot(Z,-)$, resp. $\op{HH}_\idot(Z,-)$, stands for Hochschild cohomology,
resp. homology.
Using the above  isomorphisms and the isomorphism in \eqref{qhr}, we see that
each of the groups $\op{HH}^i(Z,E)$, resp. $\op{HH}_i(Z,E)$,
has the natural structure of a right, resp. left, $(A_1\o_Z A_2)^Z$-module.
\end{examps}

\subsection{}\label{ss1.2}
Throughout the paper,  we work over $\C$ and use the notation $\sym \fk$, resp. $\UU\fk$, 
for the symmetric, resp. enveloping, algebra  of a vector space,
resp.   Lie algebra, $\fk$.  For any scheme $X$ put $\C[X]:=\Ga(X,\oo_X)$.
Let  $\T^*X$, resp. $\dd_X$ and $\dd(X)$, denote the cotangent bundle,
resp. the sheaf and ring of algebraic differential operators, on a smooth algebraic variety $X$.

Let $G$ be a complex semisimple group with trivial center, and
 $U, \bar{U}$,  a pair of opposite maximal unipotent subgroups of $G$.
Let $\g$, resp. $\u,\bar{\u}$, denote
the Lie algebra of $G$, resp. $U,\bar{U}$.
We fix
a nondegenerate character $\psi:\bu\to\C$, i.e., such that $\psi(f_i)\neq0$ for every simple root
vector $f_i\in \bu$.

The action of $G$ on itself by right translations gives a Lie
algebra map $\g\to \dd(G)$. Let $\imath$, resp. $\bar\imath$,
denote its restriction to the subalgebra $\u$, resp. $\bu$.
It is well known, cf. 
e.g. \cite[\S3.1]{GR}, that using the notation of
Example \ref{examps}(i), one has $\dd(G/U)\cong (\dd(G)/\dd(G)\u)^{\u}$.
Let
$\up$ be the image of the map $\bu\to\dd(G),\ x\mto \bar\imath(x)-\psi(x)$.
The algebra  of {\em Whittaker differential operators} on $G/\bU$ is defined as
a quantum hamiltonian reduction
$\dd^\psi(G/\bar{U}):=(\dd(G)/\dd(G)\up)^{\up}$, cf. also \S\ref{tors-sec}.
The differential of the action of $G$ on itself by left translations induces an algebra homomorphism
$i:\ug\to \dd(G/U)$, resp. $i^\psi: \ug\to \dd^\psi(G/\bar{U})$.

Let $T$  be the abstract maximal torus of $G$,
and $\t=\Lie T$. We have an imbedding
 $i_T: \ut\into \dd(T)$ as
the subalgebra of translation invariant differential operators.
There is a natural $T$-action on $G/U$ by right translations.
The differential of this
action  induces an algebra homomorphism
$i_r: \ut\to  \dd(G/U)$.

Let $W$ be the (abstract) Weyl group, $Z\g$ the center of the algebra $\ug$,
and $\hc: Z\g\iso(\UU\t)^W$ the Harish-Chandra isomorphism,
where $W$-invariants are taken with respect to the `dot-action' of $W$ on $\UU\t$.
One has the following diagram of algebra homomorphisms:

\beq{diag-q}
\xymatrix{
\dd^\psi(G/\bar{U})\ &\ \ug\ \ar[l]_<>(0.5){i^\psi}
 &\ Z\g\ \ar@{_{(}->}[l]\ar[r]^<>(0.5){\hc}_<>(0.5){\cong} &\ (\ut)^W\
 \ar@{^{(}->}[r]&
\ \ut\ \ar@{^{(}->}[r]^<>(0.5){i_T}& \dd(T).
}
\eeq
Let $\iota_1:  Z\g\to \dd(T)$,  resp.  $\iota_2: Z\g\to \dd^\psi(G/\bar{U})$, 
be the composite homomorphism on the right, resp. left, of   \eqref{diag-q}.
We apply the construction of Hamiltonian reduction in the setting of Example \ref{examps}(ii) for
the triple $A_1=\dd(T),\, A_2=\dd^\psi(G/\bar{U}),\, Z=Z\g$,
and the homomorphisms $\iota_1,\iota_2$.

With the above notation, the main result of the paper reads as follows.

\begin{thm}\label{mthm}
There is a natural  algebra isomorphism
\beq{zz}\dd(G/U)\ \cong\ \big(\dd(T)\ \bo_{Z\g}\ \dd^\psi(G/\bar{U})\big)^{Z\g},
\eeq
such that the map $i_r$, resp. $i$,
corresponds via \eqref{zz} to the map
$u\mto i_T(u)\o1$, resp. ${u\mto 1\o i^\psi(u)}$.
\end{thm}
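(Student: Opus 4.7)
The plan is to realize the RHS of \eqref{zz}, via Example \ref{examps}(ii) applied to $A_1 = \dd(T)$, $A_2 = \dd^\psi(G/\bU)$, $Z = Z\g$, as the endomorphism algebra of a natural bimodule, and to identify that bimodule with $\dd(G/U)$ itself.

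As a preliminary step I would verify the Harish-Chandra identity $i(z) = i_r(\hc(z))$ in $\dd(G/U)$ for all $z\in Z\g$: the action of the center of $\ug$ on $G/U$ by left translation agrees, via $\hc$, with the action of its image in $(\ut)^W$ by right $T$-translation. This is a classical statement about the principal $T$-torsor $\pi: G/U \to G/B$, and it guarantees that any $(\dd(T)^{op}\otimes \dd^\psi(G/\bU))$-bimodule structure on $\dd(G/U)$ compatible with $i$ and $i_r$ will automatically factor through $\dd(T)\otimes_{Z\g} \dd^\psi(G/\bU)$.

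Next I would construct such commuting left and right actions on $\dd(G/U)$ extending $i$ and $i_r$. The right $\dd(T)$-action exploits the principal $T$-torsor $\pi$: viewing $\dd(G/U)$ as the algebra of global $T$-equivariant differential operators on the total space of $\pi$, right multiplication along the $T$-fibers gives a right $\dd(T)$-module structure extending $i_r$. The left $\dd^\psi(G/\bU)$-action is constructed by lifting $[D]\in\dd^\psi(G/\bU)$ to a representative $D\in N(\dd(G)\up)\subset\dd(G)$ and acting on $\dd(G)/\dd(G)\u$ by left multiplication; commutation of left and right translations on $G$ ensures that this descends to $\dd(G/U) = (\dd(G)/\dd(G)\u)^\u$ and is independent of the choice of lift. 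Together with the preceding paragraph, these produce a bimodule map
\[\Phi:\ \dd(T)\otimes_{Z\g}\dd^\psi(G/\bU)\ \longrightarrow\ \dd(G/U).\]

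The main step is to show $\Phi$ is an isomorphism. I would pass to associated graded with respect to the order filtration on differential operators and establish the quasi-classical Poisson-algebra identity
\[\C[\T^*T]\otimes_{\C[\t^*]^W}\C[\T^\psi(G/\bU)]\ \iso\ \C[\T^*(G/U)],\]
which realizes $\T^*(G/U)$ as the fibered product of $\T^*T$ with the Whittaker-reduced cotangent space over $\t^*/W$; this is essentially a Miura / Kostant cross-section identification, and a PBW-flatness argument lifts it to the quantum level. Once $\Phi$ is a bimodule isomorphism, \eqref{qhr} and Example \ref{examps}(ii) identify $(\dd(T)\otimes_{Z\g}\dd^\psi(G/\bU))^{Z\g}$ with $(\End_{\dd(T)^{op}\otimes\dd^\psi(G/\bU)}(\dd(G/U)))^{op}$, and the action of $\dd(G/U)$ on itself by right multiplication gives the final algebra isomorphism, with $i_r$ and $i$ corresponding to their asserted images in the RHS. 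The principal obstacle is the bijectivity of $\Phi$: the classical fibered-product statement demands genuine Lie-theoretic input (Kostant's slice), and its quantization requires a careful flatness or Gauss-decomposition argument on the open Bruhat cell.
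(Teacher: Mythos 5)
Your proposal founders on the identification of the two sides of \eqref{zz}. You assert a \emph{bimodule isomorphism} $\Phi:\dd(T)\otimes_{Z\g}\dd^\psi(G/\bU)\to\dd(G/U)$ and then recover the theorem by taking endomorphisms; but the theorem identifies $\dd(G/U)$ with the $Z\g$-centralizer $\big(\dd(T)\otimes_{Z\g}\dd^\psi(G/\bU)\big)^{Z\g}$, which is a proper subspace of the tensor product, namely the endomorphism algebra of that bimodule, not the bimodule itself. The discrepancy is already visible classically: $\T^*T\times_{\t^*/W}\T^\psi(G/\bU)$ has dimension $2\dim(G/U)+\operatorname{rk}\g$, so your proposed quasi-classical identity $\C[\T^*T]\otimes_{\C[\t^*]^W}\C[\T^\psi(G/\bU)]\cong\C[\T^*(G/U)]$ cannot hold; one must further reduce by the group scheme of regular centralizers (equivalently, take Poisson-commutants with $\C[\t^*/W]$), which is exactly the content of Theorem \ref{thm*}. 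Quantum-mechanically, the bimodule $\dd(T)\otimes_{Z\g}\dd^\psi(G/\bU)$ is the Miura bimodule $\BM\cong\bigoplus_V V^*\otimes\C[T]\otimes(V\otimes\ver)/\up$, which is strictly larger than $\dd(G/U)\cong\bigoplus_V V^*\otimes(V\otimes\ver)^\u$ (both are free $\ut$-modules, but the former carries an extra factor $\C[T]$). A secondary problem: the left $\dd^\psi(G/\bU)$-action you put on $\dd(G/U)$ is ill-defined, because $\u$ and $\up$ both arise from \emph{right} translations on $G$; left multiplication by a lift $D\in N(\dd(G)\up)$ on $\dd(G)/\dd(G)\u$ changes when $D$ is modified by an element of $\dd(G)\up$, and "commutation of left and right translations" does not help. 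The object carrying commuting actions is the two-sided reduction $\u\back\dd(G)/\up$.

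Once these are corrected, what remains to prove is precisely that the inner action of $\dd(G/U)$ on the Miura bimodule exhausts the commutant of the outer $\dd(T)^{op}\otimes\dd^\psi(G/\bU)$-action — a double-centralizer statement that is the actual hard step, and which your proposal does not engage. The paper proves it by decomposing into $G$-isotypic components and reducing to an assertion about singular vectors in the universal Verma module (Theorem \ref{vthm}), which is then deduced from the classical Miura-variety geometry by a Rees-algebra deformation and a semicontinuity argument over $\uh\t$. Note also that the relevant degeneration uses the Kazhdan-type grading (weights of $\bbh$ combined with $\hbar$), not the order filtration: the order filtration does not degenerate $\dd^\psi(G/\bU)$ to $\C[\T^\psi(G/\bU)]$, since the Whittaker character equates an order-one symbol with a constant.
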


The  Weyl group acts  on  the RHS of  \eqref{zz}
via its natural action on 
$\dd(T)$, the first tensor factor.
Thanks to  the theorem,
one can transport the $W$-action  on  the RHS of \eqref{zz} to the LHS.
We obtain a $W$-action on  $\dd(G/U)$ by algebra automorphisms.
One can check, although we will not do that in the present paper, that the $W$-action 
thus obtained is the same as the  Gelfand-Graev action (it is sufficient to check this for
simple reflections, which reduces to a rank one computation). 

\begin{rem} In the main body of the paper we explain how  to define the varieties $G/U, G/\bU$, as well as the
ring $\dd^\psi(G/\bU)$,  in a canonical way that involves neither the choice of a pair of opposite unipotent
subgroups $U,\bU$, see \S\ref{inv},
nor a choice of nondegenerate character $\psi$,
see \S\S\ref{tors-sec}-\ref{gtors}.  The  isomorphism in \eqref{zz} then becomes canonical.
Therefore, our construction of the  Gelfand-Graev action yields a canonical $W$-action,
while the original construction of Gelfand and Graev depends on the choice of a pinning on $G$.
\erem

 Theorem \ref{mthm} combined with
the  observation at the end of Example \ref{examps}(ii) and the fact that each of the algebras
$\dd(G/U)$ and $\dd(T)$ is isomorphic to its opposite, gives the following result:
\begin{cor}\label{tor} For each  $i\geq0$,  the assignment
$E\mto  \op{HH}_i(Z\g,\, E)$, resp.
$E\mto \op{HH}^i(Z\g,\, E)$, gives a functor
from the category
of $(\dd^\psi(G/\bar{U}),\,\dd(T))$-bimodules
to  the category
of $\dd(G/U)$-modules.
\end{cor}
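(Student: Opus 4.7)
The plan is to unpack the three ingredients flagged in the statement: Theorem \ref{mthm}, the observation at the end of Example \ref{examps}(ii), and the fact that $\dd(G/U)$ is isomorphic to its opposite algebra. Specialising the setting of Example \ref{examps}(ii) to $A_1=\dd(T)$, $A_2=\dd^\psi(G/\bU)$, $Z=Z\g$, with the homomorphisms $\iota_1,\iota_2$ of diagram \eqref{diag-q}, the isomorphisms recorded there give, for every bimodule $E$ of the appropriate type,
\begin{align*}
\op{HH}_i(Z\g,E)\ &\cong\ \Tor^{A_1^{op}\o A_2}_i\bigl(A_1^{op}\o_{Z\g} A_2,\,E\bigr),\\
\op{HH}^i(Z\g,E)\ &\cong\ \Ext^i_{A_1^{op}\o A_2}\bigl(A_1^{op}\o_{Z\g} A_2,\,E\bigr),
\end{align*}
and both identifications are manifestly functorial in $E$.

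Next I invoke the concluding observation of Example \ref{examps}(ii): the bimodule $A_1^{op}\o_{Z\g} A_2$ carries an ``inner'' left action of $(A_1\o_{Z\g} A_2)^{Z\g}$ which commutes with the ``outer'' right $A_1^{op}\o A_2$-action used to compute these Tor and Ext groups. Because $\Tor_i$ is covariant in its first variable, this extra left action descends to a left $(A_1\o_{Z\g} A_2)^{Z\g}$-module structure on $\op{HH}_i(Z\g,E)$; because $\Ext^i$ is contravariant in its first variable, it descends instead to a right $(A_1\o_{Z\g} A_2)^{Z\g}$-module structure on $\op{HH}^i(Z\g,E)$. Theorem \ref{mthm} then identifies $(A_1\o_{Z\g} A_2)^{Z\g}\cong\dd(G/U)$, so these become a left and a right $\dd(G/U)$-module, respectively. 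Functoriality in $E$ is automatic from that of Tor and Ext.

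Finally, to convert the right $\dd(G/U)$-module structure on $\op{HH}^i(Z\g,E)$ into a left one, as demanded by the statement, I use an anti-isomorphism $\dd(G/U)\cong\dd(G/U)^{op}$. Such an anti-isomorphism is classical: a nowhere-vanishing $G$-invariant volume form exists on $G/U$ (the stabiliser $U$ is unipotent, hence unimodular on $\g/\u$), and the associated formal-adjoint construction supplies the required anti-automorphism of $\dd(G/U)$. The only step in the whole argument requiring anything beyond the formal machinery of Theorem \ref{mthm} and Example \ref{examps}(ii) is this last identification $\dd(G/U)\cong\dd(G/U)^{op}$; I expect no serious obstacle, since everything else is purely bookkeeping.
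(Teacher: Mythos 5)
Your proposal is correct and unpacks exactly the three ingredients the paper itself cites (Theorem \ref{mthm}, the module structures on $\op{HH}_\idot$ and $\op{HH}^\idot$ from the end of Example \ref{examps}(ii), and the anti-isomorphism $\dd(G/U)\cong\dd(G/U)^{op}$ coming from a $G$-invariant volume form, which is the same justification the paper uses for \eqref{opp-iso}). The variance bookkeeping for Tor versus Ext and the resulting left versus right $\dd(G/U)$-module structures is handled correctly, so this matches the paper's (unwritten) argument.
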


\begin{rem} It is not difficult to show that each of the two algebras
 $\dd(T)$ and $\dd^\psi(G/\bar{U})$ is flat as (either left or right)  $Z\g$-module.
It follows that $H^i\big(\dd(T)\, \overset{L}{\o}_{Z\g}\,\dd^\psi(G/\bar{U})\big)=0$ for all $i>0$.
It
might be interesting to find Hochschild cohomology
groups $\op{HH}^i\big(Z\g,\ \dd(T)\, \o_{Z\g}\, \dd^\psi(G/\bar{U})\big)$ for  $i>0$.
\end{rem}
\subsection{}
Theorem \ref{mthm} has a `quasi-classical' counterpart.
In more detail, 
write $V^\perp\sset\g^*$ for the annihilator of a vector subspace $V\sset \g$ and
let
$ \psi+\bu^\perp:=\{\phi\in\g^*\mid \phi|_\bu=\psi\}$,
 an affine linear subspace of $\g^*$.
Let
\beq{yyy}\YY=G\times_U\u^\perp\resp
\XX:=G\times_\bU(\psi+\bu^\perp).\eeq
Here, the equality on the left is a standard
isomorphism of $G$-equivariant vector bundles on $G/U$; 
the equality on the right is our definition
of $\XX$. The variety $\XX$ is a twisted cotangent bundle on $G/\bU$
that may be thought of as a deformation of
$\T^*(G/\bU)$. In particular, $\XX$ comes equipped with a
natural symplectic structure such that 
the map $\XX\to G/\bU$ is a  $G$-equivariant affine bundle on $G/\bU$
with Lagrangian fibers. 
Further, the $G$-action on $\YY$, resp. $\XX$, is Hamiltonian with
moment map $\mu$, resp. $\mu^\psi$. 
The $T$-action on $G/U$ by right translations
induces a Hamiltonian action on
$\T^*(G/U)$.
Let  $\mu_r: \T^*(G/U)\to\t^*$, resp. $\mu_T:\T^*T \to\t^*$,
be the moment map for the $T$-action on $\T^*(G/U)$, resp.
$\T^*T $.
Finally, let $\g^*\to\fc:=\g^*\dsl G=\Spec(\C[\g^*]^G)$
be the (co)adjoint quotient morphism
and identify $\fc$ with $\t^*/W$ via the Chevalley isomorphism.

A  quasi-classical counterpart of diagram \eqref{diag-q}
reads as follows:
\[
\xymatrix{
\T^\psi(G/\bU)\ \ar[rr]^<>(0.5){\mu^\psi}&&\  \g^*\  \ar@{->>}[r] &\
\fc\ =\ 
\t^*/W\ &\  \t^*\ \ar@{->>}[l] &&
\ \T^* T.\ \ar[ll]_<>(0.5){\mu_T}
}
\]

Let $p_1: \T^*T\to \fc$, resp. $p_2: \T^\psi(G/\bU)\to \fc$, be the composite
map on the right, resp. left, of the diagram.
Further, let $\fc_\De\sset \fc\times\fc$ denote the diagonal
and
$\T ^* T\times_{\fc}\XX:=(p_1\times p_2)\inv(\fc_\De)$.


We equip  $\T ^* T\times\XX$ with the natural symplectic structure of a cartesian product,
where the symplectic form  on the
first  factor is taken
with a negative sign.
It turns out 
that $\T ^* T\times_{\fc}\XX$
is a smooth coisotropic subvariety
of  $\T ^* T\times\XX$; furthermore, the 0-foliation on this subvariety
is generated by Hamiltonian vector fields associated with the
functions $\wt z:=p_1^*(z)-p_2^*(z),\, z\in \C[\fc]$.
It follows that there is a well defined
pairing
\[\C[\fc]\times\C[\T ^* T\times_{\fc}\XX]\to \C[\T ^* T\times_{\fc}\XX],\,
 \en (z,f)\mto \{\wt z,f\},
\]
induced by the Poisson bracket.
Moreover, the Poisson centralizer

\beq{p-var}\C[\T ^* T\times_{\fc}\XX]^{\C[\fc]}\ :=\
\big\{f\in \C[\T ^* T\times_{\fc}\XX]\ \, \big|\ \,  \{\wt z,\,f\}=0
\en\forall z\in\C[\fc]\big\}
\eeq
inherits 
the structure of a Poisson algebra, cf. also Section \ref{fZ-sec}.

The  Poisson algebra in \eqref{p-var} is a quasi-classical counterpart of  \eqref{NI}.
The  quasi-classical   counterpart of Theorem \ref{mthm} reads as follows.
\begin{thm}\label{thm*} There is a natural $G\times T$-equivariant Poisson algebra isomorphism
\[ \C[\YY]\ \cong\ 
\C[\T ^* T\times_{\fc}\XX]^{\C[\fc]},
\]
such that the pull-back $\mu^*_r: \C[\t^*]\to \C[\YY]$, resp.
$\mu^*: \C[\g^*]\to \C[\YY]$, corresponds
via the isomorphism, to the map
$f\mto \mu_T^*(f)\o 1$, resp.
 $f\mto 1\o (\mu^\psi)^*(f)$.
\end{thm}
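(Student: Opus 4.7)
The plan is to realize both sides as Hamiltonian reductions of the common ambient symplectic variety $\T^*G$ and then match them via the Kostant--Whittaker description of $\XX$. Recall the two basic reductions: $\YY = \T^*G/\!\!/_{\!0}\,U$ (Hamiltonian reduction by the right $U$-action at level $0$, with residual $G\times T$-action since $T$ normalizes $U$), and $\XX = \T^*G/\!\!/_{\!\psi}\,\bU$ (Whittaker reduction by the right $\bU$-action at level $\psi$, with residual $G$-action). Kostant's theorem provides a canonical $G$-equivariant isomorphism $\XX\cong G\times(\t^*/W)$ under which $\mu^\psi$ factors through the Chevalley map $\g^*_r\to\t^*/W$; the Poisson-central subalgebra $\C[\t^*/W]\hookrightarrow\C[\XX]$ generates, via Hamiltonian flows, the action of the universal centralizer group scheme $J\to\t^*/W$ on the $G$-factor of $\XX$.

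Consequently $\T^*T\times_{\t^*/W}\XX \cong T\times\t^*\times G$, and the null foliation generated by the Hamiltonian fields of $\wt z=p_1^*z-p_2^*z$, $z\in\C[\t^*/W]$, integrates to an $\operatorname{rk}G$-dimensional diagonal foliation combining translation on $T$ with the $J$-action on $G$. Since the coisotropic quotient has dimension $\dim\g+\operatorname{rk}\g=\dim\YY$, the target dimensions match. The isomorphism $\Phi:\YY\iso\big(\T^*T\times_{\t^*/W}\XX\big)/(\text{null foliation})$ is built as follows: over the open locus where $\mu:\YY\to\g^*$ lands in $\g^*_r$, a point $[g,\xi]\in G\times_U\u^\perp$ is sent to the triple consisting of (i) the $T$-coordinate extracted via the $T$-torsor $G/U\to G/B$ trivialized on the big Bruhat cell, (ii) the $\t^*$-coordinate $\xi|_\t = \mu_r([g,\xi])$, and (iii) the $\XX$-coordinate from the factorization of $\Ad^*(g)\xi\in\g^*_r$ through Kostant. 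Equivariance and the Poisson property are built into the construction, and a dimension count makes $\Phi$ birational; normality of both sides together with a Hartogs-type argument then extends $\Phi$ to the claimed isomorphism, while compatibility with $\mu_r^*$ and $\mu^*$ is immediate from the construction.

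The main obstacle is the Poisson-structural verification in the final step, namely that the coisotropic reduction carries the same symplectic form as $\YY$ rather than merely an isomorphic underlying variety. I expect this to reduce to the assertion that Kostant's identification intertwines the diagonal $T$/universal-centralizer action on $T\times\t^*\times G$ with the right $T$-action on $G/U$, a computation that localizes to the rank-one $\mathrm{SL}_2$ case and is directly verifiable there. An alternative route bypasses the Poisson check entirely by deducing Theorem~\ref{thm*} from Theorem~\ref{mthm} via passage to the associated graded algebras under the filtrations by order of differential operator, provided one first establishes the necessary flatness of $\dd(T)$ and $\dd^\psi(G/\bU)$ over $Z\g$ so that $\gr\bigl(\dd(T)\o_{Z\g}\dd^\psi(G/\bU)\bigr)=\C[\T^*T]\o_{\C[\t^*/W]}\C[\XX]$.
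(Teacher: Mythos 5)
Your overall strategy is the same as the paper's: identify $\XX$ with $G\times(\t^*/W)$ via the Kostant slice, recognize $\T^*T\times_{\t^*/W}\XX$ as $T\times\t^*\times G$ with the null foliation generated by the universal centralizer, compare the coisotropic quotient with $\T^*\tb$ over the regular locus, and conclude by a codimension-two argument at the level of global functions. The dimension count and the identification of the foliation are correct. However, there are two genuine gaps in the execution. First, the $T$-coordinate in step (i) of your construction of $\Phi$ is not well defined: a trivialization of the torsor $G/U\to G/B$ over the big Bruhat cell is neither global nor canonical, and an arbitrary trivialization differs from the correct one by a $T$-valued function that need not lie in the image of the centralizer action, so the resulting map would not descend to the foliation quotient and is only defined on a proper open subset. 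The correct comparison is the content of the Miura variety: for $x\in\g_r$ the Borel $\b\in\pi\inv(x)$ and the Whittaker datum $\bar\b\in(\mu^\psi)\inv(x)$ are automatically in opposite position (Proposition \ref{genpos}(ii)), the canonical isomorphism $\kap_{\bar\b,\b}:\bbo_-(\bar\b)\iso\bbo(\b)$ produces the reference point in the fiber of $\tb\to\bb$, and the essential verification (Proposition \ref{imb}) is that this assignment intertwines the $G_x$-action with the $T$-action through $\vkap_{\b,x}:G_x\to B\to T$ — exactly what makes the $T$-coordinate well defined modulo the centralizer. Second, ``birational plus normality plus Hartogs'' does not yield an isomorphism (a small contraction between normal varieties is birational and not an isomorphism). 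What replaces it in the paper is that $T_\tgr\times_{\pi^*\fZ}Z\to(\T^*\tb)_r$ is a morphism of $T$-torsors over $\tgr$, hence automatically an isomorphism; the codimension-two argument is used only to identify $\C[\T^*\tb]$ with $\C[(\T^*\tb)_r]$, and the passage from functions on the quotient to $\fZ_\fc$-invariant functions is by flat descent along the smooth group scheme, not by Hartogs.

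Two further cautions. Your proposed reduction of the Poisson verification to $\mathrm{SL}_2$ is not justified as stated; in the paper this point is handled globally by Lemma \ref{ham-act} (the $\fZ_\fc$-actions on $\T^*T$ and on $\XX$ are Hamiltonian with moment maps $\th\circ pr_2$ and $\vth\circ\mu^\psi$) together with the fact that the Miura variety is the twisted conormal to $\Xi\subset\tbm\times\tb$ (Proposition \ref{lag}), so the coisotropic reduction carries the right symplectic structure by general principles. Finally, your alternative route — deducing Theorem \ref{thm*} from Theorem \ref{mthm} by passing to associated graded algebras — is circular in the logic of this paper: Theorem \ref{mthm} is itself proved by an $\hb$-deformation argument (Proposition \ref{vprop} and Lemma \ref{k-seq}) whose input at $\hb=0$ is precisely the quasi-classical statement, i.e.\ Theorem \ref{thm*} in the guise of Proposition \ref{vthm-cl}.
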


The natural Weyl group action  on $\T^*T$ induces an action on
$\C[\T^*T \times_{\t^*/ W}\XX]$.
The subalgebra
$\C[\T ^* T\times_{\t^*/ W}\XX]^{\C[\fc]}$
is $W$-stable.
Transporting the $W$-action 
via the isomorphism of Theorem \ref{thm*} yields a  $W$-action on $\C[\YY]$
by Poisson algebra automorphisms. 
Thanks to \cite{GR}, the algebra
 $\C[\T^*(G/U)]$ is   finitely generated. Therefore,  $\YY\aff:=\Spec\C[\T^*(G/U)]$, 
 the affinization of
$\T^*(G/U)$, is an affine variety that  comes equipped with 
a  Poisson structure and  a Hamiltonian $G\times T$-action. From Theorem \ref{thm*},  
we deduce

\begin{cor}\label{Wcor} There exists a   $W$-action on $(\T^*(G/U))\aff$
with the following properties:

\vi The actions of $W$ and $T$ combine together to give a
$W\ltimes T$-action on  $(\T^*(G/U))\aff$ such that 
the moment map $(\T^*(G/U))\aff\to\t^*$, for  the  $T$-action, is a $W$-equivariant morphism. 

\vii The $G$-action commutes with the $W\ltimes T$-action.
\end{cor}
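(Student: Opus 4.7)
The plan is to deduce this essentially formally from Theorem~\ref{thm*} via transport of structure, together with the finite-generation result of \cite{GR} that ensures $\YY\aff = \Spec\C[\T^*(G/U)]$ is an affine variety of finite type on which the transported action integrates.

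First, I equip $\T^*T\times \XX$ with the $W$-action given by the natural action of $W$ on $T$ (extended symplectically to $\T^*T$) on the first factor and the trivial action on the second. Because the composite $p_1: \T^*T\xrightarrow{\mu_T}\t^*\twoheadrightarrow \t^*/W$ is $W$-invariant (its target is the $W$-quotient), the fiber product $\T^*T\times_{\t^*/W}\XX$ is $W$-stable. Further, for each $z\in\C[\t^*/W]$ the function $\wt z$ on $\T^*T\times\XX$ is $W$-invariant, since $z$ is by hypothesis Weyl-invariant on $\t^*$. Because $W$ acts on $\T^*T$ by Poisson (i.e.\ symplectic) automorphisms and trivially on $\XX$, the Poisson bracket on $\C[\T^*T\times\XX]$ is $W$-equivariant, so the defining condition $\{\wt z, f\}=0$ of the invariant subalgebra is $W$-stable. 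Thus $W$ acts on $\C[\T^*T\times_{\t^*/W}\XX]^{\C[\t^*/W]}$ by Poisson algebra automorphisms, and Theorem~\ref{thm*} transports this to a $W$-action on $\C[\YY]$ by Poisson automorphisms. Taking $\Spec$ yields the sought action on $\YY\aff$.

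To verify (i), I observe that $\mu_T: \T^*T\to \t^*$ is manifestly $W$-equivariant, and Theorem~\ref{thm*} identifies $\mu_r^*$ with $f\mapsto \mu_T^*(f)\otimes 1$; hence $\mu_r: \YY\aff\to \t^*$ is $W$-equivariant for the constructed action. The $T$-action on $\YY\aff$ is generated infinitesimally by the Hamiltonian vector fields of $\mu_r^*(\t^*)$; the $W$-equivariance of $\mu_r^*$ combined with the fact that $W$ acts by Poisson automorphisms shows that $W$ and $T$ combine to a $W\ltimes T$-action. For (ii), under the isomorphism of Theorem~\ref{thm*} the $G$-action on $\YY$ corresponds to the natural $G$-action on the $\XX$ factor alone (by the $G$-equivariance assertion of that theorem), which manifestly commutes with the $W$-action on the $\T^*T$ factor; combined with the given commutativity of $G$ and $T$, this yields the full $G\times(W\ltimes T)$-action on $\YY\aff$.

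The substantive content has already been absorbed into Theorem~\ref{thm*}; the remaining work is bookkeeping, verifying that each transport step---the $W$-stability of the fiber product, of the Poisson-normalizer subalgebra, and the intertwining with the $G$- and $T$-actions---is genuinely compatible with the $W$-action just constructed. The only point requiring care is that the $W$-action on the Poisson-normalizer subalgebra preserves the normalizer condition, which reduces to the $W$-invariance of $\wt z$ and the $W$-equivariance of the Poisson bracket, both of which were addressed above.
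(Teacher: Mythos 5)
Your proposal is correct and follows essentially the same route as the paper: the paper derives Corollary \ref{Wcor} in \S\ref{ss1.2} exactly by letting $W$ act on the $\T^*T$ factor, checking that $\C[\T^*T\times_{\t^*/W}\XX]^{\C[\t^*/W]}$ is $W$-stable, transporting through Theorem \ref{thm*}, and invoking finite generation of $\C[\T^*(G/U)]$ from \cite{GR}. The formal proof in \S\ref{5} phrases the same transport through the torsor isomorphism $\kap$ of Theorem \ref{tors-thm} (and additionally compares with the action of \cite[Proposition 5.5.1]{GR}, which the statement itself does not require), but your verification of properties (i) and (ii) via the $G\times T$-equivariance and the moment-map compatibilities asserted in Theorem \ref{thm*} is exactly the intended bookkeeping.
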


The   $W$-action on $(\T^*(G/U))\aff$
 is a quasi-classical counterpart of
the Gelfand-Graev action. A different construction of
the same  $W$-action on $(\T^*(G/U))\aff$ was given earlier in
\cite[Proposition ~5.5.1]{GR}.

\begin{rems} \vi The $W$-action on $(\T^*(G/U))\aff$
does not commute with the $\C^\times$-action that comes from the dilation action
on the fibers of $\T^*(G/U)\to G/U$.

\vii Analogues of Theorems \ref{mthm}  and \ref{thm*} are likely to hold for any connected semisimple group
$G$, not necessarily of adjoint type. The case of a simply connected group will be  discussed
in \S\ref{sc_sec}.
\end{rems}

The geometry of $\XX$ is, in a way,
much simpler than that of $\YY$. Indeed, the variety
$\XX$ is affine,  the  $G$-action on $\XX$ is free,
and the  map $\mu^\psi$ is a smooth morphism
with image $\g_r$, the set of regular (not necessarily semisimple) elements
of $\g^*$. On the other hand, the variety
  $\T^*(G/U)$ is only quasi-affine, the
$G$-action on  $\T^*(G/U)$ is free only generically, the
 map $\mu: \T^*(G/U)\to \g^*$ is not flat and its image
is the whole of $\g$.

The Poisson variety $\YY\aff$ is, we believe, quite interesting and it
deserves further study. 

In the special case of  type $A$, the variety $\YY\aff$ has a quiver construction,
\cite{DKS}, 
analogous to Nakajima's construction of $T^*(G/B)$, \cite{Na}.
In this case, a construction of the Gelfand-Graev action
which is similar to the construction, due to Lusztig and Nakajima,
of Weyl group actions on quiver varieties was found in \cite{W}.

We propose the following

\begin{conj}\label{Be} For any semisimple group $G$,
  the variety $\YY\aff$ has symplectic singularities, \cite{Be}; in particular,  $\YY\aff$
  is a union of finitely many symplectic leaves,  \cite{Kal}.
\end{conj}

Using the  quiver interpretation 
it was shown
in \cite{J} that Conjecture \ref{Be} holds in type $A$.

In the case $G=SL_3$
(which is not  a group of adjoint type)
 the variety $(\T^*(G/U))\aff$ is well understood, see  \cite{J}.
 Specifically, it
 is isomorphic to the closure, $\bar O$, of the minimal nilpotent
orbit $O$ in $\mathfrak{so}_8$.
The Gelfand-Graev action of the Weyl group $W=S_3$
on $(\T^*(G/U))\aff\cong \bar O$ comes from {\em triality},
and the Poisson structure on $(\T^*(G/U))\aff$ agrees 
 with the Kirillov-Kostant
 symplectic structure on $O$.
 Since $\bar O=O\sqcup \{0\}$,
there are  two symplectic leaves:
$\{0\}$ and $O$.
It is known that  $\bar O$ 
does not have a symplectic resolution, \cite{Fu}.

\subsection{Interpretation via the affine Grassmannian}
\label{satake-intro}
Let $\KK=\C(\!(z)\!)$, resp. $\OO=\C[[z]]$.
Let
$G^\vee$ be the Langlands dual group of $G$ 
and  $\Gr=G^\vee(\KK)/G^\vee(\OO)$  the affine Grassmannian.
Since $G$ is of adjoint type,
the group $G^\vee$ is simply-connected, so $\Gr$ is connected.
The group $G^\vee$ comes equipped with
 the canonical maximal torus $T^\vee$. Let
 $\bt=\Gm\times T^\vee$. 
The group $\bt$ acts on $\Gr$, where the factor $\Gm$ acts by loop
rotation. Let $\Gr^\bt$ be the $\bt$-fixed point set.
There is a canonical bijection  $\Gr^\bt\cong \BX$, where $\BX$ is the root lattice of $G$.
We write $\pt_\la$ for the $\bt$-fixed point corresponding to $\la\in\BX $,  and
$i_\la:\ \{\pt_\la\}\into \Gr$ for the  imbedding. Let $\C_X$ denote a constant sheaf on a space $X$.
The restriction map
$i_\la^*: \hd_\bt(\Gr)=\hd_\bt(\Gr, \C_\Gr)\to\hd_\bt(\pt_\la)$
is a surjective algebra homomorphism. Let $\jj_\la$ be its kernel,
an ideal of $ \hd_\bt(\Gr)$.

The equivariant cohomology $\hd_\bt(\cf)=\hd_\bt(\Gr,\cf)$,
of  a $\bt$-equivariant constructible complex $\cf$  on $\Gr$,
has the natural structure of a $\Z$-graded
 $\hd_\bt(\Gr)$-module.
For such an $\cf$, we put
$\hd_\bt(\cf)^{\jj_\la}:=$ ${\{h\in \hd_\bt(\cf)
\mid jh=0,\ \forall j\in\jj_\la\}}$.

Let $\sat$ be the Satake category.
Any object of that category is known to be a finite direct sum of the
IC-sheaves  $\IC_\la:=\IC(\ogr_\la)$, where $\ogr_\la$ denotes 
 the closure of the  $\GO$-orbit of $\pt_\la$.
Furthermore, it is known that 
objects of $\sat$ come equipped with a canonical
$\gm\ltimes \GO$-equivariant, in particular
$\bt$-equivariant,  structure.

From  Theorem \ref{vthm}
of the present paper combined with some results from
\cite{BF} and \cite{GR}, we will show in section \ref{sec6}
that  the following theorem
is essentially equivalent, via geometric Satake, to 
a combination of Theorems \ref{mthm} and \ref{thm*}.

\begin{thm}\label{grthm} For any $\cf\in\sat$ and $\la\in
\BX $,  the  adjunction  $(i_\la)_!i_\la^!\cf\to\cf$ induces  an isomorphism
\[\hd_\bt(i_\la^!\cf)\ \iso\ \hd_\bt(\cf)^{\jj_\la}.
\]
\end{thm}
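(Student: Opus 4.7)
Theorem \ref{grthm} will be deduced from the combination of Theorems \ref{mthm} and \ref{thm*} via the Bezrukavnikov-Finkelberg description of the Satake category \cite{BF}, as refined by \cite{GR}, which transports the geometry of $\Gr$ into the Hamiltonian-reduction framework developed above.

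First, one notes that the factorization of the adjunction through $\hd_\bt(\cf)^{\J_\la}$ is automatic: since $i_\la^!\cf$ is supported at the single $\bt$-fixed point $\pt_\la$, its equivariant cohomology is naturally a module over $\hd_\bt(\pt_\la)=\hd_\bt(\Gr)/\J_\la$, so $\J_\la$ annihilates the image of the adjunction in $\hd_\bt(\cf)$. The content of the theorem is therefore the bijectivity of the induced map
\[ \hd_\bt(i_\la^!\cf) \ \to\ \hd_\bt(\cf)^{\J_\la}. \]

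The heart of the argument is to invoke geometric Satake together with \cite{BF}, \cite{GR} to transfer the functor $\cf \mapsto \hd_\bt(\cf)$ on $\sat$ to an algebraically defined functor landing in graded modules over $R := \hd_\bt(\Gr)$. Under this identification, $\Spec R$ is (a form of) $\T^*T$, the $\bt$-fixed points $\pt_\la$ correspond to the lattice points $\la\in\BX$ on the zero section $T\sset\T^*T$, and $\J_\la$ becomes the maximal ideal of that point. Assembling all Satake cohomologies into an ind-object, one identifies it via \cite{GR} with the Poisson algebra $\C[\YY\aff]$ of Theorem \ref{thm*}; by that theorem the $R$-module structure is nothing but the action of the $\C[\T^*T]$-factor in the presentation $\C[\YY\aff]=\C[\T^*T\times_{\t^*/W}\XX]^{\C[\t^*/W]}$, and Theorem \ref{mthm} supplies the quantum lift needed to accommodate the loop-rotation $\C^\times\sset\bt$. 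Consequently $\hd_\bt(\cf)^{\J_\la}$ on the algebraic side is visibly $\Hom_R(R/\J_\la,\,-)$ applied to the corresponding $R$-module.

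It remains to match the sheaf-theoretic costalk functor $\cf\mapsto \hd_\bt(i_\la^!\cf)$ with this algebraic functor. Equivariant formality of objects of $\sat$ (which makes each $\hd_\bt(\IC_\mu)$ free over $\hd_\bt(\pt)$), together with the precise form of the Bezrukavnikov-Finkelberg slice, should imply that $i_\la^!$ corresponds under the equivalence precisely to $\Hom_R(R/\J_\la,\,-)$ on the transported $R$-modules. Granted this compatibility, bijectivity of the displayed map is an immediate algebraic consequence of Theorems \ref{mthm}--\ref{thm*}. I expect this last step — matching the topological costalk $i_\la^!$ with the coherent-sheaf functor $\Hom_R(R/\J_\la,\,-)$ under the \cite{BF}--\cite{GR} equivalence — to be the principal obstacle, since it is the point where the six-functor formalism on $\Gr$ has to be traced carefully through the BF slice.
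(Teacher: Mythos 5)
Your outline identifies the right general strategy (transport everything through the Bezrukavnikov--Finkelberg equivalence and feed in the main theorems), and the easy half — that the adjunction factors through $\hd_\bt(\cf)^{\J_\la}$ because $i_\la^!\cf$ is supported at $\pt_\la$ — is correct and is exactly how the paper starts. But the proposal has a genuine gap: the step you yourself flag as ``the principal obstacle,'' namely matching the costalk functor $\hd_\bt(i_\la^!(-))$ with $\Hom_{\hd_\bt(\Gr)}(\hd_\bt(\Gr)/\J_\la,\,\hd_\bt(-))$, \emph{is} the content of the theorem, and you leave it unproven. The paper closes this gap with two concrete inputs that your argument never isolates. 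First, the algebraic computation of the $\J_\mu$-invariants: Theorem \ref{vthm} (equivalently Proposition \ref{vprop}), which shows that inside $\Phi(V)=(V\o\verh)/\uph\cong\hd_\bt(\Gr,\BS(V))$ the subspace $\Phi(V)^{\J_\mu}$ equals the image of the singular vectors $(V\o\verh)^{\u,\mu}$. This is the precise form in which Theorems \ref{mthm} and \ref{thm*} enter; citing them directly, as you do, does not produce this statement. Second, the geometric identification of the costalk: via the Mirkovi\v{c}--Vilonen filtration of \cite{BF} and \cite[Theorem 2.2.4]{GR}, the image of $\hd_\bt(i_\mu^!\BS(V))$ in the associated graded piece $\gr_\mu\Phi(V)$ coincides with the image of $(V\o\verh)^{\u,\mu}$ under $\fii_\mu\alg$; the commutativity of the triangle in diagram \eqref{geom-diag} then lifts this equality from the associated graded back to $\Phi(V)$, forcing the inclusion $(i_\mu)_!\hd_\bt(i_\mu^!\BS(V))\sset\Phi(V)^{\J_\mu}$ to be an equality. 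Equivariant formality alone (freeness of $\hd_\bt(\BS(V))$ over $\hd_\bt(\pt)$) does not substitute for either input.

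A secondary but real error: your geometric picture of $R=\hd_\bt(\Gr)$ is wrong. By \cite{BF}, $\Spec R\cong\nt=\t^*\times_{\fc}\nf$, the base change to $\t^*$ of the deformation to the normal cone of the diagonal in $\fc\times\fc$ — not a form of $\T^*T$ — and $\J_\la$ is not a maximal ideal: the quotient $R/\J_\la\cong\hd_\bt(\pt_\la)\cong\C[\t^*\times\BA^1]$ is a polynomial ring, so $\J_\la$ cuts out a section of $\nt\to\t^*\times\BA^1$ rather than a point. The identification of $\J_\la$-invariants therefore cannot be reduced to a statement about stalks at closed points, which is implicitly what your ``maximal ideal of that point'' framing suggests.
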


\begin{rems}\label{obstacle}
\vi 
Let $\mu,\la\in\BX$ be such that $\pt_\la\in \ogr_\mu$,
let $i_{\la,\mu}: \{\pt_\la\}\into
\ogr_\mu$ be the imbedding, and  put $\op{C}_{\la,!}:=(i_{\la,\mu})_!\C_{\pt_\la}$.
Write  $\Ext^\hdot_\bt(-,-)$  for  Ext-groups in the $\bt$-equivariant constructible
derived category of $\ogr_\mu$. 
For any $\cf$ in that category, there is a  canonical isomorphism
$\hd_\bt(i_{\la,\mu}^!\cf)=\Ext^\idot_\bt(\op{C}_{\la,!},\, \cf)$.
On the other hand, since $\hd_\bt(\op{C}_{\la,!})$ and
$\hd_\bt(\Gr)/\jj_\la$ are isomorphic $\hd_\bt(\Gr)$-modules, we find
$\hd_\bt(\cf)^{\jj_\la}=\Hom_{H_\bt^\hdot(\Gr)}(\hd_\bt(\Gr)/\jj_\la,\,\hd_\bt(\cf))=
\Hom_{H_\bt^\hdot(\Gr)}(\hd_\bt(\op{C}_{\la,!}),\,\hd_\bt(\cf))$.
Thus,
 the isomorphism of the theorem 
amounts to the claim that, for all pairs $\la,\mu$,
as above, the functor $\hd_\bt(-)$ induces  an isomorphism 
\beq{griso}
\Ext^\idot_\bt(\op{C}_{\la,!},\ \IC_\mu)\ \iso\
\Hom_{H_\bt^\hdot(\Gr)}(\hd_\bt(\op{C}_{\la,!}),\ \hd_\bt(\IC_\mu)).
\eeq

 \vii A non-equivariant analogue of the above isomorphism, hence
 of Theorem \ref{grthm}, is  false, in  general. Indeed,
the groups  $\hd(\op{C}_{\la,!})$ and $\hd(\op{C}_{\nu,!})$ are 
isomorphic $\hd(\Gr)$-modules, for any pair of points
$\pt_\la,\pt_\nu\in\ogr_\mu$. On the other hand,
 the  $\hd(\Gr)$-modules $\hd(i_\la^!\IC_\mu)$ and
$\hd(i_\nu^!\IC_\mu)$ 
are clearly not necessarily
 isomorphic, in  general.

\viii Isomorphism \eqref{griso}  is reminiscent of a result from \cite{Gi1}.
Specifically,  according to \cite{Gi1} there is
 an analogue of  isomorphism \eqref{griso}
where the sheaf $\op{C}_{\la,!}$ is replaced by $\IC_\la$.
However, unlike the main result of  \cite{Gi1} which holds in a much more
general setting,  isomorphism 
\eqref{griso}  seems to be
a  special feature of the Satake category.
It is unlikely that an analogue of \eqref{griso}
holds for IC-sheaves of   Schubert varieties
in an arbitrary (finite dimensional, say)  partial flag variety.

\iv In the setting of Theorem \ref{grthm}, there is a natural map $\hd_\bt(\cf)/\jj_\la\cdot\hd_\bt(\cf)
\to \hd_\bt(i_\la^*\cf)$, induced by the adjunction $\cf\to (i_\la)_*i_\la^*\cf$.
This map is not necessarily injective, in general.
\end{rems}

\subsection{Layout of the paper} 
In section \ref{inv} we review various (well-known) constructions which allow,
in particular, to define the $G$-varieties $G/U$ and $G/\bU$ in a way that  does not involve a
choice of unipotent subgroups $U,\bU$.
In section 3 we introduce a certain torsor of the group scheme of regular centralizers.
The same torsor has appeared in a less explicit way in the work of Donagi and Gaitsgory \cite{DG}.
In section \ref{4} we  study  the {\em Miura variety}, a  smooth  $G$-stable Lagrangian
correspondence in $\T^*(G/U )\times \XX$.
This variety comes equipped with commuting actions of the Weyl group and 
 the  group scheme of regular centralizers. The Miura variety is the main geometric
ingredient of the proof of 
Theorem \ref{thm*} given in section \ref{5}.

The goal of section \ref{6} is to construct a map $\kap$,  cf. \eqref{kap}, between the algebras in
the LHS and RHS
of \eqref{zz}. Theorem \ref{kaph-thm}, which is a more precise version of Theorem
\ref{mthm}, states that the constructed map is an algebra isomorphism.
 A key role in
the construction of $\kap$
 is played by a certain $\dd(G/U)\o\dd(T)\o\dd^\psi(G/\bU)$-module which we call the  {\em Miura bimodule}, see \eqref{miu1}.
The  Miura bimodule is a slightly refined version of the  quotient $M=\dd(G)\big/(\u\dd(G)+\dd(G)\up)$.
The latter quotient has the natural structure of a $(\dd(G/U),\,\dd^\psi(G/\bU))$-bimodule.
The bimodule $M$ may be viewed as a quantization of the  Miura variety.

The proofs of our main results are completed
in section \ref{7}. In \S\ref{reform}, we show that Theorem \ref{kaph-thm}
is equivalent  to a  result (Theorem \ref{vthm}) concerning singular vectors in the universal Verma module.
This result may be of independent interest. 
Theorem \ref{vthm} is proved in \S\ref{vpf} by reduction to the commutative case, i.e.
to Theorem   \ref{thm*}, via a deformation argument.
The proof of Theorem \ref{grthm} is given in \S\ref{sec6}.

\subsection{Acknowledgements} 
We are grateful to Gus Lonergan for useful comments and the referee for an extremely careful reading
of the paper.
The first author is supported in part by the NSF grant  DMS-1602111.
The second  author is partially supported by ~NSERC.
The project has received funding from ERC under grant agreement 669655.

\section{Geometry of $G/U$}

\subsection{}\label{inv}
Let $G$ be a connected  semisimple group with trivial center
and $\g=\Lie G$.
Let $\bb$ be the flag variety of Borel subgroups  $B\sset G$, equivalently, Borel subalgebras $\b\sset\g$.
The tori $B/[B,B]$ associated with various
Borel subgroups $B\in\bb$ are canonically
isomorphic. Let $T=B/[B,B]$
be this universal Cartan torus  for $G$. Write
$\t=\Lie T$,  let $\BX=\Hom(T,\C^\times) \sset \t^*$ be the root lattice of $\g$, and
$W$ the universal Weyl group.
Let $I$ be the set of vertices of the Dynkin diagram of $G$.

Given a Borel subalgebra $\b$, let
  $\u(\b):=[\b,\b]$ be the nilradical of $\b$, and $\fa(\b):=\u(\b)/[\u(\b),\u(\b)]$. The weights of the $T$-action on $\fa(\b)$
are called simple roots. Write  $\al_i$
 for the  simple root
associated with a vertex $i\in I$.
 For $\la,\mu\in\BX $, we write $\la\leq\mu$
iff $\mu-\la=\sum_{i\in I}\, n_i\al_i$ for some nonnegative integers $n_i$.
The group $G$ being  adjoint, the weights
of any finite dimensional representation $V$ of $G$ are contained in
$\BX$.
Associated with $\b\in\bb$, there is a canonical
$\b$-stable filtration $\g^{\geq\mu,\b},\ \mu\in\BX $, on $\g$, such that 
$\gr^{\mu,\b} \g:=\g^{\geq\mu,\b}/\g^{>\mu,\b}$ is the
$\mu$-weight space for the natural  action of 
the universal Cartan algebra $\t=\b/\u(\b)$. In particular, we have
$\g^{\geq0,\b}=\b$, resp.
$\gr^{0,\b}\g=\t$. For each $i\in I$,  the space
$\gr^{\al_i,\b}\g$, resp. $\gr^{-\al_i,\b}\g$, is  1-dimensional  and 
we put $\ooo^i(\b)=(\gr^{\al_i,\b}\g)\sminus\{0\}$, resp. $\ooo^i_-(\b)=(\gr^{-\al_i,\b}\g)\sminus\{0\}$.
Let $\ooo(\b)=\prod_i\ \ooo^i(\b)$, resp. $\ooo_-(\b)=\prod_i\ \ooo^i_-(\b)$. 
The action of $T$ makes $\ooo(\b)$, resp. $\ooo_-(\b)$, a $T$-torsor.

We put $\fd(\b)=\sum_{i\in I}\ \g^{\geq -\al_i,\b}$.
The map $(s_i)_{i\in I}\mto \sum_i\ s_i$, provides a
$T$-equivariant isomorphism $\oplus_i\,\gr^{\al_i,\b}\g\iso \fa(\b)$, resp.
$\oplus_i\, \gr^{-\al_i,\b}\g\iso \fd(\b)/\b$.
This gives a canonical identification of the $T$-torsor $\bbo(\b)=\prod_i\ \bbo^i(\b)$, resp. 
$\bco(\b)=\prod_i\ \bco^i(\b)$,  with a unique open dense  $T$-orbit in $\fa(\b)$, resp. $\fd(\b)/\b$.
We will abuse notation and write  $x+\b$, resp. 
$\bco(\b)+\b$, for the preimage
of $x\in  \fd(\b)/\b$, resp. the open dense  $T$-orbit in $\fd(\b)/\b$,
under the natural projection $\fd(\b)\to \fd(\b)/\b$.

The family of torsors $\ooo(\b)$, resp. $\ooo_-(\b)$,  for varying  $\b\in\bb$ gives a variety
\beq{tbb}
\wt\bb:=\{(\b, s)\ \mid  \b\in \bb, \en s\in\bbo(\b)\},\resp
\tbm:=\{(\b, s)\ \mid  \b\in \bb, \en s\in\bbo_-(\b)\}.
\eeq
The group $G$ acts on $\bb$, resp.  $\wt\bb$ and $\tbm$,
 in a natural way. By construction,  the
first projection  $(\b, s) \mto \b$ is a $G$-equivariant $T$-torsor on $\bb$.

 
\subsection{}\label{obb}
We say that  a pair  $\b,\bar{\b}\in \bb$,  of Borel subalgebras, is  in `opposite position' if
$\b\cap\bar{\b}$ is a Cartan subalgebra of $\g$. In that case, one has
a triangular decomposition $\g=\u(\b)\oplus(\b\cap\bar{\b})\oplus \u(\bar{\b})$ and
a diagram
\[\t=\bar{\b}/\u(\bar{\b})\  \twoheadleftarrow\  \bar{\b}\  \hookleftarrow \ \bar{\b}\cap\b\
\into\  \b \ \onto\  \b/\u(\b)  =\t.\]
It follows from definitions that the resulting isomorphism between the
leftmost and rightmost
copy of  $\t$, in the diagram, is given by the map
$t\mto w_0(t)$, where $w_0$ is the longest element of $W$.
The assignment $\al_i\mto -w_0(\al_i)$ gives a permutation $i\mto i'$, of
the set $I$.
For every $i\in I$, one has a diagram
\[
 \gr^{-\al_i,\bar{\b}}\g\  \twoheadleftarrow
\ \g^{\geq -\al_i,\bar{\b}}\    \hookleftarrow  \ 
 \g^{\geq -\al_i,\bar{\b}}\cap\g^{\geq \al_{i'},\b}\
\into
\g^{\geq \al_{i'},\b}\ \onto\ \gr^{\al_{i'},\b}\g.
\]
The compositions on the left and on the right
give 
canonical  isomorphisms
\[
\gr^{-\al_i,\bar{\b}}\g\ccong {\g^{\geq \al_{i'},\b}\cap \g^{\geq -\al_i,\bar{\b}}}\ccong\gr^{\al_{i'},\b}\g,
\qquad
\fd(\bar{\b})/\bar{\b}\ \xleftarrow[\cong]{\kap_-}\ \fd(\bar{\b})\cap \u(\b )\
\xrightarrow[\cong]{\kap_+}\  \fa(\b ).
\]
Thus, the map $\kap_+\ccirc\kap_-\inv$ yields an isomorphism
\beq{bbo-bco}
\fd(\bar{\b})/\bar{\b}\ \iso\
\fa(\b ),\quad\text{resp.}\quad
\kap_{\bar{\b},\b}:\ \bco(\bar{\b})\ \iso\
\bbo(\b),
\eeq
to be denoted $\kap_{\bar{\b},\b}$, such that 
\beq{Tkap}\kap_{\bar{\b},\b}(ts)=w_0(t)\inv\kap_{\bar{\b},\b}(s),\quad\forall\ t\in T, s\in \bco(\bar{\b}).
\eeq

Let $G_\De\sset G\times G$, resp. $T_\De\sset T\times T$, be the diagonal.
The set $\Om$ formed by the pairs of Borel subalgebras in opposite position is a unique open dense $G_\De$-orbit 
 in $\bb\times\bb$. 
Let $\wt\Om$ be the preimage of  $\Om$  under the projection 
$\tbm\times \tb\to\bb\times\bb$. 
We also consider  a subvariety of  $\wt\Om$ defined
as follows:
\beq{Xi}
\Xi\ :=\ \{(\bar{\b},\bar{x},\b,x)\in \tbm \times\tb\mid
(\bar{\b}, \b )\in \Om,\ \kap_{\bar{\b},\b }(\bar{x})=x \}.
\eeq
The group $G\times G$, resp. $T\times T$,  acts on $\tbm\times \tb$ on the left, resp. right.
The variety $\wt\Om$  is $T\times T\times G_\De$-stable,
resp. $\Xi$   is $T_\De\times G_\De$-stable.

Fix a pair $B,\bar{B}$ of Borel subgroups  in opposite position and an element
$s\in \bco(\bar{\b})$. The stabilizer of the point $(\b,\kap_{\bar{\b},\b }(s))\in \tb$, resp.
$(\bar{\b}, s)\in \tbm$, equals  $U=[B,B]$,   resp. $\bar{U}=[\bar{B},\bar{B}]$, 
 the maximal
unipotent subgroup of $B$, resp. $\bar B$.
Thus, the $G$-action gives a $G$-equivariant isomorphism
 $G/U\iso\tb$, resp. $G/\bar{U}\iso \tbm$. With this identification, we have
\beq{xi}
\Xi=\{(g\bar{U}/\bar{U} , gU/U)\in G/\bar{U}\times G/U,\ g\in G\}.
\eeq
The action of $T_\Delta$ on
$\Xi$  is given by the formula
$t:\  (g\bar U /\bar U , gU /U)\mto (gt\bar{U}/\bar{U},\,gtU /U)$.

One has a natural map
$p_\Om: \Xi\to \Om$, resp.
$p_-: \Xi\to \tbm$ and  $p:\Xi\to\tb$, given by
 $p_\Om(\bar{\b},\bar{x},\b,x)=(\bar{\b},\b)$,  resp.
$p_-(\bar{\b},\bar{x},\b,x)=(\bar{\b},\bar{x})$ and  $p(\bar{\b},\bar{x},\b,x)=(\b,x)$.

\begin{lem}\label{closed} \vi 
The map 
$T\times \Xi\iso \wt\Om,\ \big(t,\,(\bar{\b},\bar{x},\b,x)\big)\mto (\bar{\b},\bar{x},\b,xt)$,
is  a $T\times G_\De$-equivariant isomorphism.
Furthermore, the variety $\Xi$ is 
closed in  $\tbm \times \tb$. 

\vii 
 The map $p_\Om: \Xi\to \Om$ is a $G_\De$-equivariant $T_\De$-torsor;
moreover,  $\Xi$ is a $G_\De$-torsor.

\viii  Each of the two maps 
below is a  $G_\De$-equivariant isomorphism:
\beq{wtx}
\xymatrix{
\tbm\times_{\bb}\bbm\ &&\ 
\oom\ \ar[ll]_<>(0.5){p_-\times p_\Om}
\ar[rr]^<>(0.5){p_\Om\times p} &&\
\bbm\times_{\bb}\tb .}
\eeq
\end{lem}
\begin{proof} All statements except for the second statement in (i)  are immediate from \eqref{Xi}.
To prove  the second statement in (i), recall that the variety $\tbm$ is quasi-affine, thanks to
 the Pl\"ucker imbedding.
The orbits of a unipotent group action on an affine variety are known to be closed, cf. eg. \cite{Di}, \S 11.2.4. 
It follows that any $U$-orbit in
$\tbm$ is a closed subset of the affine closure of $\tbm$,
hence this orbit is closed in $\tbm$.
The fiber of the map $p$ over $1U/U\in\tb$ is a 
single $U$-orbit in $\tbm$.  We deduce that
the $G_\Delta$-orbit of the point $(1\bU/\bU,1U/U)$  is closed
in $G/\bU\times G/U$, and we are done by ~\eqref{xi}.
\end{proof}

\section{A torsor on the set of regular elements}

\subsection{} \label{fa}

The quotient $\fe(\b)=\b/[\u(\b),\u(\b)]$ has the natural structure of a Lie algebra such that
$\fa(\b)=\u(\b)/[\u(\b),\u(\b)]$ is an abelian ideal of $\fe(\b)$, and we have $\fe(\b)/\fa(\b)=\t$.
If confusion is unlikely, we will use simplified notation
$\fd=\fd(\b),\fa=\fa(\b)$, etc.
We use the Killing  form  to identify
$\g$ with $\g^*$.
We obtain the following  isomorphisms:
\beq{killing}
\xymatrix{
\u\ \ar@{=}[d]\ar@{^{(}->}[r] & \ \b\ \ar@{=}[d]\ar@{^{(}->} [r] 
&\  \fd\ \ar@{=}[d]\ar@{->>}[r]  &\  \fd/\u \ \ar@{=}[d]\ar@{->>}[r]  &\  \fd/\b\ \ar@{^{(}->}[r] \ar@{=}[d]
& \ \g/\b\ar@{=}[d]&\  \g/\u\ \ar@{->>}[l]\ar@{=}[d] \\
\b^{\perp}\ \ar@{^{(}->}[r] 
 &\  \u^{\perp}\ \ar@{^{(}->}[r]  &\  [\u, \u]^{\perp}\ \ar@{->>}[r] &\  \fe^*\ \ar@{->>}[r] 
&\ \fa^*\ \ar@{^{(}->}[r] &\ \u^*\ &\ \b^*.\ \ar@{->>}[l] .
}
\eeq

Recall the notation $\fc=\g\dsl G:=\Spec(\C[\g]^G)$.
We consider the following diagram:
\beq{springer}
\xymatrix{
\g\ &&\ \tg\ :=\ \{(\b, x)\in \bb\times \g \mid
x\in\b\}\ \ar[ll]_<>(0.5){x\ \leftarrowtail\ (\b,x):\ \pi} \ar[rrr]^<>(0.5){\ \nu:\ (\b, x)\ \rightarrowtail\
 x\mod \u(\b)\ }
&&&\ \t.
}
\eeq
The  map $\pi$ is a  
projective morphism,   the Grothendieck-Springer
morphism. The morphism $\nu$ is  smooth and the map $\pi\times\nu$  factors through
$\g\times_\fc\t$. The above maps are
 $G$-equivariant where $G$ acts  diagonally on $\tg$ and trivially on $\t$.
The first projection $(\b,x)\mto \b$ makes $\tg$ a $G$-equivariant
 vector bundle on $\bb$ with fiber $\b$.

The stabilizer of an element
$(\b,s)\in \tb$ is the maximal unipotent subgroup associated with the
Lie algebra $\u(\b)$.  It  follows that one has
\[\T^*\tb\ =\ \{(\b, s,x)\mid \b\in\bb,\,s\in \bbo(\b),\, x\in \u(\b)^\perp\}.
\]
The $G\times T$-action on $\tb$ induces a Hamiltonian 
 $G\times T$-action on $\T^*\tb$ with moment map
${\mu_{\T^*\tb}}\times{\nu_{\T^*\tb}}:\ \T^*\tb\to\g^*\times\t^*$.
A choice of base point $(\b,s_0)\in \tb$
 gives a $G$-equivariant isomorphism
\beq{ttb}  G\times_U\u(\b)^\perp \iso  \T^*\tb,\quad (g,x)\mto \big(\Ad g(\b), \Ad g(s_0), \Ad g(x)\big),
\eeq
where $B$ is the Borel subgroup with Lie algebra $\b$, resp. $U=[B,B]$ and $\Ad g(s_0)$ is an element of $\bbo(\Ad g(\b))$. 
Using the identifications \eqref{killing}, we get the following
$G$-equivariant  isomorphisms:
\beq{id-tg}
\xymatrix{
\T^*\tb\ \ar@/^1.5pc/[rrrr]|-{\ {\mu_{\T^*\tb}}\times{\nu_{\T^*\tb}}\ } 
\ar@{=}[d]\ar@{->>}[rr]&&\  (\T^*\tb)/T \ 
\ar@{=}[d]\ar@{->>}[rr] &&\  
\g^*\times_{\g^*/\!/G}\t^*\ \ar@{=}[d]\\
\tg\times_\bb\tb \ \ar@{->>}[rr] _<>(0.5){\tilde\varrho }^<>(0.5){T\text{-torsor}}&&\  \tg\ \ar@{->>}[rr]_<>(0.5){\pi\times\nu}
 &&\  
\g\times_\fc\t,
}
\eeq
where  $\tilde\varrho :\ \tg\times_\bb\tb \to\tg$ is  a pull-back of the $T$-torsor
$\varrho: \tb\to\bb$ via the vector bundle map $\tg\to\bb$.

From now on, we will identify the first, resp. second and third, object of the top
row of diagram \eqref{id-tg} with the first, resp. second and third, object of the
bottom
row of the diagram. Thus, we view $\tilde\varrho $ as a map
$\T^*\tb\to (\T^*\tb)/T=\tg$ so, we have ${\mu_{\T^*\tb}}=\pi\ccirc \tilde\varrho $, resp. ${\nu_{\T^*\tb}}=\nu\ccirc \tilde\varrho $.

\subsection{}\label{fZ-sec} We write  $G_x$ for 
 the stabilizer  of
an element $x\in\g$  under the   $\Ad G$-action and
let $\g_x=\Lie G_x$. 
We say that $x$ is regular if $\dim\g_x=\rk$. 
 Let $\g\reg$ be 
 the set of regular (not necessarily semisimple) elements of $\g$.
Let  $G$ act on itself by conjugation and on
 $\g_r \times G$ diagonally. We define
\[\cg:=\{(x,g)\in \g_r \times G \mid \Ad  g(x)=x\}.
\]

The fiber of $\cg$ over $x\in \g_r $
 equals $G_x$, which is a connected  abelian group, \cite{Ko1}, Proposition 14.
This makes $\cg\to\g_r$   a  smooth connected $G$-equivariant
abelian subgroup scheme of  the constant group scheme $\g_r \times G \to\g_r$,
cf. eg. \cite{Ngo}, \S 2.1.


We introduce the following incidence variety
\beq{xx}
\xx:=\{(\b, x)\in \bb\times \g\mid  x\in
\bbo_-(\b)+\b\}.
\eeq
We let $G$ act on $\xx$ diagonally. We have the following  $G$-equivariant maps
\beq{moment}
\xymatrix{
\tbm\ &&&\ \oox \ 
\ar[lll]_<>(0.5){(\b,\, x\mod \b) \ \leftarrowtail\  (\b,x):\ q}
\ar[rrr]^<>(0.5){\mum :\  (\b,x)\ \rightarrowtail \ 
x}&&&
\ \g.
}
\eeq
The map $q$ is a
fibration on $\tbm$ with affine-linear fibers $q\inv(\b,s)\cong s+\b$,
and  $\mum$ is a  projective morphism.

Let $Y$ be a $G$-variety  and  $f: Y\to \g_r$ a $G$-equivariant map.
For any $x\in\g_r$,  the fiber $f\inv(x)$  is $G_x$-stable.
The family of maps $G_x\times f\inv(x)\to f\inv(x),\ x\in\g_r$, yields
an action $\fZ\times_{\g_r}Y\to Y$; furthermore,
the action map  is $G$-equivariant.

\begin{lem}\label{torsor} \vi We have $\mum(\oox)=\g_r$, resp. $\vth(\g_r)=\fc$,
where $\vth: \g\to\fc=\g\dsl G$ is the adjoint quotient.

\vii The map $\mum: \oox\to \g_r$ is  a $\fZ$-torsor  in the \'etale topology,
in particular, $\mum$ is a smooth morphism.

\viii The composite $\oox\xrightarrow{\mum}\g_r\xrightarrow{\vth}\fc$ is a
trivial $G$-torsor on $\fc$.
\end{lem}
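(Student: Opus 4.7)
My plan is to reduce all three statements to Kostant's slice theorem. The starting point is the $G$-equivariant projection $\oox \to \bb,\ (\b,x) \mapsto \b$, which exhibits $\oox \cong G \times_{B_0} \oox_{\b_0}$ for any fixed Borel $B_0 \sset G$ with Lie algebra $\b_0$, where $\oox_{\b_0} = \{b + \sum_i c_i f_i : b \in \b_0,\, c_i \in \C^\times\}$ after pinning $\b_0$ by choosing simple negative root vectors $f_i \in \gr^{-\al_i,\b_0}\g$. I then analyze the $B_0$-action on $\oox_{\b_0}$, aiming to show it is free and transitive on each fiber of the composite $\oox_{\b_0} \to \g \to \fc$, so as to obtain a $B_0$-equivariant isomorphism $\oox_{\b_0} \cong B_0 \times \fc$.

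For the $T$-part, conjugation by $t \in T$ sends $b + \sum c_i f_i \mapsto \Ad t(b) + \sum \al_i(t)^{-1} c_i f_i$; since $G$ is adjoint, the map $T \to (\C^\times)^I,\ t \mapsto (\al_i(t))$, is an isomorphism, so the $T$-action on the coefficient tuple $(c_i)$ is simply transitive and $\oox_{\b_0} \cong T \times (f + \b_0)$ as $T$-varieties via $(t,y) \mapsto \Ad t(y)$, where $f := \sum_i f_i$. For the $U$-part, a weight computation shows the $U$-action on $\oox_{\b_0}$ preserves the slice $\{c_i = 1 \text{ for all } i\} = f + \b_0$: the $f_i$-components are minimal in the $\b_0$-weight order and cannot be altered by a unipotent (strictly positive-weight) perturbation. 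Kostant's slice theorem then provides an isomorphism $U \times (f + Z_\g(e)) \iso f + \b_0,\ (u,y) \mapsto \Ad u(y)$, exhibiting $f + Z_\g(e) = \kap(\fc)$ as a section of $\vth$ inside $f + \b_0$. Combining these, $B_0 = TU$ acts freely on $\oox_{\b_0}$ with orbits indexed by $\fc$, giving $\oox_{\b_0} \cong B_0 \times \fc$.

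Part (iii) then follows by inducing: $\oox \cong G \times_{B_0} (B_0 \times \fc) \cong G \times \fc$, a trivialized $G$-torsor on $\fc$. Part (i) follows because under this identification $\mum$ becomes $(g, c) \mapsto \Ad g \cdot \kap(c)$, so $\mum(\oox) = G \cdot \kap(\fc) = \g_r$ (the latter being another standard consequence of the Kostant slice), and hence $\vth(\g_r) = \fc$. Part (ii) follows because the fibers of $\g_r \to \fc$ are single $G$-orbits whose isotropy is exactly the group scheme $\fZ$ of regular centralizers; since $\oox \to \fc$ is a $G$-torsor factoring through $\g_r$, the intermediate map $\oox \to \g_r$ must be a $\fZ$-torsor. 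The step I expect to be the main obstacle is verifying the freeness of the $B_0$-action on $\oox_{\b_0}$, equivalently the transversality $B_0 \cap G_{\kap(c)} = \{e\}$ for every $c \in \fc$: the unipotent part follows directly from Kostant, but the toral part genuinely requires the adjoint hypothesis on $G$, and the two need to be patched by a careful weight analysis of solutions to $\Ad(tu)(\kap(c)) = \kap(c)$.
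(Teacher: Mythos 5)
Your proposal is correct and follows essentially the same route as the paper: the paper also fixes a principal $\sll$-triple, uses the $T$-torsor structure of $\bbo(\b_\bbf)$ (i.e.\ adjointness of $G$) to reduce $\bbo(\b_\bbf)+\b_\bbf$ to $B_\bbf\times_{U_\bbf}(\bbe+\b_\bbf)$, and then applies Kostant's isomorphism $U_\bbf\times\s\iso\bbe+\b_\bbf$ to obtain $\oox\cong G\times\s\cong G\times\fc$, from which (i), (iii) and, via the identification of each fiber $\mum\inv(x)$ with $G_x$ inside $G\to G/G_x$, also (ii) follow. The transversality step you flag as the main obstacle is handled in exactly the two-stage way you describe ($T$ first, then $U$), so there is no gap.
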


To prove the lemma, it is convenient to choose, once and for all,
a principal $\sll$-triple
$(\bbe,\bbh,\bbf)$. 
Let  $\b_\bbe$, resp. $ \b_\bbf$, be the  unique Borel 
subalgebra of $\g$ that contains the element 
$\bbe$, resp. $\bbf$. Let $\u_\bbe=\u(\b_\bbe)$, resp. 
$\u_\bbf=\u(\b_\bbf)$.
We use similar notation for 
 the corresponding subgroups of $G$. 
 Since $\bbo_-(\bbb)$ is a $T$-torsor, the imbedding  $\bbe+\bbb\into
\bbo_-(\bbb)+\bbb$
induces a 
$B_\bbf$-equivariant isomorphism
\beq{bboe}
B_\bbf\times_{U_\bbf}(\bbe+\b_\bbf) \ \iso \ \bbo_-(\bbb)+\bbb.
\eeq

 Let $\bbe+\g_\bbf$ be the Kostant slice.
We recall the following well-known result, see
\cite{Ko2}.

\begin{prop}\label{munuprop} \vi The map $\vth: \g_r\to\fc$
is a smooth  and surjective morphism; moreover, each fiber of this map
 is a single $G$-orbit in $\g_r$.

 \vii We have $\bbe+\g_\bbf\sset\g_r$. Furthermore, the slice $\bbe+\g_\bbf$
 meets every $G$-orbit in $\g_r$ transversely at a single point,
 in particular, one has 
 $(\bbe+\g_\bbf)\cap \Ad G(\bbe)=\{\bbe\}$.

\viii The composition $\bbe+\g_\bbf\into \g_r\xrightarrow{\vth}\fc$  is an isomorphism.

\iv The action map $U_\bbf\times (\bbe+\g_\bbf) \, \to\,\bbe+\b_\bbf$ is an isomorphism.\qed
\end{prop}

\begin{proof}[Proof of Lemma \ref{torsor}] 
We have the following chain of 
$G$-equivariant
isomorphisms:
\beq{longiso}
\oox\ccong G\times_{B_\bbf} (\bbo_-(\b_\bbf)+\b_\bbf)\ccong
G\times_{U_\bbf} (\bbe+\b_\bbf)\cong G\times_{U_\bbf} (U_\bbf\times (\bbe+\g_\bbf))\ccong
G\times(\bbe+\g_\bbf),
\eeq
where the first isomorphism is immediate from \eqref{xx},
the second  isomorphism follows  from \eqref{bboe}, and
the third   isomorphism follows  from  Proposition \ref{munuprop}(iv).
Using \eqref{longiso} and the identification
$\tbm=G/\U_\bbf$,  the maps in \eqref{moment} read as follows:
\[
\xymatrix{
\tbm=G/\U_\bbf\ &&&\ \oox \ \cong\  G\times_{U_\bbf} (\bbe+\b_\bbf)\ccong G\times (\bbe+\g_\bbf) \
\ar[lll]_<>(0.5){gU_\bbf/U_\bbf \ \leftarrowtail\  (g,x):\ q}
\ar[rrr]^<>(0.5){\mum :\ (g,x)\ \rightarrowtail \ 
\Ad g(x)}&&&
\ \g.
}
\]
This yields parts (i) and  (iii) of Lemma \ref{torsor}.
Also, it is immediate from Proposition \ref{munuprop}(i)
and the explicit description above that the
differential of the map $\mum$ is surjective, hence this map is a smooth morphism.
To prove (ii) we first show that
$\oox\to \g_r$ is a $\fZ$-quasi-torsor, i.e. the action
map $a: \fZ\times_{\g_r}\oox\to\oox\times_{\g_r}\oox$ is an isomorphism.
The varieties involved are smooth since $\fZ$ and $\oox$ are smooth schemes over $\g_r$.
Thus, it suffices to show that $a$ is a bijection.
To this end, let  $x\in \g_r$.
It follows from (iii)  that  the map
$\mum: \mum\inv(\Ad G(x))\to \Ad G(x)$ can be identified with the
quotient map $G\to G/G_x$. This identification respects the
$G$-action. Furthermore, the fiber $\mum\inv(x)\sset \mum\inv(\Ad G(x))$
goes, via the identification, to the subgroup $G_x\sset G$.
This yields a $G_x$-equivariant isomorphism $\mum\inv(x)\cong G_x$,
proving that $\oox\to \g_r$ is a quasi-torsor.

It remains to show that this
quasi-torsor is \'etale locally trivial. By $G$-equivariance, it suffices to
show that for any $x\in \bbe+\g_\bbf$
the quasi-torsor has a section on an \'etale neighborhood of $x$.
To construct such a  section, recall
that the Lie algebra  $\g_\bbe$ is an $\ad\bbh$-stable
subspace of  $\u_\bbe$
and let   ${\mathfrak v}$ be an
arbitrary $\ad\bbh$-stable vector
space complement of $\g_\bbe$ in $\u_\bbe$.
The group $U_\bbe$ being unipotent, the image of
${\mathfrak v}$ under the exponential map $\exp: \u_\bbe\to U_\bbe$
is a closed  algebraic subvariety $V\subset U_\bbe$; moreover, the map
$\exp: {\mathfrak v}\to V$ is an isomorphism  of algebraic varieties.
We define a map
$f: V\times (\bbo_-(\b_\bbf)+\b_\bbf)\to \g_r$ 
by the assignment
$(v,y)\mto \Ad v(y)$.

To complete the proof we observe that   Proposition \ref{munuprop}(ii) implies,
using that
$\u_\bbe={\mathfrak v}\oplus \g_\bbe$,
that one has
a direct sum decomposition
$\g=\fd(\b_\bbf)\oplus [\bbe, {\mathfrak v}]$,
see Section \ref{inv} for the definition of $\fd(\b_\bbf)$.
It follows that for any $x$ in a
Zariski neighborhood of $\bbe$ in $\bbe+\g_\bbf$ 
one has $\g=\fd(\b_\bbf)\oplus [x, {\mathfrak v}]$.
Using the standard ${\mathbb G}_m$-action that contracts the slice
$\bbe+\g_\bbf$ to $\bbe$ and the fact that  the vector
spaces $\fd(\b_\bbf)$ and ${\mathfrak v}$ are
 ${\mathbb G}_m$-stable,  we deduce that a similar direct sum decomposition holds
for all $x\in \bbe+\g_\bbf$.
Now, the set $\bbo_-(\b_\bbf)+\b_\bbf$ is an open
subset of  $\fd(\b_\bbf)$ and for
$x\in \bbe+\g_\bbf$, we  have $(1,x)\in V\times (\bbo_-(\b_\bbf)+\b_\bbf)$.
It follows from
the decomposition  $\g=\fd(\b_\bbf)\oplus [x, {\mathfrak v}]$
 that
the differential of the map $f$ at the point $(1,x)$ 
is a vector space  isomorphism.
Hence, there is a Zariski open neighborhood
$D\sset V\times (\bbo_-(\b_\bbf)+\b_\bbf)$, of $(1,x)$, such that the restriction
of $f$ to $D$ is  an \'etale morphism. Now, view (as we may)
$\oox\times_{\g_r}D$ as a subvariety of
$(\bb\times\g_r)\times_{\g_r}D=\bb\times D$.
Then, it is immediate from definitions
that the
map $D\to \bb\times D,\, (v,y)\mto (\Ad v(\b_\bbf), v,y)$, provides
a section of the $f^*\fZ$-torsor $f^*\oox=\oox\times_{\g_r}D\to D$, as desired.
\end{proof}

\subsection{}\label{can-psi}
 For a Borel subgroup  $\bar  B$, one has a chain of imbeddings
$\bbo_-(\bar\b)\into \fd(\bar\b)/\bar\b=\fa^*(\bar\b)\into \u(\bar\b)^*$,
cf. \eqref{killing}. Let $\Psi: \bbo_-(\bar\b)\to \u(\bar\b)^*$ be the composition.
A character  of the Lie algebra
$\u(\bar\b)$ is 
 called {\em nondegenerate} if it is contained in the image of $\Psi$.
 Thus,  the elements
of $\bco(\bar \b)$ may (and will) be identified  with nondegenerate characters.
It is clear  from \eqref{xx}-\eqref{moment} that
one has $q\inv(\bar \b,s)=\Psi(s)+\u(\bar  \b)^\perp$,
where we have used the identifications from \eqref{killing}.

Let $\u_\bb$ be a vector bundle on $\bb$ with fibers $\u(\bar\b),\ \bar\b\in\bb$, 
and  let $\u_\tbm$ be its pull-back to $\tbm$ via the projection $\tbm\to \bb,\ (\bar \b,s)\mto\bar \b$.
The assignment  $(\bar \b,s)\mto \Psi(s)$ gives a canonical section of the dual vector bundle
$\u_\tbm^*$. We may view $\u_\tbm^\perp$ as a  subbundle of the trivial 
 vector bundle $\tbm\times\g^*\to\tbm$. Then,
it follows from the above discussion that there is a canonical
 $G$-equivariant isomorphism
\beq{psi-can}
\oox\ccong \Psi+\u_\tbm^\perp =: \T^\psi\tb_-,
\eeq
of schemes over $\tbm$, where   
 we have used the identifications from \eqref{killing}.

Fix a point
$(\bar \b,s)\in\tbm$  and let $\bar U=[\bar B,\bar B]$, resp. $\bu=\u(\bar\b)$
and $\psi=\Psi(s)$.
Using the identification $\tbm=G/\bU$, isomorphism \eqref{psi-can}
takes the form
\beq{psix}
\oox\cong G\times_\bU (\psi+\bu^\perp)=:\T^\psi(G/ \bar U).
\eeq

Let $m: \T^*G\to\bu^*$ be
the moment map
associated with the $\bar U$-action on $G$ by right translations.
The variety $G\times_\bU (\psi+\bu^\perp)$
 may be identified with $m\inv(\psi)/\bar U$,
a Hamiltonian reduction of $\T^*G$ with respect to $(\bar U,\psi)$.
Therefore,   $\T^\psi(G/ \bar U)$ comes equipped with a  symplectic structure such that
the  $G$-action on  $\T^\psi(G/ \bar U)$ is  Hamiltonian and the corresponding moment
map  goes, via \eqref{psix}, to the map $\mum$.
The symplectic structure on  $\T^\psi(G/\bar  U)$ gives, via \eqref{psix},  a symplectic structure on ~$\oox$.

It is possible to use isomorphism \eqref{psi-can} to define
the  symplectic structure on $\oox$ in a canonical way that does not involve the choice of 
a point $(\bar \b,s)\in\tbm$. To this end, one has to
generalize the  Hamiltonian reduction construction
to the setting of group scheme actions.
In section \ref{gtors}, we will explain a quantum counterpart of such a construction that produces
a sheaf $\dd_\tbm^\Psi$, of twisted differential operators on $\tbm$,
 that may be viewed as a quantization of $\oox$.

\section{The Miura variety}\label{4}

\subsection{}\label{Ysec} We will freely use the notation of \S\ref{fa}.
Put  $\tgr=\pi\inv(\g_r)$, a $G$-stable Zariski open subset of ~$\tg$.

Part (i)  of the following result is due to Kostant; part (ii) is also known, cf. e.g. \cite{Gi2}, Lemma 5.2.1, for a proof.
\begin{prop}\label{genpos} \vi The restriction of 
the map  $\pi\times\nu$ to $\tg_r$
yields an isomorphism $\tgr\iso \g_r\times_\fc\t$.

\vii Let $\b, \bar{\b}$ be a pair of Borel subalgebras.
Then, the set $\b\cap (\bco(\bar{\b})+\bar{\b})$ is nonempty iff
$\b$ and $\bar \b$ are in opposite position.
\end{prop}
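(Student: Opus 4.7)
I plan to handle (i) by Zariski's Main Theorem. The composition $\tg \xrightarrow{\nu}\t\to\fc$ coincides with $\tg\xrightarrow{\pi}\g\xrightarrow{\vth}\fc$, so $\pi\times\nu$ descends to a morphism $\Phi:\tg_r\to\g_r\times_\fc\t$. I would show $\Phi$ is a proper, birational morphism between smooth varieties of the same dimension $\dim\g$, and conclude via ZMT. Properness is inherited from the projectivity of $\pi$; smoothness of the target comes from $\vth\rst{\g_r}$ being smooth (Proposition~\ref{munuprop}(i)) pulled back along the finite flat Chevalley map $\t\to\fc$. Birationality can be checked on the regular-semisimple locus, where both sides are \'etale of degree $|W|$ over $\g_{rs}$ and $\Phi$ is an isomorphism: distinct Borels through a regular semisimple $x$ give distinct images in $\t$ under $\nu$. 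Over the rest of $\g_r$, quasi-finiteness of $\Phi$ is the classical bijection between the Springer fiber $\bb_x$ and the set $\{t\in\t : [t]=\vth(x)\}$ implemented by $\nu$.

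For (ii)$(\Leftarrow)$, given opposite $\b,\bar\b$ sharing a common Cartan, any preimage in $\u(\b)$ of a point of $\bbo(\b)$ under the projection $\u(\b)\twoheadrightarrow\fa(\b)$ lies in $\u(\b)\cap\fd(\bar\b)$, and its image in $\fd(\bar\b)/\bar\b$ lands in $\bco(\bar\b)$ by \eqref{bbo-bco}; this produces an element of $\b\cap(\bco(\bar\b)+\bar\b)$.

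The converse $(\Rightarrow)$ of (ii) is the main obstacle. Let $x\in\b\cap(\bco(\bar\b)+\bar\b)$; by Lemma~\ref{torsor}(i), $x$ is regular. Both conditions in (ii) are $G_\De$-invariant, so I may assume $\bar\b=\b_\bbf$. Using $\bco(\b_\bbf)+\b_\bbf=T\!\cdot\!(\bbe+\b_\bbf)$ together with Proposition~\ref{munuprop}(iv), I write $x=\Ad(ut)(s_0)$ for some $u\in U_\bbf$, $t\in T$, $s_0\in\s$; since $ut\in B_\bbf$ preserves $\b_\bbf$, after replacing $\b$ by $\Ad(ut)^{-1}\b$ the claim reduces to: \emph{every Borel $\b'$ containing a point $s_0\in\s$ is in opposite position to $\b_\bbf$.} I would prove this via the $\Gm$-action on $\bb$ by $\Ad(\tau^\bbh)$. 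Since $\g_\bbf$ has non-positive $\ad\bbh$-weights $\lambda_i\le 0$, the scalar-rescaled element
\[
\tau^{-2}\Ad(\tau^\bbh)(s_0)\ =\ \bbe\,+\,\sum_i \tau^{\lambda_i-2}\,y_i
\]
lies in $\Ad(\tau^\bbh)\b'$ (scaling by a nonzero scalar preserves membership in a linear subspace) and converges to $\bbe$ as $\tau\to\infty$. Hence the Bialynicki-Birula limit $\b'_\infty:=\lim_{\tau\to\infty}\Ad(\tau^\bbh)\b'$ contains $\bbe$ and must equal $\b_\bbe$. Principality of $\bbh$ then identifies the BB attracting cell of $\b_\bbe$ with the open $B_\bbf$-orbit $B_\bbf\!\cdot\!\b_\bbe$, which coincides with the locus of Borels opposite to $\b_\bbf$; this forces $\b'$, and hence $\b$, to be opposite to $\b_\bbf$. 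I expect this last identification of the attracting cell with the big cell to be the most delicate point of the proof --- sign-sensitive but standard.
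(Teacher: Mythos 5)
The paper does not actually prove Proposition \ref{genpos}: part (i) is attributed to Kostant and part (ii) is cited from [Gi2, Lemma 5.2.1], so there is no internal argument to compare against. Your proposal is a correct, essentially self-contained proof modulo the same inputs the paper itself takes for granted (Proposition \ref{munuprop} and Lemma \ref{torsor}), and both halves follow the standard lines of the cited sources: for (i), properness of $\pi$ plus quasi-finiteness over $\g_r$ gives a finite morphism, birational over $\g_{rs}$, onto the smooth (hence normal) target $\g_r\times_\fc\t$; for (ii), reduction to the Kostant section via $\bco(\b_\bbf)+\b_\bbf=\Ad(B_\bbf)\,\s$ and then the contraction $\lim_{\tau\to\infty}\Ad(\tau^\bbh)$, whose full attracting cell at $\b_\bbe$ is exactly the big cell $U_\bbf\cdot\b_\bbe$ of Borels opposite to $\b_\bbf$. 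The last identification you worry about is indeed standard and is most cleanly seen by noting that every Bruhat cell $U_\bbf\cdot(w\b_\bbe)$ is $\Ad(\tau^\bbh)$-stable and flows to its own fixed point $w\b_\bbe$, so the cells are precisely the attracting sets.

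Two small points to tighten. In (ii)$(\Leftarrow)$ the phrase ``any preimage in $\u(\b)$ of a point of $\bbo(\b)$ \dots lies in $\u(\b)\cap\fd(\bar{\b})$'' is false: a general lift differs by an element of $[\u(\b),\u(\b)]$, and $[\u(\b),\u(\b)]\cap\fd(\bar{\b})=0$, so only the canonical lift through $\fd(\bar{\b})\cap\u(\b)$ (equivalently, the lift with no component in $[\u(\b),\u(\b)]$ relative to the common Cartan) works; that one lift is all you need, so the conclusion stands. In (i), to invoke ``finite $+$ birational $+$ normal target $\Rightarrow$ isomorphism'' you should record that $\g_r\times_\fc\t$ is irreducible (each fiber of $\vth$ is a single orbit of the connected group $G$, so the fiber product over $\t$ is irreducible), or at least that $\g_{rs}\times_\fc\t$ is dense in it; otherwise a finite birational map could miss a component. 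With these remarks the argument is complete.
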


\begin{defn}
The {\em Miura variety} is defined as 
$\wtx:=\xx\times_{\g_r}\tgr$.\ 
Set theoretically, we have
 \begin{align}\wtx&=\{(\bar{\b} ,\b,x)\in\bb\times\bb \times\g\ \mid\ 
\bar{\b}\in\mum\inv(x),\, \b \in\pi\inv(x)\}\label{triples}\\
&=\big\{(\bar{\b}, \b ,x)\in\bb\times\bb \times\g\ \mid\ 
x\in \b \,\cap\,[\bco(\bar{\b})+\bar{\b}]\big\}.\nonumber
\end{align}
\end{defn}

From 
Proposition \ref{genpos}(i), we deduce
\beq{oxx_fib}
Z\ =\ \oxx\times_{\g_r}\tgr
\ \cong\ \oxx\times_{\g_r}(\g_r\times_\fc\t)
\ \cong\ \oxx\times_\fc\t.
\eeq

We let  $G$ act on $\oxx\times_{\g_r}\tgr$ diagonally.  Let $\wt\pi: Z\to\oox$, resp.
$\wt\mu:\ Z\to \tgr$, be the first, resp. second, projection. 
These maps are $G$-equivariant and one has a
 diagram with cartesian squares:
\beq{cartZ}
\xymatrix{
\wtx\  \ar[d]^<>(0.5){\wt\pi} 
\ar[rrr]^<>(0.5){\wt\mu}\ar@{}[drrr]|{\Box}
&&&\ \tg_r\cong \g_r\times_\fc\t\
\ar[d]^<>(0.5){\pi}\ar[rr]^<>(0.5){\nu}\ar@{}[drr]|{\Box}&&
\ \t \ \ar[d]^<>(0.5){\th}\\
 \oox\ 
\ar[rrr]^<>(0.5){\mum }&&&\ \g_r\ \ar[rr]^<>(0.5){\vth} && \ \fc=\t/W,
}
\eeq
where $\th$ is the quotient map. 
It follows from  Lemma \ref{torsor}(iii) that   the map $\nu\ccirc\wt\mu$  is a $G$-torsor,
in particular, $Z$ is smooth.

Recall the notation of \S\ref{obb}.
By Proposition \ref{genpos}, the assignment 
$(\bar{\b}, \b ,x)\mto (\bar{\b}, \b)$ gives a map
$Z\to\Om$. 
Let   $G$ act on $(\tbm\times_{\bb}\Om) \times \t$
via its action on $\tbm\times_{\bb}\Om$, the first factor.

\begin{prop}\label{zsmooth} 
 The  following map  is a $G$-equivariant  isomorphism
\[q_Z:\  Z \too
 (\tbm\times_{\bb}\Om) \times \t,\quad 
(\bar{\b}, \b ,x)\ \mto\ \big(\big((\bar{\b}, x\mod \bar\b),\, (\bar\b,\b)\big),\ x\mod\u(\b )\big).
\]
\end{prop}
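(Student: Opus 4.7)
The strategy is to produce an explicit two-sided inverse $\phi$ to $q_Z$. First I check that $q_Z$ lands in the target: given $(\bar\b,\b,x) \in Z$, the condition $(\bar\b,x) \in \oxx$ gives $x \mod \bar\b \in \bco(\bar\b)$, so $(\bar\b, x \mod \bar\b) \in \tbm$; Proposition \ref{genpos}(ii) applied to the nonempty intersection $\b \cap (\bco(\bar\b)+\bar\b)$ forces $(\bar\b,\b) \in \Om$; and $x \in \b$ makes $x \mod \u(\b)$ an element of $\b/\u(\b) = \t$. Since both $\tbm \to \bb$ and $\Om \to \bb$ send their points to $\bar\b$, the fiber-product condition is automatic.

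To construct $\phi$, let $((\bar\b,s),(\bar\b,\b),t)$ be a point of $(\tbm\times_\bb\Om)\times\t$. Because $(\bar\b,\b)\in\Om$, one has the triangular decomposition $\g = \u(\bar\b) \oplus (\b\cap\bar\b) \oplus \u(\b)$, and the composition $\b\cap\bar\b \hookrightarrow \b \twoheadrightarrow \b/\u(\b) = \t$ is an isomorphism; let $\hat t \in \b\cap\bar\b$ be the preimage of $t$. The isomorphism $\kap_-\colon \fd(\bar\b)\cap\u(\b) \iso \fd(\bar\b)/\bar\b$ from the discussion preceding \eqref{bbo-bco} sends a distinguished lift $\tilde s := \kap_-^{-1}(s)$, living in $\u(\b)$. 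Set $\phi((\bar\b,s),(\bar\b,\b),t) := (\bar\b,\b,\hat t + \tilde s)$. Then $x := \hat t + \tilde s \in (\b\cap\bar\b)+\u(\b) = \b$, and $x \mod \bar\b = \tilde s \mod \bar\b = s \in \bco(\bar\b)$, so $(\bar\b,x) \in \oxx$. Membership of $(\bar\b,\b,x)$ in $Z$ further requires $x \in \g_r$; this is the main point to justify carefully, but it follows from the classical fact that any sum of a Cartan element with a principal nilpotent is regular — and $\tilde s$ is principal nilpotent in $\b$, since $s$ lies in the open $T$-orbit of $\fd(\bar\b)/\bar\b$, forcing $\tilde s$ to have nonzero component in every simple root space of $\b$.

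The composite $q_Z \circ \phi$ is the identity by direct substitution. For $\phi \circ q_Z$: given $(\bar\b,\b,x) \in Z$, the triangular decomposition lets us write $x = x_0 + x_+$ with $x_0 \in \b\cap\bar\b,\ x_+\in\u(\b)$ (the $\u(\bar\b)$-component vanishes since $x \in \b$). Then $x_0 = x \mod \u(\b)$ under the identification $\b\cap\bar\b \iso \t$; and from $x \in \fd(\bar\b)$ combined with $x_0 \in \bar\b \subset \fd(\bar\b)$ one deduces $x_+ \in \fd(\bar\b)\cap\u(\b)$, so $\kap_-^{-1}(x_+ \mod \bar\b) = x_+$, and the reconstructed element is $x_0 + x_+ = x$.

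Finally, $G$-equivariance of both maps is manifest from their intrinsic construction in terms of the pair $(\bar\b,\b)$. The $T$-action on the target runs only through the $\bco(\bar\b)$-coordinate of the $\tbm$-factor (the $\Om$ and $\t$ factors being $T$-fixed); under $\phi$ this corresponds to the action on $Z$ that fixes the Borel pair and $\hat t$ and acts on $\tilde s \in \u(\b)$ by the natural $T$-weight-space action, and equivariance follows from the $T$-equivariance of $\kap_-$. The only mildly non-routine ingredient in the whole argument is the regularity of $\hat t + \tilde s$ in Step 2.
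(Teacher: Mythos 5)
Your proof is correct, but it proceeds quite differently from the paper's. The paper disposes of the proposition with a torsor argument: by Lemma \ref{torsor}(iii) together with the cartesian diagram \eqref{cartZ}, the composite $\nu\ccirc\wt\mu:Z\to\t$ is a $G$-torsor, and by Lemma \ref{closed} so is $(\tbm\times_{\bb}\Om)\times\t\to\t$ (a constant family with fiber the $G$-torsor $\Xi$); since $q_Z$ is $G$-equivariant and commutes with the projections to $\t$, it is a morphism of $G$-torsors over $\t$ and hence automatically an isomorphism, with no need to exhibit an inverse. You instead construct the inverse explicitly from the triangular decomposition $\g=\u(\b)\oplus(\b\cap\bar\b)\oplus\u(\bar\b)$, which obliges you to supply the one genuinely nontrivial input that the torsor argument sidesteps, namely the regularity of $\hat t+\tilde s$. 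Your appeal to the classical fact about a Cartan element plus a principal nilpotent is fine, but you could get this for free from the paper's own Lemma \ref{torsor}(i): since $(\hat t+\tilde s)\operatorname{mod}\bar\b=s\in\bco(\bar\b)$, the element lies in $\bco(\bar\b)+\bar\b$, and $\mum(\oxx)=\g_r$ says precisely that all such elements are regular. What your route buys is explicitness: the inverse you write down is exactly the parametrization underlying \eqref{kxyz} and the coordinates \eqref{zz2}, and it makes the fixed points of the $T$-action (the $\Om$- and $\t$-factors) visible; what the paper's route buys is brevity and the fact that equivariance alone does all the work. One caveat applying equally to both arguments: the $T$-action on $Z$ is never pinned down beforehand, so your closing identification of it (fixing the Borel pair and $\hat t$, scaling $\tilde s$ by the simple-root weights, i.e.\ $x\mto\Ad\tilde t(x)$ for $\tilde t$ the lift of $t$ to $B\cap\bar B$) is really the definition that makes $q_Z$ equivariant; it would be worth saying so explicitly rather than presenting it as a verification.
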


\begin{proof} It is immediate from the construction that the map in question is
 $G$-equivariant. Further, we know that $\tbm\times_{\bb}\Om$  is a $G$-torsor,
see Lemma \ref{closed}. Therefore,  the  map $q_Z$ is a morphism of $G$-torsors on $\t$.
Hence, it is an isomorphism.
\end{proof}

 We now give a more explicit
(though less canonical) description of the Miura variety.
To this end,  we identify  
$\bb\times\bb=G/B_\bbf\times G/B_\bbe$.
The isotropy group of the base  point equals  $B_\bbf\cap B_\bbe=G_\bbh$, a maximal  torus
in $G$. Hence, one has a $G$-equivariant isomorphism
$G/G_\bbh\iso \bbm$. Let $\bbo:={\u(\b_\bbe)\cap [\bbo(\bbb)+\bbb]}$.
This is a $G_\bbh$-torsor. The fiber of the projection
$pr_\Om: Z\to\Om$ over  the base  point equals  $\bbo+\g_\bbh$.
For
$s\in {\mathbb O},\ h\in\g_\bbh$ and  $t\in G_\bbh$,
one has
$\Ad t(s+h)= \Ad t(s) +h$, so  the set ${\mathbb O}+
\g_\bbh$ is $\Ad G_\bbh$-stable. Further,  $\bbe\in {\mathbb O}$
and we have
\beq{sets}
\bbe+\g_\bbh\ =\ \b_\bbe\cap(\bbe+\bbb)\en \sset\en
\b_\bbe\,\cap\,[\bbo(\bbb)+\bbb]\ =\ {\mathbb O}+
(\b_\bbe\cap\bbb)\ =\ {\mathbb O}+
\g_\bbh.
\eeq

We deduce $G$-equivariant isomorphisms
\beq{zz2}
Z\ \cong\ G\times_{G_\bbh}({\mathbb O}+\g_\bbh)\ \cong\ 
G\times_{G_\bbh}(G_\bbh\cd\bbe+\g_\bbh)\ \cong\ G\times (\bbe+\g_\bbh).
\eeq
In particular,
for any  $(\bar{\b}, \b ,x)\in Z$, there are uniquely determined
elements $h\in\g_\bbh$
and $g\in G$ such that one has $(\bar{\b}, \b ,x)=\Ad g(\b_\bbe,\bbb, h+\bbe)$.
Further, the map $G\to \tbm\times_\bb\Om,\ g\mto (g U_\bbf/U_\bbf, gG_\bbh/G_\bbh)$,
is a  $G$-equivariant isomorphism. With these identifications, the isomorphism $q_Z$ of
 Proposition \ref{zsmooth} takes the form
$q_Z:\ (g,h+\bbe)\mto (g,h)$.

\begin{rem}
The isomorphism of Proposition \ref{zsmooth} can also be seen as follows.
Given a Borel $\bar \b$, let 
$\wh\bbo_-(\bar \b):=(\bar\b+\bbo_-(\bar\b))/\u(\bar\b)$.
The   natural 
projection $d: \wh\bbo_-(\bar \b)\to\bco(\bar \b),\
y\mto y\mod \bar\b$, is an 
affine bundle, a torsor under the action of $\t=\bar\b/\bu(\bar\b)$
viewed as an additive group.

For any Borel $\b$ which is opposite to $\bar \b$, we have a chain of maps
\[\u(\b)\cap(\bar\b+\bbo_-(\bar \b))\overset{a}\into
\b\cap(\bar\b+\bbo_-(\bar \b))\overset{b}\into
\bar\b+\bbo_-(\bar \b)\overset{c}\onto
(\bar\b+\bbo_-(\bar \b))/\u(\bar\b)\overset{d}\onto
(\bar\b+\bbo_-(\bar \b))/\bar\b.
\]
It is clear that the composite map $c\ccirc b$,
resp. $d\ccirc c\ccirc b\ccirc a$,  is an isomorphism.
Therefore, the map $(c\ccirc b\ccirc a)\ccirc(d\ccirc c\ccirc b\ccirc a)\inv:\ \bbo_-(\bar \b)\to \wh\bbo_-(\bar \b)$
provides a section of the $\t$-torsor $d$.

One can let the Borel subalgebra $\bar \b$ vary.
Specifically, we put
\begin{gather*}
Z_\u:=\{(\bar \b,\b,x)\mid (\bar \b,\b)\in\Om,\ x\in \u(\b)\cap(\bar\b+\bbo_-(\bar \b))\};\\
\wh\bb_-:=\{(\bar\b,y)\mid \bar\b\in\bb,\ y\in\wh\bbo_-(\bar \b)\}.
\end{gather*}

Similarly to the above, one has a $\t$-torsor $d: \wh\bb_-\to\tbm$.
A counterpart of the above 
diagram
has the form
\beq{section}
\xymatrix{
Z_\u\ \ar@{^{(}->}[r]^<>(0.5){a}&Z\ \ar@{^{(}->}[r]^<>(0.5){b}&
\ \oox\times_\bb\Om\ \ar@{->>}[r]^<>(0.5){c}&
\ \wh\bb_-\times_\bb\Om\ \ar@{->>}[r]^<>(0.5){\en\ d\times\Id_\Om\en\ }&
\ \tbm\times_\bb\Om.
}
\eeq
This is a diagram of smooth schemes over $\Om$ such that, for
any  $(\bar \b,\b)\in \Om$, the corresponding fibers in \eqref{section} form the previous diagram.
It follows that the composite map $c\ccirc b$,
resp. $(d\times\Id_\Om)\ccirc c\ccirc b\ccirc a$,  in \eqref{section} is an isomorphism.
We deduce that the map $(d\times\Id_\Om)\ccirc c\ccirc b$, in \eqref{section}, is  a $\t$-torsor and
the map $a\ccirc((d\times\Id_\Om)\ccirc c\ccirc b\ccirc a)\inv:\ \tbm\times_\bb\Om
\to Z$
provides a section of that torsor.
This section yields a trivialization $Z\cong (\tbm\times_\bb\Om)\times \t$,
which is the isomorphism $q_Z$ of Proposition ~\ref{zsmooth}.

Using the canonical isomorphism $\tbm\times_{\bb}\Om\cong \Xi$,
see \eqref{wtx}, we will often view  $q_Z$ as an  isomorphism  $Z\iso \Xi \times \t$.
\end{rem}

\subsection{Relation to dynamical Weyl groups}
We let $W$ act  on $\g_r\times_\fc\t$, resp.   $\oox\times_\fc\t$, via the natural
$W$-action on $\t$, the second factor.
Transporting the  $W$-action via
the isomorphism of Proposition \ref{genpos}(i), resp. Proposition \ref{zsmooth}, 
gives a  $W$-action on $\tgr$, resp. $Z$.
In particular, for any $x\in\g_r$, we get a well-defined $W$-action
$w: \b\mto\b^w$, on the fiber $\bb^x =\pi\inv(x)$.
The maps in the top row of diagram \eqref{cartZ} are  $G\times W$-equivariant.

Let $\BG$ be
 the automorphism group  of the  $G$-torsor $\Xi\cong\tbm\times_\bb\Om$.
Thus, $\BG$ is   noncanonically
isomorphic to $G$. 
The action 
of an element $w\in W$ on $Z$ gives an automorphism 
of the variety  $\Xi\times\t$ of the form
$w:\ (x,h)\mto (\phi_w(h)(x),\ w(h))$,
where $\phi_w: \t\to\BG$ is a certain 
regular map. It follows from the construction
that the  maps $\{\phi_w,\ w\in W\}$ satisfy
a cocycle equation:
\beq{phi1}
\phi_{w_1}(w_2(h))\cdot\phi_{w_2}(h)=\phi_{w_1w_2}(h),\qquad \forall w_1,w_2\in W,\ h\in \t.
\eeq

Explicit  formulas for the maps $\phi_w$ are reminiscent
of the formulas
appearing in the theory of classical $r$-matrices and
 dynamical Weyl groups, cf. \cite{GR} for some related
results.

\section{The key construction}\label{5}
\subsection{}\label{xyzsec} 
Let $\g_{rs}$ be the set of regular semisimple elements of $\g$.
Given a scheme $Y$ over $\g$, we put $Y\rs:=Y\times_\g\g\rs$.
For any variety $Y$, let $T_Y$  denote a constant group scheme $T\times Y\to Y$.

The goal of this subsection is  to relate, following
\cite{BK} and \cite{DG}, the group schemes  $\pi^*\fZ\to\tgr$
and 
$T_{\tgr}\to\tgr$.  To this end, we will use the following known result.

\begin{lem}\label{bk-lem} 
Let 
$B$ be a Borel subgroup with Lie algebra $\b$,
and $x\in  \b\cap\g_r$. Then,  $G_x\sset B$.
Furthermore, writing  $x=h+n$ for the Jordan decomposition of $x$,
we have $G_x=Z(G_h)\cdot U_x$, where $U_x$ is
 the unipotent radical of  the group $G_x$ and $Z(G_h)$, the center of $G_h$, is a torus (in particular, it is connected).
\end{lem}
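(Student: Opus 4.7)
The plan is to reduce everything to the well-known fact that a regular nilpotent element of a reductive Lie algebra lies in a unique Borel subalgebra. Write $x=h+n$ for the abstract Jordan decomposition in $\g$; by functoriality of Jordan decomposition under the inclusion $\b\hookrightarrow\g$ of algebraic Lie algebras, both $h$ and $n$ already lie in $\b$. The centralizer $\g_h$ is a reductive Levi subalgebra of full rank in $\g$, the element $n$ lies in $\g_h$ (since it commutes with $h$), and $\g_x=(\g_h)_n$. Regularity of $x$ then forces the dimension of $(\g_h)_n$ to equal the rank of $\g_h$, so $n$ is regular nilpotent in the reductive algebra $\g_h$. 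Moreover $\b\cap\g_h$ is a Borel subalgebra of $\g_h$ that contains $n$.

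I would first prove $G_x\subset B$. By uniqueness of the Borel subalgebra of $\g_h$ through the regular nilpotent $n$, the subalgebra $\b\cap\g_h$ is the only one. For any $g\in G_x$, functoriality of Jordan decomposition under $\Ad g$ forces $\Ad g(h)=h$ and $\Ad g(n)=n$, so $g\in G_h\cap G_n$. Then $\Ad g(\b\cap\g_h)$ is a Borel of $\g_h$ containing $n$, hence equals $\b\cap\g_h$. Thus $g\in N_{G_h}(\b\cap\g_h)=B\cap G_h\subset B$, yielding the first assertion.

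Next I would establish the decomposition $G_x = Z(G_h)\cdot U_x$ by working inside the connected reductive group $G_h$, where $G_x=(G_h)_n$ is the centralizer of a regular nilpotent. The classical structure result tells us that, on identity components, this centralizer equals $Z(G_h)^\circ\cdot U_x$, where $U_x$ is its unipotent radical; moreover $Z(G_h)^\circ\cap U_x=\{1\}$ since a torus meets a unipotent subgroup trivially, so $Z(G_h)^\circ\times U_x \hookrightarrow G_x$. Using that $G_x$ is connected (as asserted in \S\ref{inv}, a consequence of $G$ being of adjoint type), this identity component is all of $G_x$, and we obtain $G_x = Z(G_h)^\circ \times U_x$.

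The remaining point --- and the main obstacle --- is upgrading $Z(G_h)^\circ$ to the full center $Z(G_h)$. Any element of the center of a connected reductive group is semisimple. So for $z\in Z(G_h)\subset G_x$, writing $z=z_0\cdot u$ via the decomposition above with $z_0\in Z(G_h)^\circ$ and $u\in U_x$, we see that $u=z_0^{-1}z$ belongs to $Z(G_h)$ and is therefore semisimple, while also being unipotent as an element of $U_x$; hence $u=1$. Thus $Z(G_h)=Z(G_h)^\circ$ is a torus, completing the proof.
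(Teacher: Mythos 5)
The paper states this lemma as a ``known result'' and gives no proof, so there is nothing internal to compare against; judged on its own, your argument is essentially sound. The reduction to the regular nilpotent $n$ inside the Levi $\g_h$, the use of the uniqueness of the Borel subalgebra of $\g_h$ through $n$ to get $G_x\subseteq N_{G_h}(\b\cap\g_h)=B\cap G_h\subseteq B$, and the identification $(G_x)^\circ=Z(G_h)^\circ\times U_x$ (Kostant: the centralizer of a regular nilpotent is abelian, so $(G_x)^\circ$ is a connected abelian algebraic group whose maximal torus has Lie algebra the semisimple part $\fz(\g_h)$ of $\Lie G_x$) are all correct. You are implicitly using that $G_h=Z_G(h)$ is connected for a semisimple element $h$ of the Lie algebra --- standard in characteristic zero, but it is what makes $N_{G_h}(\b\cap\g_h)$ equal to the Borel subgroup of $G_h$ rather than something larger, so it deserves a word.

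The one step I would not let stand as written is the appeal to the connectedness of $G_x$, which you cite from the paper (the assertion actually appears in \S 3, where the group scheme $\fZ$ is introduced, not in \S\ref{inv}). That assertion is itself stated there without proof, and in the standard references the implication runs the other way: one first shows that $Z(G_h)$ is a torus when $G$ is adjoint, and then the connectedness of $G_x=Z(G_h)\cdot U_x$ falls out as a corollary. Deriving ``$Z(G_h)$ is connected'' from ``$G_x$ is connected'' therefore makes your proof circular relative to the usual development, and it hides where the adjointness hypothesis is actually used. The direct argument is short: choose a maximal torus $T_0$ of $G_h$ and a system of simple roots of $\g$ containing a system $\Delta_h$ of simple roots of $\g_h$; since $G$ has trivial center, $X^*(T_0)$ is the root lattice with the chosen simple roots as a basis, so $Z(G_h)=\bigcap_{\alpha\in\Delta_h}\ker(e^{\alpha})$ is a diagonalizable group with free character group $X^*(T_0)/\Z\Delta_h$, i.e.\ a subtorus. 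With that in hand, your final semisimple--unipotent computation becomes unnecessary, and the connectedness of $G_x$ is an output of the lemma rather than an input. The remainder of the write-up is fine.
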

\begin{proof} It is a standard fact that the group $G_h$ is connected and, moreover,
if $G$ is adjoint then the group $Z(G_h)$ is a torus. The proofs of other statements 
in the (much harder) case where $x$ is an element of $G$ rather than $\g$ can be found in
\cite{SS}, \S\S 1.6, 1.14.
\end{proof}

Given an element $(\b,x)\in\tgr$, let  $\vkap_{\b,x}$ be  the following composition
\beq{vkap}
\vkap_{\b,x}:\ G_x \ \intoo\  B\ \ontoo\  B/[B,B]=T,
\eeq
where the first map is well defined thanks to the lemma.
It is straightforward to upgrade the construction of  the map $\vkap_{\b,x}$
for each individual element
$(\b,x)\in\tgr$ and obtain  a morphism
$\dis\vkap:\ \pi^*\fZ\to T_{\tgr}$,
of  group schemes on $\tgr$.

Let the Weyl group $W$ act  on  $T\times\tgr$
diagonally and act  on $\fZ\times_{\g_r}\tgr$
through its action on $\tgr$, the second factor.
Further, we let $G$ act on $T\times\tgr$ and on  $\fZ\times_{\g_r}\tgr$
through its action on $\tgr$.
The actions of $W$ and $G$ on $T_\tgr=T\times\tgr$, resp. $\pi^*\fZ=\fZ\times_{\g_r}\tgr$,
commute.
This makes $T_\tgr$, resp.
$\pi^*\fZ$, a   $G\times W$-equivariant group scheme on $\tgr$.

\begin{lem}\label{imvkap}  The morphism $\vkap: \pi^*\fZ\to T\times\tgr$
is  a  morphism of $G\times W$-equivariant group schemes.
\end{lem}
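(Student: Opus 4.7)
My plan is to verify in turn the three assertions bundled in the lemma: that $\vkap$ is a morphism of group schemes over $\tgr$, that it is $G$-equivariant, and that it is $W$-equivariant. The first two are essentially formal consequences of the universal Cartan formalism; $W$-equivariance is the main content, and requires a small reduction to the regular semisimple locus.

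For the algebraicity and the group-scheme structure, I would realize $\vkap$ as the restriction to $\pi^*\fZ$ of a canonical morphism pulled back from $\bb$. Concretely, there is a universal Borel group scheme $\mathbf{B}\to\bb$ carrying a canonical quotient morphism $\mathbf{B}\twoheadrightarrow T\times\bb$ that sends each fiber $B$ to its Cartan quotient $B/[B,B]=T$. By Lemma \ref{bk-lem}, $\pi^*\fZ$ is a closed subgroup scheme of $\pi^*\mathbf{B}$, and composition with the pulled-back quotient recovers the pointwise formula \eqref{vkap}. Hence $\vkap$ is a morphism of group schemes over $\tgr$.

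For $G$-equivariance, $G$ acts trivially on the $T$-factor of $T\times\tgr$, so the claim is that $\vkap_{\Ad g(\b),\Ad g(x)}(ghg^{-1})=\vkap_{\b,x}(h)$ for all $g\in G$, $(\b,x)\in\tgr$, and $h\in G_x$. This is the defining property of the universal Cartan: the identifications $B/[B,B]=T$ are compatible with inner automorphisms of $G$ transporting one Borel to another.

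The main step, and the main obstacle, is $W$-equivariance. The $W$-action on $\pi^*\fZ$ fixes the $\fZ$-coordinate and acts on $\tgr$ through the isomorphism $\tgr\cong \g_r\times_\fc\t$ of Proposition \ref{genpos}(i); the action on $T\times\tgr$ is diagonal. Since both schemes in question are reduced and separated and $\pi^{-1}(\grs)$ is open and dense in $\tgr$, it suffices to verify $W$-equivariance on the regular semisimple locus. There, for $x\in\grs$, the fiber $\pi^{-1}(x)$ is the $W$-torsor of Borels containing the unique Cartan $G_x=\g_x$, and $\vkap_{\b,x}: G_x\to T$ is the classical identification of $G_x$ with $T$ obtained through the chosen Borel $\b$; passing from $\b$ to $\b^w$ twists this identification by $w$, by the standard description of how $W$ acts on the universal Cartan under change of Borel. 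The final piece to verify is that the $W$-action transported from $\g_r\times_\fc\t$ agrees, on $\pi^{-1}(\grs)$, with this geometric action permuting Borels containing a given element. This comes down to unwinding the identification of Proposition \ref{genpos}(i) via the map $\nu:(\b,x)\mapsto x\bmod\u(\b)$, which intertwines the two $W$-actions by construction.
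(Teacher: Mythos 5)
Your proposal is correct and follows essentially the same route as the paper: the construction of $\vkap$ as a morphism of group schemes and its $G$-equivariance are taken as immediate, and the $W$-equivariance is verified on the regular semisimple locus and then extended by Zariski density of $\pi^*\fZ_{rs}$ in $\pi^*\fZ$. Your additional detail on the universal Cartan formalism is a correct elaboration of what the paper leaves implicit.
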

\begin{proof} It is immediate to check that the morphism
$\pi^*\fZ\rs\to T_{\tg\rs}$ induced from
$\vkap$ by restriction is a $G\times W$-equivariant isomorphism.
The result follows from
this since  $\fZ_{rs}$ is
Zariski dense in $\fZ$.
\end{proof}

 The  $W$-equivariance of the map
$\vkap$ says that 
$\ \dis \vkap_{\wb,x}(g)=w(\vkap_{\b,x}(g))$, for  any $(\b,x)\in \tgr,\ g\in G_x,\
w\in W$.

It will be convenient in what follows to view $\g_r$ as a subset of $\g^*$ rather than $\g$.
The $G$-equivariant group scheme $\fZ$
on $\g_r$ descends to a smooth group scheme $\fZ_\fc$ on $\fc$, see \cite[\S 2.1]{Ngo}.
Thus, we have $\fZ=\fZ_\fc\times_\fc\g_r$.
From the isomorphism $\tgr\cong \g_r\times_\fc\t^*$ we get
\[\pi^*\fZ=\fZ_\fc\times_\fc\g_r\times_\fc\t^*,\quad\text{resp.}\quad
T_\tgr=T\times (\g_r\times_\fc\t^*).
\]

It is immediate from the  criterion
for faithfully flat descent  of morphisms 
that the composition
$\pi^*\fZ\xrightarrow{\,\vkap\,}T_\tgr=T\times (\g_r\times_\fc\t^*)
\xrightarrow{\,{}_{pr_{1,3}}\,}T\times \t^*$
factors through
a morphism,   \cite{BK}, \cite{DG},
\beq{kapz}
\vkap_\fc:\ \fZ_\fc\times_\fc\t^*\too T_{\t^*}=T\times\t^*,
\eeq
of group schemes on $\t^*$, so that one has
$\vkap=\vkap_\fc\times_{\t^*}\Id_\tgr$.

Let $\fz\to \g_r$, resp. $\fz_\fc\to\fc$, be the Lie algebra of $\fZ$, resp. $\fZ_\fc$.
Thus, $\fz$, resp. $\fz_\fc$,  is a vector bundle on $\g_r$, resp. $\fc$,
and one has a canonical isomorphism $\vth^*\fz_\fc\iso \fz$.
The fiber of $\fz$  over $x\in \g_r$ equals  $\g_x$, hence, the
fiber of  $\fz_\fc$ over $c\in\fc$ 
is canonically identified with $\g_x$, for any  $x\in \g_r$ such that $c=\vth(x)$.
Although the following  result is known, cf. eg. \cite{BF}, p.40, 
we  reproduce the proof since the argument will be used later.

\begin{lem}\label{fz-lem}
  There is a canonical isomorphism
  $\fz\cong\vth^*(\T^*\fc)$, resp. 
$\fz_\fc\cong\T^*\fc$.
\end{lem}
\begin{proof} It is clear that the second isomorphism follows from the first,
  by descent. To prove the first isomorphism,
fix  $x\in \g^*$ and let $N_x$, resp. $N^*_x$, be  the normal, resp. conormal, space at $x$
to  the $G$-orbit of $x$. 
Thus,  $N^*_x$ is a subspace of $(\g^*)^*$ and it is immediate to check that using the identification
 $(\g^*)^*=\g$, one has $N_x=\g_x$.
Assume now that $x$ is regular and let $c=\vartheta(x)$.
Then, Proposition \ref{munuprop}(i)  implies that 
the differential  of the map $\vth$
yields an isomorphism $d\vth: N_x\iso \T_c(\fc)$.
We deduce that  the dual of the map $d\vth$
provides a canonical isomorphism $\T_c^*(\fc)\iso N_x=\g_x$.
This isomorphism   sends $(df)_c$ to $d(\vth^*f)_x$,
where  $\vth^*: \C[\fc]\iso \C[\g^*]^G,\ f\mto\vth^*f$ is
the tautological isomorphism.
\end{proof}

Let $Y$ be a smooth symplectic manifold equipped with a morphism $\mu_Y: Y\to\fc$ and 
an action $\fZ_\fc\times_\fc Y\to Y$.  Let $\alpha: \fz_\fc\times_\fc Y\to \T Y$ be the differential
of that action and write $\xi_F$ for the Hamiltonian vector field on $Y$  (a section of $\T Y\to Y$)
associated with a regular function $F\in \C[Y]$.
 
Observe that the differential of a regular function
$f\in \C[\fc]$ may be viewed, thanks to Lemma \ref{fz-lem},
 as  a section $df\in \Ga(\fc, \fz_\fc)$. 
We say that the $\fZ_\fc$-action on $Y$
 is {\em Hamiltonian with moment map} $\mu_Y$ if the following  holds:
\beq{df}\alpha(df)=\xi_{\mu_Y^*f},\qquad\forall f\in \C[\fc].
\eeq

To unburden  notation we will write $\C[Y]^{\C[\fc]}$ for the Poisson centralizer
of the algebra $\mu_Y^*\C[\fc]$ in the Poisson algebra of regular functions on $Y$.

We have  a $\fZ_\fc$-action  $\fZ_\fc\times_\fc(T\times \t^*)\to T\times \t^*,\
(z,\, (t,\tau))\mto (\vkap_\fc(z)t,\tau)$.
The $\fZ_\fc$-action on $\xx$ gives, via the isomorphism
$\xx\cong\T^\psi\tbm$, see \eqref{psi-can}, a  $\fZ$-action on $\T^\psi\tbm$.

 The  proof of  the following result  is straightforfard.

\begin{lem}\label{ham-act} 
The $\fZ_\fc$-action on $\T^*T=T\times \t^*$,
resp.  $\T^\psi\tbm$, is Hamiltonian with moment map
$\th\ccirc pr_2$, resp. 
$\vth\ccirc\mum$.\qed
\end{lem}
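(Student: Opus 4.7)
I will verify the moment map identity $\alpha(df)=\xi_{\mu_Y^*f}$ for every $f\in\C[\fc]$; by linearity this suffices, since such differentials generate $\Ga(\fc,T^*\fc)\cong\Ga(\fc,\fz_\fc)$ locally as a $\C[\fc]$-module. In both cases I reduce the verification to the Hamiltonian nature of an ambient group action, thereby transferring the computation from $\fZ_\fc$ to $T$ or $G$.

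For $Y=\T^*T=T\times\t^*$, the standard translation action of $T$ is Hamiltonian with moment map $pr_2$, and for $\eta\in\t$ the Hamiltonian field of $pr_2^*\langle\cdot,\eta\rangle$ is the translation field by $\eta$. Since the $\fZ_\fc$-action factors through this $T$-action via $\vkap_\fc$, the generator $\alpha(df)$ at $(t,\tau)$ is translation on $T$ by $d\vkap_\fc(\th^*df)|_\tau\in\t$, where $d\vkap_\fc$ denotes the differential of $\vkap_\fc$ along its identity section. Matching this against $\xi_{(\th\ccirc pr_2)^*f}$, which by the chain rule is translation by $d(\th^*f)|_\tau\in\t$, reduces the claim to the equality
\[
d\vkap_\fc(\th^*df)\;=\;d(\th^*f)\qquad\text{in}\ \Ga(\t^*,\t),\quad\forall\,f\in\C[\fc].
\]
This is exactly the compatibility of $\vkap_\fc$ with the canonical identifications $\fz_\fc\cong T^*\fc$ and $\t\cong T^*\t^*$ established in~\cite{BK} and~\cite{DG}; equivalently, the differential of $\vkap_\fc$ at its identity section equals the codifferential of $\th\colon\t^*\to\fc$.

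For $Y=\oox\cong\T^\psi\tbm$, Lemma~\ref{torsor} identifies $\oox$ as a $\fZ$-torsor on $\g_r$, so the $\fZ_\fc$-action coincides with the restriction of the $G$-action on $\oox$ to the subgroup scheme $\fZ\subset G\times\g_r$. The $G$-action is Hamiltonian with moment map $\mum$ since $\oox$ is realized as a Hamiltonian reduction of $\T^*G$ (cf.~\S\ref{E-sec}). For $f\in\C[\fc]$, the function $\vth^*f$ on $\g_r$ is $G$-invariant, so $d(\vth^*f)|_x\in\g$ (via the Killing form) automatically lies in $\g_x=\fz_{\fc,\vth(x)}$; by construction, this section of $\fz$ represents $df\in\Ga(\fc,T^*\fc)$ under $\fz_\fc\cong T^*\fc$. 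The moment map identity for the $G$-action together with the chain rule then yield, at each $y\in\oox$,
\[
\iota_{\alpha(df)|_y}\,\om\;=\;(d_y\mum)^*\bigl(d(\vth^*f)|_{\mum(y)}\bigr)\;=\;d(f\circ\vth\circ\mum)|_y,
\]
which is exactly $\alpha(df)=\xi_{(\vth\ccirc\mum)^*f}$.

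The main obstacle is the $\T^*T$ case: one must unpack the construction of $\vkap_\fc$ from~\cite{BK} and~\cite{DG} and match it carefully against the normalization of the isomorphism $\fz_\fc\cong T^*\fc$. The $\oox$ case is, by contrast, a direct consequence of the chain rule together with the Hamiltonian reduction description of $\oox$.
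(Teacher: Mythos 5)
The paper offers no proof of this lemma (it is explicitly ``left for the reader''), so there is nothing in the text to compare against; your argument is essentially the natural one, and it is correct. The $\oox$ case is complete as written: the $\fZ_\fc$-action is by definition the restriction of the $G$-action to the subgroup scheme $\fZ\sset G\times\g_r$, the $G$-action is Hamiltonian with moment map $\mum$ by the Hamiltonian-reduction description of \S\ref{E-sec} and \eqref{psix}, and your pointwise contraction identity is legitimate because both $\iota_{(-)}\om$ and the moment-map identity $\iota_{\alpha(a)}\om=(d\mum)^*a$ are fiberwise linear in $a\in\g$, so substituting the varying element $a(y)=d(\vth^*f)|_{\mum(y)}\in\g_{\mum(y)}$ is allowed; combined with the chain rule this gives exactly $\alpha(df)=\xi_{(\vth\ccirc\mum)^*f}$.

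The only soft spot is in the $\T^*T$ case, where you reduce everything to the identity $d\vkap_\fc\bigl(\th^*(df)\bigr)=d(\th^*f)$ and then defer it to \cite{BK}, \cite{DG}. Since the definition of $\vkap$ in \S\ref{xyzsec} is completely explicit, you should verify this directly rather than cite it: for $\tau\in\t^*$ regular, choose $(\b,x)\in\tgr$ with $\nu(\b,x)=\tau$ and $x$ regular semisimple, so that $\g_x=\fh:=\b\cap\bar\b$ is a Cartan subalgebra and $d\vkap_{\b,x}$ is the canonical isomorphism $\fh\into\b\onto\t$. The section $df$ evaluated at $x$ is $d(\vth^*f)|_x\in\g_x=\fh$ (this is how the identification $\fz_\fc\cong\T^*\fc$ is normalized, cf.\ \S\ref{class-act}), and since this element already lies in $\fh$ it coincides with the differential of the restriction $(\vth^*f)|_{\fh}$, which under the identification $\fh\cong\t$ furnished by $\nu$ is $d(\th^*f)|_\tau$. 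Density of the regular semisimple locus then gives the identity everywhere. With that insertion the proof is self-contained; the remaining ambiguities are pure sign conventions for $\xi_F$ and for the symplectic form on $\T^*T$, which the paper does not fix and which therefore cannot be held against you.
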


\subsection{}\label{ysec} 
Recall the setting of  diagram \eqref{id-tg} and
let $(\T^*\tb)\reg:=\mu_{\T^*\tb}\inv(\g_r)$.
Using the identifications  of  diagram \eqref{id-tg},
we obtain the following pair of $G$-equivariant  torsors on $\tgr$:
\beq{2tors}
\xymatrix{
Z\ 
\ar[dr]_<>(0.5){^{\text{$\pi^*\fZ$-torsor}}}^<>(0.4){_{\mu_Z}}&&\ (\T^*\tb)\reg\
\ar[dl]^<>(0.5){^{\text{$T_\tgr$-torsor}}}_<>(0.4){_{\tilde\varrho }}
\\
&\tgr
&&
&&
}
\eeq

\begin{prop}\label{imb} There is a canonical   morphism
$\kap_Z: Z\to (\T^*\tb)\reg$, of  $G$-equivariant schemes over $\tgr$, such that
the  following  diagram commutes
\[
\xymatrix{
\pi^*\fZ \times_\tgr \wtx\
\ar[rrr]^<>(0.5){\pi^*\fZ\text{\em -action}}\ar[d]_<>(0.5){\vkap\,\times\,\kap_Z}
&&&\ \wtx\ 
\ar[d]_<>(0.5){\kap_Z}\\
T_\tgr\times_\tgr (\T^*\tb)\reg\
\ar[rrr]^<>(0.5){T_\tgr\text{\em -action}}
&&&\ (\T^*\tb)\reg
}
\]
\end{prop}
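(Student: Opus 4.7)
The plan is to construct $\kap_Z$ explicitly via the canonical isomorphism $\kap_{\bar\b,\b}$ of \eqref{bbo-bco}, and then to reduce the compatibility square to a $B$-equivariance property of that isomorphism. Concretely, given a point $(\bar\b, \b, x) \in \wtx$, formula \eqref{triples} yields $x \in \b \cap [\bco(\bar\b) + \bar\b]$, so by Proposition \ref{genpos}(ii) the Borels $\b$ and $\bar\b$ are automatically in opposite position: $(\b, \bar\b) \in \Om$ and $\kap_{\bar\b, \b}: \bco(\bar\b) \iso \bbo(\b)$ is defined. Writing $s := x \op{mod} \bar\b \in \bco(\bar\b)$, I propose to define
\[
\kap_Z(\bar\b, \b, x)\ :=\ \bigl(\b,\,x,\,\kap_{\bar\b, \b}(s)\bigr)\ \in\ \tg_r \times_\bb \tb\ \cong\ (\T^*\tb)\reg.
\]
This will be a well-defined regular morphism over $\tgr$, and $G$-equivariance will be automatic from the canonicity of $\kap_{\bar\b, \b}$.

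\medskip

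To verify the square, I would fix $(\bar\b, \b, x) \in \wtx$ and $g \in G_x$, an element of the fiber of $\pi^*\fZ$ over $(\b, x)$. By Lemma \ref{bk-lem} one has $G_x \subset B$, where $B \subset G$ is the Borel subgroup with $\Lie B = \b$; thus $g$ fixes $\b$ and conjugates $\bar\b$ to $\Ad(g)\bar\b$, which remains opposite to $\b$. By \eqref{vkap}, the element $\vkap_{\b, x}(g) \in T$ is precisely the image $\bar g$ of $g$ under $B \twoheadrightarrow B/[B,B] = T$. Since $g$ fixes $x$, one has $\Ad(g) s = x \op{mod} \Ad(g)\bar\b$, so the required commutativity reduces to the single identity
\[
\kap_{\Ad(g)\bar\b,\,\b}\bigl(\Ad(g)\,s\bigr)\ =\ \bar g \cdot \kap_{\bar\b, \b}(s)\qquad \text{in } \bbo(\b),
\]
where the $T$-action on $\bbo(\b) \subset \fa(\b)$ is the standard weight action.

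\medskip

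The hard point, and the main obstacle, is this last identity, which I intend to extract from a $B$-equivariance property of $\kap_{\bar\b, \b}$. By \eqref{bbo-bco} one has $\kap_{\bar\b, \b} = \kap_+ \circ \kap_-^{-1}$, factored through the intermediate space $\fd(\bar\b) \cap \u(\b)$. For any $b \in B$, conjugation by $b$ preserves $\u(\b)$ and carries $\fd(\bar\b)$ to $\fd(\Ad(b)\bar\b)$, so it restricts to an isomorphism $\fd(\bar\b) \cap \u(\b) \iso \fd(\Ad(b)\bar\b) \cap \u(\b)$ intertwining $\kap_\pm$ with the analogous maps for $\Ad(b)\bar\b$. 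Because the $B$-action on the abelian quotient $\fa(\b) = \u(\b)/[\u(\b), \u(\b)]$ factors through $B/[B,B] = T$, this will yield
\[
\kap_{\Ad(b)\bar\b,\,\b}\bigl(\Ad(b)\,s'\bigr)\ =\ \bar b \cdot \kap_{\bar\b, \b}(s'),\qquad \forall\, b \in B,\ s' \in \bco(\bar\b).
\]
Specializing to $b = g$ and $s' = s$ then produces the identity above and completes the proof.
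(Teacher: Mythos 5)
Your proposal is correct and follows essentially the same route as the paper: the same pointwise formula $\kap_Z(\bar\b,\b,x)=\kap_{\bar\b,\b}(x\ \op{mod}\ \bar\b)\times(\b,x)$, the same appeal to Lemma \ref{bk-lem} to place $G_x$ inside $B$, and the same reduction of the square to the identity $\kap_{\Ad(g)\bar\b,\b}(\Ad(g)s)=\vkap_{\b,x}(g)\cdot\kap_{\bar\b,\b}(s)$. The paper verifies that identity by writing $x=h+u$ for the triangular decomposition attached to $\b\cap\bar\b$ and tracking $u=\kap_-\inv(s)$, which is precisely the concrete form of the $B$-equivariance of $\kap_\pm$ that you invoke; the only point you gloss over is the scheme-theoretic (as opposed to pointwise) construction of $\kap_Z$, which the paper obtains by factoring through the isomorphism $\tbm\times_\bb\Om\cong\Om\times_\bb\tb$ of Lemma \ref{closed}(iii).
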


\begin{proof} We first define  $\kap_Z$ pointwise.
To this end, we use the map $\kap_{\bar\b,\b}$ from \eqref{bbo-bco}
and let $\kap_Z$ be given by the following assignment:
\beq{kxyz}\kap_Z:\ \wtx\ \to\ (\T^*\tb)_r=\tgr \times_{\bb}\tb,\quad
(\bar{\b}, \b ,x)\ \longmapsto\  \big((\b ,x),\,
(\b, \kap_{\bar{\b},\b }(x\mod\bar{\b})\big).
\eeq

To define this map scheme theoretically, 
we use   the isomorphisms $p_\Om\times p$ and $p_-\times p_\Om$
 from \eqref{wtx} and put $u:=(p_\Om\times p)\ccirc(p_-\times p_\Om)\inv$.
We obtain the following  commutative diagram:
\[
\xymatrix{
&\tbm\times_\bb\Om\ \ar[rr]^<>(0.5){u}_<>(0.5){\cong}&&\ \Om\times_\bb\tb\ \ar@{->>}[rr]^<>(0.5){pr_\tb}&&
\ \tb\ \ar[dr]^<>(0.5){\varrho}&\\
Z\ \ar[ur]^<>(0.5){\wt\pi}\ar@{.>}[rrr]^<>(0.5){\kap_Z}\ar@{=}[dr]&&&\ (\T^*\tb)\reg\ \ar@{=}[r]\ &
\ {\tgr\times_\bb\tb\en\hphantom x}\
\ar[ur]\ar[dr]^<>(0.5){\wt\varrho}\ar@{}[rr]|{\en\ \Box}&& \ \overset{}\bb\\
&\ \oox\times_{\g_r}\tgr\ \ar@{->>}[rrrr]^<>(0.5){pr_\tgr}&&&&\ \tgr\ \ar[ur]&
}
\]
In this diagram, the composite map 
$(pr_\tb\ccirc u\ccirc\wt\pi)\times pr_\tgr:\ Z\to \tb\times \tgr$
 (along the upper and lower halves of the perimeter) 
factors  through a dotted map
$Z\to \tgr\times_\bb\tb$. We let this map be
 the required morphism $\kap_Z$.

We must prove that the diagram in the statement of Proposition \ref{imb}
commutes. All schemes being reduced, it is sufficient to check this pointwise.
Thus, let $(\bar{\b}, \b ,x)\in \wtx$ and  put ${\mathfrak h}:=\b \cap\bar{\b}$.
This is a Cartan subalgebra, by Proposition \ref{genpos}, and one has
a triangular decomposition
$\g=\u \oplus{\mathfrak h}\oplus\bar{\u}$,
where $\u:=\u(\b), \bar\u:=\u(\bar\b)$.
Let $B$, resp. $U$ and $H$, be the group
corresponding to $\b$, resp. $\u$ and ${\mathfrak h}$.
Fix  $g\in G_x$. We know that
$G_x\sset B$, so conjugation by $g$
produces
a triangular decomposition of the form
$\g=\u \oplus  g({\mathfrak h})\oplus  g(\bar{\u})$.

Since $x\in \b\cap (\bco(\bar\b)+\bar\b)$,
one can write 
$x=h+u$, where $h\in{\mathfrak h}$ and $u$ is a nilpotent  element contained in the open
$H$-orbit in $\u\cap\fd(\bar\b)$.
Note that since $x$ is fixed by $g$, we have
$x= g(h)+ g(u)$, where $ g(h)\in  g({\mathfrak h})$ and $ g(u)\in  g(\u \cap\fd(\bar\b))=\u \cap\fd(g(\bar\b))$.
Put $\u'=[\u,\u]$. Thus, $x\mod\bar\b\in \bco(\bar\b)$,
resp. $u\mod\u'\in \bbo(\b)$.  Going through the
construction of the map $\kap_{\bar{\b},\b }$,
one finds that $\kap_{\bar{\b},\b }(x\mod\bar\b)=u\mod \u'$,
resp.
$\kap_{g(\bar{\b}),\b }(x\mod g(\bar{\b}))= g(u)\mod g(\u')= g(u)\mod \u'$.
Now, write $z: y\mto z\star_{_\fZ}y$, resp. $t: y\mto t\star_{_T} y$, for the action of  
$G_x$ on $\mum\inv(x)$ and $\pi\inv(x)$, 
resp.  $T=B /U$ on $\u /\u'$.
By definition, one has $\vkap_{\b ,x}(g)=g\mod U$.  Writing $\kap_{\b,x}$ for the restriction
of the map $\kap_Z$ to the fiber $\wt\mu\inv(\b,x)$, cf. \eqref{cartZ}, we compute:
\begin{align*}
\kap_{\b,x}(g\star_{_\fZ}\bar\b,\,g\star_{_\fZ}\b,  g(x))&=\  \kap_{\b,x}(g\star_{_\fZ}\bar\b,\b,x)
\ =\ \kap_{g(\bar{\b}),\b }(x\mod g(\bar{\b}))\ =\  g(u)\mod \u' \\
& =\
(g\mod U )\star_{_T} (u\mod \u' )\ =\ \vkap_{\b ,x}(g)\star_{_T}\kap_{\bar{\b},\b }(x).\qedhere
\end{align*}
\end{proof}

Let $T_\tgr\times^{\pi^*\fZ} Z$ be a pushout of the $\pi^*\fZ$-torsor $Z$ via 
the morphism $\vkap$. Thus, $T_\tgr\times^{\pi^*\fZ} Z$ is a $G\times W$-equivariant
$T_\tgr$-torsor.
It follows from Proposition \ref{imb} and the universal property of pushouts, that
the morphism $\kap_Z$ induces a morphism
\beq{T-tors}
\kap:\ T_\tgr\times^{\pi^*\fZ} Z\too (\T^*\tb)_r,\quad
\ \big(t,\ (\bar{\b}, \b ,x)\big)\  \mto\
t\cdot\kap_Z(\bar{\b}, \b ,x),
\eeq
of $G$-equivariant $T$-torsors on $\tgr$. 
Since
any morphism of torsors is an isomorphism, we obtain
\begin{prop}\label{tors-thm} The map $\kap$ is an isomorphism of $G$-equivariant
$T$-torsors on $\tgr$. \qed
\end{prop}

\begin{proof}[Proof of Theorem \ref{thm*}]
We view $T\times \t^*$ and $\xx$  as schemes over $\fc$ via the maps
$T\times \t^*\to \t^*\to\fc$
and $\xx\to \g_r\to \fc$, respectively, cf. Lemma \ref{torsor}.
We find
\begin{align}
 T_\tgr\times_\tgr Z
     \ccong(T\times \tgr)\times_\tgr  (\xx\times_\fc\t^*)
  \ =\ (T\times \t^*)\times_\fc\xx\ =\ \T^*T\times_\fc\xx.\label{TZ}
\end{align}

Now, we have  a diagram
$T_\tgr\times_\tgr Z\xrightarrow{g}
T_\tgr\times^{\pi^*\fZ} Z\xrightarrow{h}\g_r$, where
the map $g$ is an $(h\ccirc g)^*\fZ$-torsor by construction.
Using the isomorphism $\kap$ of Proposition \ref{tors-thm}
 we obtain a chain of  $G$-equivariant morphisms
\beq{star1} \T^*T\times_\fc\xx\  \xrightarrow[\cong]{\eqref{TZ}}\
  T_\tgr\times_\tgr Z \   \xrightarrow{g}\
  T_\tgr\times^{\pi^*\fZ} Z\  \xrightarrow[\cong]{\kap\inv}\
  (\T^*\tb)_r\  \stackrel{j}\into \ \T^*\tb\   \xrightarrow{\mu_{\T^*\tb}}\
  \g\ \xrightarrow{\vth}\ \fc,
  \eeq
where  $j: (\T^*\tb)_r\into \T^*\tb$ is an open imbedding.
Let  $\underline{\mu}: \T^*\tb\xrightarrow{{\mu_{\T^*\tb}}}\g^* \xrightarrow{\vth}\fc$, resp.
$\underline{\kap}:   \T^*T\times_\fc\xx\to(\T^*\tb)_r$, be
the composite map.
  We obtain  the following chain of $G$-equivariant  algebra maps

\beq{tors map}
\C[\fc]\ \xrightarrow{\underline{\mu}^*}\ \C[\T^*\tb]\ \xrightarrow{j^*} \ \C[(\T^*\tb)_r]\
\xrightarrow{\underline{\kap}^*}
\C[\T^*T\times_\fc \xx] \ccong \C[\T^*T]\otimes_{\C[\fc]} \C[\xx].
\eeq
where 
the  isomorphism on the right holds since the varieties $\T^*T, Z$, and $\fc$
are affine.

Since the map $g$ is an $(h\ccirc g)^*\fZ$-torsor,
the map $\underline{\kap}$ is
a  $\underline{\mu}^*\fZ_\fc$-torsor.
Therefore, the algebra map $\underline{\kap}^*$
induces an isomorphism
$\C[(\T^*\tb)_r]\iso \C[\T^*T\times_\fc \xx] ^{\underline{\mu}^*\fZ_\fc}\,=\,
\C[\T^*T\times_\fc \xx] ^{\fZ_\fc}$.

  We claim next that the map $j^*$    is an isomorphism.
   To prove this, we use that  $\tg\sminus \tgr$ has codimension $\geq2$ in $\tg$,
see e.g.
\cite[Proposition 1.9.3]{BR}.
Since $\T^*\tb$ is a $T$-torsor over $\tg$, we conclude that $(\T^*\tb)\sminus (\T^*\tb)_r$ 
has codimension $\geq2$ in $\T^*\tb$, and the claim follows.

Combining everything together  we obtain $G$-equivariant
isomorphisms of $\C[\fc]$-algebras 
\[
  \C[\T^*\tb]\ \xrightarrow[\cong]{(j\ccirc\underline{\kap})^*}\
  \C[\T^*T\times_\fc \xx] ^{\fZ_\fc}\ccong
  \big(\C[\T^*T]\otimes_{\C[\fc]} \C[\xx]\big)^{\fZ_\fc}.
  \]

To complete the proof,  let $Y$ be an arbitrary 
symplectic manifold  equipped
with a Hamiltonian $\fZ_\fc$-action with moment map $\mu_Y$,
and let $\C[Y]^{\C[\fc]}$  be the Poisson centralizer
of $\mu_Y^*(\C[\fc])$ in $\C[Y]$. Then, equation \eqref{df}
and the fact that the group scheme $\fZ_\fc$ is connected
imply that one has
$\C[Y]^{\fZ_c}=\C[Y]^{\C[\fc]}$. Applying this in the  case
$Y=\T^*T\times_\fc \xx$, from the previous isomorphisms we deduce
\beq{final iso}
 \C[\T^*\tb]\ccong
  \C[\T^*T\times_\fc \xx] ^{\C[\fc]}\ccong
  \big(\C[\T^*T]\otimes_{\C[\fc]} \C[\xx]\big)^{\C[\fc]}.
  \eeq
 The theorem follows since $\T^\psi(G/\bU)=\T^\psi\tb_-=\xx$ by definition, see \eqref{psix}.
\end{proof}

\begin{proof}[Proof of Corollary \ref{Wcor}] 
The Weyl group acts on $T_\tgr$ and on $Z$.
We transport the diagonal $W$-action on $T_\tgr\times^{\pi^*\fZ} Z$ via the isomorphism
$\kap$. The resulting   $W$-action on $(\T^*\tb)_r$ induces a
  $W$-action on $\C[\T^*\tb]=\C[(\T^*\tb)_r]$ by algebra automorphisms.

To prove that 
the  above defined  $W$-action on $(\T^*\tb)\aff$ agrees
with  the one constructed in
\cite[Proposition 5.5.1]{GR},
it is sufficient to show that the two actions agree over the locus of regular semisimple elements.
This is easy to check  by comparing the constructions of these actions.
\end{proof}

\begin{cor}\label{kz_fibers} In the setting  of  Lemma  \ref{bk-lem} the following holds:

\vi One has $\Ker\vkap_{\b,x}=U_x$,
resp.  $\im\vkap_{\b,x}$  equals the image of the group $Z(G_h)$ in ~$T=B/[B,B]$.

\vii If $s\in \bbo(\b)$ is such that the fiber $\kap_Z\inv(\b,x,s)$ is nonempty, then
this fiber is a $U_x$-torsor.
\end{cor}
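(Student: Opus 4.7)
The plan is to prove part (i) by a direct group-theoretic computation from Lemma~\ref{bk-lem}, and then to deduce part (ii) as a formal consequence of part (i) combined with the morphism-of-torsors picture provided by Proposition~\ref{imb} and diagram~\eqref{2tors}.

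For (i), I first check that $U_x \subseteq \Ker \vkap_{\b,x}$: every element of $U_x$ is unipotent and lies in $B$, hence in the unipotent radical $[B,B] = U$, so maps to $1 \in T = B/[B,B]$. For the reverse inclusion, suppose $g \in G_x$ satisfies $\vkap_{\b,x}(g) = 1$, i.e.\ $g \in U$. Using the decomposition $G_x = Z(G_h) \cdot U_x$ of Lemma~\ref{bk-lem}, write $g = z u$ with $z \in Z(G_h)$, $u \in U_x$; then $z = g u^{-1}$ lies in $U$. Since $Z(G_h)$ is a torus (in particular, consists of semisimple elements) and $U$ is unipotent, the intersection $U \cap Z(G_h)$ is trivial, so $z = 1$ and $g = u \in U_x$. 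This gives $\Ker \vkap_{\b,x} = U_x$. The image statement is then immediate: $\vkap_{\b,x}(G_x) = \vkap_{\b,x}(Z(G_h) \cdot U_x) = \vkap_{\b,x}(Z(G_h))$, the image of $Z(G_h)$ in $T$.

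For (ii), I invoke the commutative square of Proposition~\ref{imb}. Fix $(\b,x) \in \tg_r$. By \eqref{2tors}, the fiber of $\mu_Z$ over $(\b,x)$ is a torsor for the group $\pi^*\fZ|_{(\b,x)} = G_x$, and the fiber of $p_\tg$ over $(\b,x)$ is a torsor for $T_\tgr|_{(\b,x)} = T$. The morphism $\kap_Z$ restricts between these fibers to a morphism of torsors equivariant along the group homomorphism $\vkap_{\b,x} \colon G_x \to T$. For any such morphism of torsors, the preimage of a single point of the target is either empty or a torsor for $\Ker \vkap_{\b,x}$. Substituting the computation of (i), the fiber $\kap_Z^{-1}(\b,x,s)$, when nonempty, is a $U_x$-torsor.

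The only non-formal step is (i), and its sole subtlety is the simultaneous use of the unipotence of $U_x$ (to place it in $U$, not merely in $B$) and the reductivity/connectedness of $Z(G_h)$ (to rule out semisimple elements in $U$); both are precisely the content of Lemma~\ref{bk-lem}. Once (i) is in place, part (ii) is purely torsor bookkeeping and presents no further obstacle.
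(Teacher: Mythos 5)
Your proof is correct and follows essentially the same route as the paper's (which disposes of both parts in two sentences): part (i) is read off from the decomposition $G_x = Z(G_h)\cdot U_x$ of Lemma~\ref{bk-lem} by noting that unipotents of $B$ lie in $U=[B,B]$ while $Z(G_h)$ is a torus and hence meets $U$ trivially, and part (ii) is the torsor bookkeeping coming from the $\vkap$-equivariance of $\kap_Z$ expressed in Proposition~\ref{imb}. One small remark: you deduce (ii) directly from the equivariant square in Proposition~\ref{imb}, whereas the paper appeals to the fact that the pushout map~\eqref{T-tors} is an isomorphism; your version is a notch more economical since it only uses equivariance, not bijectivity, and the two are of course equivalent once one knows that a morphism of $T$-torsors is an isomorphism.
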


\begin{proof} Part (i) is immediate from  Lemma  \ref{bk-lem}. Part (ii) follows from (i)
using that the map \eqref{T-tors} is an isomorphism.
\end{proof}

\subsection{}\label{pf*} 
Recall the
map $q:\xx\to\tbm$, see \eqref{moment},
and let $pr_\tb: \T^*\tb\to\tb$ be the natural projection.
We also  consider the  map
 $\kz=\wt\pi\times \kap: Z \to \xx\times \T^*\tb$.
By construction, we have $\im\kz\sset\xx\times(\T^*\tb)_r$.
Formula \eqref{kxyz} shows that image of the composite map $(q\times pr_\tb)\ccirc\kz:\
Z \to \xx\times \T^*\tb\to \tbm\times\tb$, equals $\Xi$.
In terms of  \eqref{zz2}, we obtain the following diagram:
\beq{Y}
\xymatrix{
\oox=G\times_{U_\bbf} (\bbe+\bbb)\ \ar[d]^<>(0.5){q} &&
Z=G\times (\bbe+\g_\bbh)
\ \ar[ll]_<>(0.5){\wt\pi}
\ar[rr]^<>(0.5){\kap}\ar[d]^<>(0.5){(q\times pr_\tb)\ccirc\kz}&& \ \T^*(G/U_\bbe)=G\times_{U_\bbe}
  \b_\bbe \
\ar[d]^<>(0.5){pr_\tb }\\
\tbm=G/{U_\bbf} && \ \Xi=G\ \ar@{->>}[ll]_<>(0.5){p_-}
\ar@{->>}[rr]^<>(0.5){p}&&
\ \tb=G/U_\bbe
}
\eeq
In this diagram, the map $\wt\pi$ is induced
by the inclusion $\bbe+\g_\bbh\into \bbe+\bbb$,
resp. the map $\kap$ is induced
by the inclusion $\bbe+\g_\bbh\into \b_\bbe$.  
Further, the maps $\kz$ and $(q\times pr_\tb)\ccirc\kz$ take the following form:
\[
\xymatrix{
Z=G\times (\bbe+\g_\bbh)
\ar@{^{(}->}[rrr]^<>(0.5){\kz:\en (g,x)\,\mto\,(g,x,x)}&&&
(q\times pr_\tb)\inv(\Xi)=G\times (\bbe+\bbb)\times\b_\bbe
\ar@{->>}[rrr]^<>(0.5){q\times pr_\tb:\ (g,x,y)\,\mto\,g}&&&  G.
}
\]

\begin{prop}\label{lag}
The map $\kz$ is a
$G$-equivariant closed imbedding that makes the first projection
$Z\to \oox$  a finite morphism and the second  projection
 $Z\to \T^*\tb$ a birational isomorphism.
Furthermore, the  map $(q\times pr_\tb)\ccirc\kz:\ Z\to \Xi\ $
is a fibration  with  affine-linear fibers.
\end{prop}

\begin{proof}  
We claim  first that   the restriction
of the map  $Z\to \T^*\tb$  to the  regular semisimple locus
is an isomorphism. To see this, consider the following maps
\[
\xymatrix{
Z_{rs}\ \ar[rr]^<>(0.5){z\ \mto\ (1, z)}&&
\ (\pi^*\fZ\times^{\pi^*\fZ} Z)_{rs}\ 
\ar[rr]^<>(0.5){(\vkap|_{\tg_{rs}})\times\Id_Z}&&
\ (T_\tgr\times^{\pi^*\fZ} Z)_{rs}\ 
\ar[r]^<>(0.5){\kap|_{rs}}&
\ (\T^*\tb)_{rs}.
}
\]
The first map above  is the tautological isomorphism.
As we have mentioned in the proof of 
Lemma \ref{imvkap},  the second map above
is  easily seen to be an isomorphism.
 Finally, the third map
is  an isomorphism by Proposition \ref{imb}.
We conclude that the map  $Z\to \T^*\tb$ is a birational isomorphism.
Furthermore, the left cartesian square in \eqref{cartZ} implies that  the first projection
$Z\to \oox$ is a finite morphism.

The scheme $\wt\Xi:=(q\times pr_\tb)\inv(\Xi)$ is closed in $\oox\times \T^*\tb$
since
the orbit $\oom$ is  closed in $\tbm\times\tb $,
by Lemma \ref{closed}. 
Thus, proving that $\kz$ is a closed imbedding
reduces to showing that so is the map $Z\to \wt\Xi$.
We  identify $\Xi=G$ and use the explicit formulas for
the maps $\kz$ and $q\times pr_\tb$ given above.
The fiber of the projection $\wt\Xi\to\Xi$,
resp.  of the map
$(q\times pr_\tb)\ccirc \kz$, over an element $g\in G=\Xi$
may be identified with $(\bbe+\b_\bbf)\times \b_\bbe$, resp.
$\bbe+\g_\bbh$. Then, the restriction, $\kz_1$, of the map $\kz$ to the fiber 
becomes the diagonal imbedding
$\kz_1: \bbe+\g_\bbh\into (\bbe+\b_\bbf)\times \b_\bbe$, which is 
a closed imbedding. It follows that $\kz$  is 
a closed imbedding.

Next, we note that  $(\bbe+\b_\bbf)\times \b_\bbe$ has the natural structure of an affine-linear space,
a fiber of the twisted cotangent bundle on $\tbm\times\tb$.
The image of the map $\kz_1$  is an affine-linear
subspace of that  affine-linear space.
Thus, the map $(q\times pr_\tb)\ccirc\kz:\ Z\to \Xi$ is an affine-linear fibration.
\end{proof}

\begin{rem} \label{lagr-corr}
The symplectic structures on $\T^\psi\tbm$ and $\T^*\tb $
give  ${\T^\psi\tbm\times \T^*\tb}$ a natural symplectic structure.
With that structure,  we can  view $\T^\psi\tbm\times \T^*\tb $  as 
a twisted cotangent bundle on $\tbm\times\tb$.
We use the isomorphism $\xx\cong \T^\psi\tbm$ and identify
$\xx\times \T^*\tb$ with  $\T^\psi\tbm\times \T^*\tb$.
Then, one can show that     $q\times pr_\tb:Z\to \Xi$
is a twisted  conormal bundle on $\Xi$.
In particular, the Miura variety $Z$ is  a smooth Lagrangian 
correspondence between the symplectic manifolds $\T^\psi\tbm$ and $\T^*\tb$.
\end{rem}

\subsection{The case of a simply connected group}\label{sc_sec} 
In this subsection, we explain how to adapt the constructions of previous sections
to a `simply connected' setting where the adjoint group $G$ is replaced by $G\sc$,
a simply connected cover of $G$. 
To this end, let $\om_i,\ i\in I$, be the   fundamental weights of $\g$,
and ${\mathbb{X}}^*$ the weight lattice.
For each $i\in I$, we fix an irreducible  finite dimensional representation $V_i $ of $\g$,
 with highest weight
$\om_i$ (such a representation is defined uniquely up to a noncanonical isomorphism). 
Associated with  every Borel $\b$, there is a canonical $\b$-stable filtration
$V_i ^{\geq\mu,\b},\ \mu \in {\mathbb{X}}^*$,  such that 
$\gr^{\mu,\b} V_i :=V_i ^{\geq\mu,\b}/V_i ^{>\mu,\b}$ is a $\mu$-weight space for the natural action of 
the universal Cartan algebra $\t=\b/[\b,\b]$. 
The highest weight space $V_i ^{\geq\om_i,\b}$ is
the line $V_i ^{\u(\b)}$  formed by the vectors killed by $\u(\b)$. Dually, one has a line
$\gr^{w_0(\om_i),\b} V_i =V_i /\u(\b)V_i $,
where $w_0\in W$ is the longest element.
Let $\ooo^i(\b)\sc=V_i ^{\geq\om_i,\b}\sminus\{0\}$, resp. $\ooo^i_-(\b)\sc=
\gr^{w_0(\om_i),\b} V_i \sminus\{0\}$.
The action of $T\sc$,
the abstact maximal torus of $G\sc$, makes $\ooo(\b)\sc:=\prod_i\ \ooo^i(\b)\sc$, 
resp. $\ooo_-(\b)\sc=\prod_i\ \ooo^i_-(\b)\sc$,
a $T\sc$-torsor.
Similarly to \S\ref{inv}, we define the following $G\sc$-equivariant $T\sc$-torsor on $\bb$:
\[
\wt\bb\sc:=\{(\b, s)\ \mid  \b\in \bb, \en s\in\bbo(\b)\sc\},\resp
\tbm\sc:=\{(\b, s)\ \mid  \b\in \bb, \en s\in\bbo_-(\b)\sc\}.
\]

For  every $i\in I$ and a pair  $\b,\bar{\b}\in \bb$ of Borel subalgebras in opposite position,
the composite map $V_i ^{\u(\b)}\into V_i \onto V_i /\u(\bar\b)V_i $ is
an isomorphism. An inverse of this map induces an isomorphism
\[\bco^i(\bar{\b})\sc= (\gr^{w_0(\om_i),\bar{\b}} V_i )\sminus\{0\}\
\iso\
\bbo^i(\b)\sc=V_i ^{\geq\om_i,\b}\sminus\{0\}.
\]
We obtain an isomorphism
$\kap_{\bar{\b},\b}\sc: \bco(\bar{\b})\sc\iso \bbo(\b)\sc$,
 such that $\kap_{\bar{\b},\b}\sc(ts)=w_0(t)\inv\kap_{\bar{\b},\b}\sc(s)$.
We define
\[
\Xi\sc\ :=\ \{(\bar{\b},\bar{x},\b,x)\in \tbm\sc \times\tb\sc\mid
(\b ,\bar{\b})\in \Om,\ \kap_{\bar{\b},\b }\sc(\bar{x})=x \}.
\]

Let
$p_\Om\sc: \Xi\sc\to \Om,\,(\bar{\b},\bar{x},\b,x)\mto(\bar{\b},\b)$,  resp.
$p_-\sc: \Xi\sc\to \tbm\sc,\,(\bar{\b},\bar{x},\b,x)\mto(\bar{\b},\bar{x})$, and
$p\sc:\Xi\sc\to\tb\sc,\,(\bar{\b},\bar{x},\b,x)\mto(\b,x)$, be the natural projection.
Mimicing Section \ref{obb}, one shows
that  $\Xi\sc$ is a $G\sc_\Delta$-torsor
and one has the following isomorphisms, cf.  \eqref{wtx},
\[
\xymatrix{
\tbm\sc\times_{\bb}\bbm\ &&\ 
\oom\sc\ \ar[ll]_<>(0.5){p_-\sc\times p_\Om}
\ar[rr]^<>(0.5){p_\Om \times p\sc} &&\
\bbm\times_{\bb}\tb\sc .}
\]

Let  $T\sc$ act on $\tbm\sc \times\tb\sc$ via the $T\sc$-action
on the second factor $\tb\sc$ (along the fibers of  $\tb\sc\to\bb$).
The actions of $G\sc_\De$ and $T\sc$  make $\tbm\sc \times\tb\sc$
a $G\sc_\De\times T\sc$-variety.
Let  $\wt\Om\sc$ be the preimage of $\Om$ under the map $\tbm\sc \times\tb\sc\to\bb\times\bb$.
The variety  $\wt\Om\sc$  is $G\sc_\De\times T\sc$-stable
and it contains $\Xi\sc$.
Furthermore, the map $\Xi\sc\times T\sc\to \wt\Om\sc$
induced by the $T\sc$-action 
is  a $G\sc_\Delta\times T\sc$-equivariant isomorphism,
cf. Lemma \ref{closed}(i).

Since  any Borel subgroup $B$ of $G\sc$ contains $Z(G\sc)$,
the center of  $G\sc$,
there is  a
canonical imbedding $Z(G\sc)\into B/[B,B]=T\sc$. Furthermore, 
we have  canonical
isomorphisms  $G\sc/Z(G\sc)\iso G$, resp. $T\sc/Z(G\sc)\iso T$.

Recall the notation of \S\ref{inv}. 
For every $i\in I$ inside the lattice ${\mathbb{X}}^*$, we have an  equation
$\al_i=\sum_{j\in I}\
\langle \al_j^\vee,\al_i\rangle \cdot\vpi_i$, where
 $\al_j^\vee$ denotes the simple coroot associated 
with $j\in I$. We may (and will) choose,   for  some Borel subalgebra $\b$ and
every $i\in I$, an
 isomorphism:
\[\gr^{\al_i,\b}\g\ccong \bo_{j\in I}\
\big(V_i ^{\u(\b)}\big)^{\o \langle \al_j^\vee,\al_i\rangle},
\]
of 1-dimensional $T\sc$-modules (of the same weight).
Combining these  isomorphisms together we obtain  a  $T\sc$-equivariant morphism
\[
\bbo(\b)\sc=\prod_{i\in I}\ \bbo^i(\b)\sc\too  
\bbo(\b)=\prod_{i\in I}\ \bbo^i(\b),
\quad (s_i)_{i\in I}\ \mto\ \Big(\mbox{$\underset{j}\bigotimes\ s_i^{\o \langle \al_j^\vee,\al_i\rangle}$}\Big)_{i\in I}.
\]

The  morphism above descends to an isomorphism
$\bbo(\b)\sc/Z(G\sc)\iso\bbo(\b)$, of $T$-torsors. Furthermore, this isomorphism
for the chosen Borel $\b$ induces  an
isomorphism
$\tb\sc/Z(G\sc)\iso\tb$, of $G$-equivariant $T$-torsors
on $\bb$. 
A similar construction yields an
isomorphism
$\tbm\sc/Z(G\sc)\iso \tbm$, of $G$-equivariant $T$-torsors
on $\bb$.

We  define 
$\xx\sc:= \xx\times_{\tbm}\tbm\sc$, resp.
$Z\sc:=\xx\sc\times_{\g_r}\tgr$.
Let $\mu_\xx\sc$ be the composition $\xx\times_{\tbm}\tbm\sc\to \xx\to\g$.
With these definitions, there is an analogue of 
the diagram from  the proof of Proposition \ref{imb} that provides the  construction
 of a  morphism 
\[\kap_Z\sc: Z\sc\to \tgr\times_{\bb}\tb\sc\cong \T^*(\tb\sc),
\]
of $G\sc$-equivariant schemes on $\tgr$.

Next,
one  has the
 universal centralizer group scheme
$\dis \fZ\sc :=\{(g,x)\in G\sc\times\g_r\mid \Ad g(x)=x\}$.
Using the $G\sc$-equivariant morphism  $\mu_\xx\sc: \xx\sc\to\g_r$,  one gets a canonical
action 
$\fZ\sc\times_{\g_r} \xx\sc \to \xx\sc$.
There is a simply-connected analogue $\vkap\sc: \pi^*\fZ\sc\to T\sc_\tgr$,
of the map from Lemma \ref{imvkap}.
One checks
that the  simply-connected counterpart of the diagram of Proposition \ref{imb} commutes.
This implies an analogue of Proposition \ref{tors-thm}, i.e., 
an  isomorphism
\beq{tors-sc-thm}\kap\sc:\ T\sc_\tgr\times^{\pi^*\fZ\sc} Z\sc\ \iso\ (\T^*(\tb\sc))_r,
\eeq
of $G\sc\times T\sc$-equivariant schemes on $\tgr$.

We let $W$ act on $\xx\sc\times_\fc\t$ via its action on the second factor.
This gives, using the isomorphism $\xx\sc\times_{\g_r}\tgr\cong \xx\sc\times_\fc\t$,
 a  $W$-action on $Z\sc$.
Thus, one can transport the diagonal $W$-action on the variety in the LHS
of \eqref{tors-sc-thm}
to obtain a  $W$-action on $(\T^*(\tb\sc))_r$.

\section{The Miura bimodule}\label{6}

\subsection{} \label{mi-sec}
Given a Lie algebra $\fk$  and a
left, resp. right, $\fk$-module $E$,
we write $E^\fk$ for the space of $\fk$-invariants
and use simplified notation 
 $\fk\back E=E/\fk E$, resp. $E/\fk=E/E\fk$,
for $\fk$-coinvariants.
Similarly, given a second  Lie algebra $\fk'$ and a
$(\UU\fk',\UU\fk)$-bimodule
$E$,  we write  $\fk'\back E/\fk=
E/(E\fk+\fk'E)$.

Below, we view $\ug$, resp. $\ut$, as the algebra of  left invariant, resp. invariant,
differential operators on $G$, resp. $T$.
Fix a Borel subgroup $B$ with unipotent radical $U$. Let 
$\b=\Lie B,\, \u=\Lie U$. Since $\u$ is a Lie ideal of $\b$ and the Lie algebra $\t=\b/\u$ is abelian,
we have $\ut=\UU\b/\u=\u\back\UU\b=(\u\back\UU\b)^\u=
((\u\back\UU\b)^\u)^{\t}$,
where $(-)^\t$ stands for invariants of the adjoint $\t$-action.
We obtain
a chain of algebra imbeddings
\beq{hcdef}
\ut\ =\ ((\u\back\UU\b)^\u)^{\t}\ \into
\ ((\u\back\ug)^\u)^{\t}\into (\u\back\ug)^\u\ \into\  (\u\back\dd(G))^{\u}.
\eeq

We put $\dds:= (\u\back\dd(G))^{\u}$, let
$a:\ut\to\dds$ be the composite of the maps above,
and $\text{proj}: \UU\b\onto \ut=\u\back\UU\b$
the projection.
Thus, one has the following diagrams of algebra maps
\[
\xymatrix{
  \dd(G)\  &\  \UU\b\  \ar[l]\ar[rr]^<>(0.5){_{i_T\ccirc\text{proj}}}
&&\   \dd(T),\quad\text{resp.}\quad\dds\  &\  \ut\  \ar[l]_<>(0.5){a}
\ar[r]^<>(0.5){i_T}&\   \dd(T).
}
\]

We apply the construction of Hamiltonian reduction in the setting of Example \ref{examps}(ii) for
the triple $A_1=\dd(T),\, A_2=\dd(G),\,Z=\uub$, resp.
$A_1=\dd(T),\, A_2=\dds,\,Z=\ut$,
and the maps on the left, resp. right, of the  diagram above.
We obtain the following algebras
\beq{algs}\big(\dd(T)\ \bo_{\uub}\ \dd(G)\big)^{\b}\ccong \big(\dd(T)\ \bo_{\ut}\ (\u\back\dd(G))^{\u}\big)^\t\ =\
(\dd(T)\ \bo_{\ut}\ \dds\big)^\t,
\eeq
where the isomorphism on the left results from the fact that one can perform Hamiltonian reduction
in stages: first with respect to the Lie algebra $\u$ and then
with respect to $\t=\b/\u$. 
Hamiltonian reduction  with respect to  $\u$ does not affect $\dd(T)$, the first tensor factor, since
the map $\b\to\dd(T)$ factors through $\b/\u\to\dd(T)$.
The space $\dd(T)\,\o_{\uub}\,\dd(G)=\dd(T)\, \o_{\ut}\, (\u\back\dd(G))$ has the structure of 
a left $(\dd(T)\, \o_{\uub}\, \dd(G))^{\uub}$-module via an `inner' action, and also of a
right $\dd(T)\opp\o\dd(G)$-module via an `outer' action, see Example ~\ref{examps}(ii).

Fix an opposite Borel subgroup $\bar B$ with Lie algebra $\bar\b$.
Let $\bU=[\bar B,\bar B]$, resp. $\bu=[\bar\b,\bar\b]$.
In \S\ref{ss1.2} we have introduced the Lie subalgebra 
$\up\subset\dd(G)$ and the ring  $\dd^\psi(G/\bU):=(\dd(G)/\up)^\up$.
The two-sided quotient $\u\back\dd(G)/\up=
(\u\back\dd(G))/\up$ $=\u\back(\dd(G)/\up)$ has the natural structure of a $(\dds,\ddp)$-bimodule.

Let the Lie algebra $\up$  act on $\dd(T)\,\o\, \dd(G)$  by 
$x:\,(u_T\o u_G)\mto u_T\o (u_G\cdot x)$.
We
define the Miura bimodule  as follows:
\begin{align}
\BM\ :=\ \big(\dd(T)\,\o_{_{\uub}}\,\dd(G)\big)/\up
&\ =\
\big(\dd(T)\, \o_{_{\ut}}\,(\u\back\dd(G))\big)/\up\label{miu1}\\
 &\ =\
\dd(T)\,\o_{_{\ut}}\,(\u\back\dd(G)/\up).\nonumber
\end{align}
The `inner' action of the algebra $(\dd(T)\, \o_{\uub}\, \dd(G))^{\b}$ on  $\dd(T)\,\o_{\uub}\,\dd(G)$
survives in $\BM$ and it makes $\BM$ a left module over any of the algebras in
\eqref{algs}.
The  `outer' action of $\dd(T)\opp\o\dd(G)$ descends to a right action  of the 
algebra $\dd(T)\opp\o\ddp$, on $\BM$.   The right action commutes with the left action;
furthermore, it
makes $\BM$ a $(\dd(T),\,\ddp)$-bimodule.
Let $1_\BM\in\BM$ be the image of the element $1\o 1\in\dd(T)\otimes \dd(G)$ in $\BM$.

Recall that the Harish-Chandra homomorphism $\hc: Z\g\to \ut$,
where  $Z\g$ denotes the center of the algebra $\ug$,
may be defined as follows.
First, one observes that the image of $Z\g$
under the projection $pr: \ug\to\u\back\ug$ 
is clearly contained in the subspace
$((\u\back\ug)^\u)^{\ad\t}$. On the other hand,
weight considerations yield  an equality
$((\u\back\UU\b)^\u)^{\t}= ((\u\back\UU\g)^\u)^{\t}$
(in fact, it is known that $((\u\back\UU\g)^\u)^{\t}=(\u\back\UU\g)^\u$,
see Lemma \ref{vlem}(iii)  below).
We obtain an algebra isomorphism $\sigma: \ut\iso ((\u\back\UU\g)^\u)^{\t}$,
cf. \eqref{hcdef}.
The Harish-Chandra homomorphism
is then defined as a composition
$Z\g\xrightarrow{\,pr\,} ((\u\back\UU\g)^\u)^{\t}\xrightarrow{\,\sigma\inv\,}\ut$.

The imbedding $\ug\into \dd(G)$ provides an isomorphism of $Z\g$
and the algebra of  bi-invariant differential
operators on $G$. This  isomorphism  descends to an algebra map
$Z\g\to\dds, z\mto \dag z$,
resp. ${Z\g\to\dd^\psi(G/\bU), z\mto z^\psi}$.
The assignment  $z_1\o z_2\mto i_T(\hc(z_1))\o z_2^\psi$ gives an imbedding of
$Z\g\o Z\g$  as a subalgebra of $\dd(T)\o \ddp$.
Thus, we may (and will)  view $\BM$  as 
 a $(Z\g,Z\g)$-bimodule using the  `outer' action of this subalgebra.

 Let $\BM^{Z\g,\out}:=\BM^{Z\g}$, where
 we follow the notation of Example \ref{examps}(ii) and for any  $(Z\g,Z\g)$-bimodule $E$
 write
 $E^{Z\g}:=\{x\in E\mid zx=xz,\  \forall z\in Z\g\}$.

\begin{lem}\label{Z-lem} We have $1_\BM\in \BM^{Z\g,\out}$.
\end{lem}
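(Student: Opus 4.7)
The plan is to reduce the statement to the single identity
\[
\hc(z)\otimes 1 \,=\, 1\otimes z^\psi \qquad\text{in }\BM,\quad\forall z\in Z\g.
\]
Indeed, unpacking the outer $(Z\g,Z\g)$-bimodule structure on $\BM$ (induced by $z_1\otimes z_2\mapsto \hc(z_1)\otimes z_2^\psi$ into $\dd(T)\otimes\ddp$ acting outerly), the condition $z\cdot 1_\BM = 1_\BM\cdot z$ says precisely that the element $\hc(z)\otimes 1$ obtained from the left factor of $Z\g$ coincides with $1\otimes z^\psi$ obtained from the right one.

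The key input is formula \eqref{dds}: $\dag z = a(\hc(z))$ in $\dds$. Concretely, this asserts that for any lift $\tilde h\in\UU\b$ of $\hc(z)\in\ut = \UU\b/\u\UU\b$, the images of $z$ and $\tilde h$ in $\u\backslash\dd(G) = \dd(G)/\u\dd(G)$ agree, i.e. $z-\tilde h\in \u\,\dd(G)$.

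First I would exploit this inside the balanced tensor product $\dd(T)\otimes_{\uub}\dd(G)$. The map $\UU\b\to\dd(T)$ factors through the surjection $\UU\b\twoheadrightarrow\ut$, so it kills $\u$; therefore the balancing relations $u_T\cdot x\otimes d = u_T\otimes x\cdot d$ for $x\in\u$ force $1\otimes \u\,\dd(G) = 0$. Applying this to $z-\tilde h\in \u\,\dd(G)$ yields $1\otimes z = 1\otimes\tilde h$ in $\dd(T)\otimes_{\uub}\dd(G)$. Sliding the $\UU\b$-element $\tilde h$ across the balanced tensor gives $1\otimes \tilde h = \tilde h\otimes 1$, and the image of $\tilde h$ in $\dd(T)$ equals $\hc(z)$ (independence of the lift is automatic, as two lifts differ by an element of $\u\UU\b$, which again vanishes in $\dd(T)$). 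Hence $\hc(z)\otimes 1 = 1\otimes z$ already in $\dd(T)\otimes_{\uub}\dd(G)$.

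Finally, I pass to the quotient $\BM = \dd(T)\otimes_{\uub}(\dd(G)/\dd(G)\up)$. The class of $z$ in $\dd(G)/\dd(G)\up$ is by definition $z^\psi$, so the identity descends to $\hc(z)\otimes 1 = 1\otimes z^\psi$ in $\BM$, which is exactly what is needed. There is no genuine obstacle: the whole argument is a short direct calculation once \eqref{dds} is in hand. Morally, the lemma simply says that the Harish-Chandra identity \eqref{dds} upgrades, essentially tautologically, from $\dds$ to the Miura bimodule $\BM$.
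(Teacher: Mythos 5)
Your proof is correct, and it rests on the same key input as the paper's proof, namely the Harish-Chandra identity \eqref{dds}. The only difference is presentational: the paper runs the computation entirely inside $\BM$, exploiting the commutation of the inner action of $(\dd(T)\o_{\ut}\dds)^\t$ with the outer one and sliding $\hc(z)$ across the $\o_{\ut}$-balance to hit $a(\hc(z))=\dag z$; you instead work one step upstream, in the balanced tensor $\dd(T)\o_{\uub}\dd(G)$, unwinding \eqref{dds} to the explicit containment $z-\tilde h\in\u\,\dd(G)$ and noting that the $\o_{\uub}$-balance kills $1\o\u\,\dd(G)$. The two computations are equivalent; yours is a mild unpacking of the paper's one-line chain, with the advantage of making the role of the lift $\tilde h\in\UU\b$ and the precise mechanism of the vanishing explicit.
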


\begin{proof} 
Fix  $z\in Z\g$.  It is well known (and follows from Lemma \ref{trick}(i) below)
that inside $\dds$
one has
$\dag z=a(\hc(z))$.
Hence, 
inside  $(\dd(T)\o_{\UU\t}\dds)^\t$ we find
\beq{dds2}
i_T(\hc(z))\o1\ =\ 1\o a(\hc(z))\ =\ 1\o \dag z,\qquad\forall z\in Z\g.
\eeq

It is clear that inside
$\u\back\dd(G)/\up$, we have $\dag z 1=1 z^\psi$. 
Hence, 
writing $z_T:=i_T(\hc(z))$,
 inside $\BM=\dd(T)\,\o_{_{\ut}}\,(\u\back\dd(G)/\up)$, we obtain
\[z1_\BM\ =\  z_T(1\o1)1\ =\ 1(z_T\o1)1\ \overset{\eqref{dds2}}{=\!=\!=}\ 
  1(1\o \dag z)1\ =\ 1(1\o1)z^\psi\ =\ 1_\BM z,
  \]
  where we have used the notation
  $b(1\o 1)b'$, resp. $1(b\o b')1$, for the outer, resp. inner, action
on $1_\BM=1\o 1$
of an element $b\o b'$ of $\dd(T)^{op}\o\ddp$, resp. $(\dd(T)\o_{\UU\t}\dds)^\t$.
\end{proof}

\subsection{}
Let $E$ be a $\ug$-module.
For any   Lie subalgebra $\fk\sset \ug$, the $Z\g$-action on $E$
survives in $E^\fk$, resp. $\fk\back E$.
In the case $\fk=\u$, there is also a $\ut$-action $a: x\mto a* x$ on
 $E^\u$, resp. $\u\back E$, induced by the  $\UU\b$-action on $E$.

The following
is a simple consequence of results of Kostant \cite{Ko2}.

\begin{lem}\label{trick} \vi For any $z\in Z\g$ and    $x\in
  E^\u$, resp. $x\in \u\back E$, one has $zx=\hc(z)*x$.
  In particular, there is a well defined map $\ut\o_{Z\g} (\u\back E)\to  \u\back E$,
  given by the $\ut$-action.

\vii If  $E$ is locally nilpotent
as a $\up$-module then the composite map
\[\textsl{action}\ccirc(\Id_\ut\o f_E):\
  \ut\o_{Z\g} E^\up\ \xrightarrow{\Id_\ut\o f_E} \  \ut\o_{Z\g} (\u\back E)\
  \xrightarrow{\textsl{action}}\  \u\back E,\]
 where $f_E$ is the composition
 $E^\up\into E \onto \u\back E$,
is
 an isomorphism of  $\ut$-modules.
\end{lem}
\begin{proof}
  Part (i)  is immediate  from the construction of the
  Harish-Chandra homomorphism.
  Now, let $E$ be as in (ii). 
It was shown in \cite[\S3]{Ko2} that the natural map
${\UU\g/\up\o_{Z\g} E^\up\to E}$ is an isomorphism.
In the case where $E$ is finitely genenerated as a  $\UU\g$-module
a proof of this isomorphism can also be  found  in \cite{GG}, Theorem 6.1.
To prove  the general case, choose a family $E_\al$ of 
 finitely genenerated  $\UU\g$-submodules of $E$, such that $E=\underset{^{\too}}\lim\, E_\al$.
It is clear that we have $E^\up=\underset{^{\too}}\lim\, (E_\al)^\up$.
We obtain 
\[\UU\g/\up\o_{Z\g} E^\up\ =\
\UU\g/\up\o_{Z\g} \big(\underset{^{\too}}\lim\, (E_\al)^\up\big)\ =\
\underset{^{\too}}\lim\, \big(\UU\g/\up\o_{Z\g} (E_\al)^\up\big)\ =\
\underset{^{\too}}\lim\, E_\al\ =\ E,
\]
where the second equality holds  since the functor $\UU\g/\up\o_{Z\g} (-)$ commutes with direct limits.

From the above isomorphism, we deduce
\begin{align*}
 \u\back E\ =\  \u\back(\ug\o_{\ug} E)\ =\ (\u\back\ug)\,\o_{\ug}\, E&=
\u\back\ug\,\o_{\ug} \,\big(\UU\g/\up\,\o_{Z\g}\, E^\up\big)\\
&=
(\u\back\ug/\up)\ \o_{Z\g}\, E^\up=\ut\o_{Z\g}\, E^\up,
\end{align*}
where we have used that the composite $\ut=\u\back\UU\b\into
\u\back\ug\onto\u\back\ug/\up$ is an isomorphism.
\end{proof}

Let   $E:=\dd(G)/\up$, so $\u\back E=\u\back\dd(G)/\up$.
We have natural maps
\[
\dd(T)\,\oinn_{Z\g}\, (\dd(G)/\up)^\up\,\xrightarrow{\Id_{\dd(T)}\o f_E}\
\dd(T)\,\oinn_{Z\g}\, (\u\back\dd(G)/\up)\,\onto\,\dd(T)\,\o_\ut\,(\u\back\dd(G)/\up).
\]

\begin{cor}\label{big} The composition of the above maps yields
an isomorphism $\dd(T)\, \oinn_{Z\g}\,  \dd^\psi(G/\bU)\, \iso\, \BM$, of $(\dd(T),\,\dd^\psi(G/\bU))$-bimodules
with respect to the outer action.
\end{cor}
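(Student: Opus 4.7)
The plan is to deduce the corollary from Lemma~\ref{trick} by taking $E:=\dd(G)/\dd(G)\up$ and then tensoring on the left by $\dd(T)$ over $\ut$.

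First, I would identify the two pieces that enter Lemma~\ref{trick}. By the very definition of the Whittaker differential operators in \S\ref{ss1.2}, one has $E^\up=\dd^\psi(\tbm)$. Since, inside $\dd(G)$, left multiplication by the left-invariant copy of $\u$ commutes with left multiplication by $\up$, the mixed coinvariants $\u\back E$ are naturally identified with $\u\back\dd(G)/\up$, the quotient appearing in \eqref{miu1}.

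Second, I would verify the hypothesis of Lemma~\ref{trick}, namely that $E$ is locally $\up$-nilpotent. The key observation is that, modulo $\dd(G)\up$, left multiplication by $u=\bar\imath(x)-\psi(x)\in\up$ on a lift $a\in\dd(G)$ reduces to the adjoint action $[\bar\imath(x),a]$, because $au\in\dd(G)\up$ and the scalar $\psi(x)$ is central. Iterating, $\up^N$ acts on $E$ through the $N$-fold iterated commutator $\ad\bar\imath(x_1)\cdots\ad\bar\imath(x_N)$. Local nilpotence then follows from the $T$-weight decomposition of $\dd(G)$: any class $\bar a\in E$ lifts to a finite sum of weight vectors, each $\ad\bar\imath(x)$ shifts the weight by a simple root, and for $N$ sufficiently large the resulting iterated commutator lies in $\dd(G)\up$.

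Third, Lemma~\ref{trick} then produces an isomorphism of left $\ut$-modules
\[
\ut\ \oinn_{Z\g}\ \dd^\psi(\tbm)\ \iso\ \u\back\dd(G)/\up,
\]
realized precisely as $\Id_\ut\otimes f_E$ followed by the $\ut$-action. Tensoring on the left with $\dd(T)$ over $\ut$ and using associativity of the tensor product gives
\[
\dd(T)\,\oinn_{Z\g}\,\dd^\psi(\tbm)\ =\ \dd(T)\otimes_\ut\bigl(\ut\otimes_{Z\g}\dd^\psi(\tbm)\bigr)\ \iso\ \dd(T)\otimes_\ut\bigl(\u\back\dd(G)/\up\bigr)\ =\ \BM.
\]
The resulting composite map coincides, by construction, with the two-step composition stated in the corollary. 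Compatibility with the outer right $(\dd(T),\dd^\psi(\tbm))$-bimodule structures is immediate, since on both sides the outer actions simply act by right multiplication on the respective tensor factor and are preserved throughout the chain of isomorphisms.

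The main obstacle I expect is step two, the verification of local $\up$-nilpotence. The order filtration on $\dd(G)$ is preserved but \emph{not strictly lowered} by $\ad\bar\imath(x)$, so the order filtration alone is inadequate; the argument must track the $T$-equivariance (or equivalently use a PBW-type decomposition $\dd(G)\cong\C[G]\otimes\UU\bar\imath\g$) to witness that sufficiently high iterated brackets of right-invariant vector fields against any fixed $a$ land in the left ideal $\dd(G)\up$.
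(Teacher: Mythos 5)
Your proposal is correct and follows essentially the same route as the paper: take $E=\dd(G)/\dd(G)\up$, note that the left $\up$-action on $E$ is locally nilpotent, apply Lemma~\ref{trick}, and then tensor with $\dd(T)$ over $\ut$. The only difference is that you actually justify the local nilpotence (via the reduction of left multiplication by $\up$ to the adjoint action of $\bar\imath(\bu)$ and local finiteness of the $\bU$-action on $\dd(G)$), which the paper simply records as an observation.
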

\begin{proof} Observe  that the action of $\up$ on $E=\dd(G)/\up$ by left multiplication
is locally-nilpotent.
Hence, we compute
\begin{align*}
\BM&\ =\ \dd(T)\o_{\ut}(\u\back\dd(G)/\up)\qquad\text{(by \eqref{miu1})}
\nonumber\\
&\ccong \dd(T)\, \o_{\ut}\, \big(\ut \,\oinn_{Z\g}\,(\dd(G)/\up)^\up\big)\qquad\text{(by Lemma \ref{trick})} \\
&\ccong \dd(T)\ \oinn_{Z\g}\ (\dd(G)/\up)^\up\ =\ \dd(T)\ \oinn_{Z\g}\ \dd^\psi(G/\bU).\qedhere
\end{align*}
\end{proof}

The isomorphism of the corollary restricts to
an isomorphism
\beq{zgb}
\big(\dd(T)\, \oinn_{Z\g}\,  \dd^\psi(G/\bU)\big)^{Z\g, \out}\ccong \BM^{Z\g, \out}.
\eeq

For  $\mu\in\BX $, let $t^\mu\in\C[T]$ denote the character $\mu$ viewed as a regular function on $T$.
The algebra  $\dd(T)$  has a weight decomposition
$\ \dd(T)=\oplus_{\mu\in\BX }\  t^\mu\cdot \ut=\oplus_{\mu\in\BX }\  \ut\cdot t^\mu$,
with respect to the adjoint $\t$-action, equivalently, $T$-action by translations.
Similarly, the adjoint $\t$-action on $\dds$ is semisimple, so  the algebra $\dds$ has a weight grading:
\[\dds=\oplus_{\mu\in\BX }\  \dds^\mu,\qquad \dds^\mu:=\{u\in\dds\mid a(h) \cdot u-u\cdot a(h)=\mu(h) u,\ \forall h\in \t\}.
\]
It is clear that each of the spaces $\dds^\mu$ is stable under the $\ut$-action 
$h: u\mto a(h)\cdot u$, on
$\dds$. So, one has direct sum decompositions
\[(\dd(T)\ \bo_{\ut}\ \dds)^\ut\ =\
\bplus_{\mu\in\BX }\  (\C t^{-\mu}\cdot\ut)\ \o_{\ut}\ \dds^\mu
\ =\
\bplus_{\mu\in\BX }\  (\C t^{-\mu}\o \dds^\mu).
\]
Furthermore, 
it is easy to check that the following map:
\beq{alg-twist}
\phi:\ \dds=\bplus_{\mu\in\BX }\  \dds^\mu\too(\dd(T)\ \o_{\ut}\ \dds)^\ut,
\quad \sum_\mu\ u_\mu\mto \sum_\mu\ (t^{-\mu}\o u_\mu),
\eeq
is a $\BX $-graded algebra isomorphism.

Recall that  the left (inner) action on $\BM$
 of
the algebra $(\dd(T)\ \bo_{\ut}\ \dds)^\ut$ and the right (outer) action of the algebra 
$\dd(T)\opp\o\ddp$  commute.
Therefore, Lemma \ref{Z-lem} implies that for any $u\in \dds$, one has $\phi(u)1_\BM\in \BM^{Z\g,\out}$.
We define a map $\kap_\dd$  as a composition
\beq{kap}
\kap_\dd:\ \xymatrix{
\dds\ \ar[rr]^<>(0.5){u\mto \phi(u)1_\BM}&&\
\BM^{Z\g,\out}\ \ar@{=}[rr]^<>(0.5){\eqref{zgb}}&&\
(\dd(T)\opp\ \oinn_{Z\g}\  \dd^\psi(G/\bU))^{Z\g,\out}.
}
\eeq
Explicitly,  the element $\kap_\dd(u)$ is uniquely determined
from the equation $\phi(u)1_\BM=1_\BM\kap_\dd(u)$.
Using this, for any $u,v\in \dds$, we find
\begin{align*}
1_\BM\kap_\dd(uv)&=\phi(uv)1_\BM=\phi(u)(\phi(v)1_\BM)=\phi(u)(1_\BM\kap_\dd(v))\\
&=
(\phi(u)1_\BM)\kap_\dd(v)=(1_\BM\kap_\dd(u))\kap_\dd(v)=1_\BM(\kap_\dd(u)\kap_\dd(v)).
\end{align*}
We deduce that $\kap_\dd$   is an algebra homomorphism.

A $G$-invariant volume form on $G$, resp. $\tb$, 
provides  an algebra isomorphism $\dd(G)\iso\dd(G)\opp$, resp. $\dd(G/U)\iso\dd(G/U)\opp$.
These isomorphisms
are, in fact, independent of the choices of  invariant volume forms since such a form is unique 
 up to a nonzero constant factor. Using that $\dd(\tb)=\dd(G/U)\cong(\dd(G)/\u)^\u$,
 we obtain a chain of algebra isomorphisms

\beq{opp-iso}
\dd(\tb)\ccong\dd(\tb)\opp\ccong\big((\dd(G)/\u)^\u\big)\opp\ccong
\big((\dd(G)\opp/\u)^\u\big)\opp\ccong(\u\back\dd(G))^\u\ =\ \dds.
\eeq

The following result is a more precise version of Theorem \ref{mthm}.
\begin{thm}\label{kaph-thm} The following composite map   is an algebra isomorphism:
\[
\xymatrix{
\dd(\tb)\ \ar[rr]^<>(0.5){\eqref{opp-iso}}_ <>(0.5){\cong}&& \ \dds\ \ar[r]^<>(0.5){\kap_\dd}
&\ (\dd(T)\opp\ \oinn_{Z\g}\  \dd^\psi(G/\bU))^{Z\g,\out}.
}
\]
\end{thm}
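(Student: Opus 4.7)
The plan is to deduce Theorem \ref{kaph-thm} from its quasi-classical shadow, Theorem \ref{thm*}, by a filtered-to-graded deformation argument, following the strategy sketched in the Layout subsection. Since the first arrow \eqref{opp-iso} in the composition is already an algebra isomorphism, the content is that $\kap\colon\dds\to(\dd(T)\opp\,\oinn_{Z\g}\,\dd^\psi(\tbm))^{Z\g,\out}$ is bijective.

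First I would reformulate the statement on the level of the Miura bimodule. By construction, $\kap$ is the composite of the map $u\mapsto\phi(u)\cdot 1_\BM$ and the identification \eqref{zgb}. Thus $\kap$ is an isomorphism iff $u\mapsto\phi(u)\cdot 1_\BM$ gives a bijection $\dds\iso\BM^{Z\g,\out}$. Applying Corollary \ref{big} to replace $\BM$ by $\dd(T)\oinn_{Z\g}\dd^\psi(\tbm)$, and then the Kostant-type isomorphism of Lemma \ref{trick} inside $\u\back\dd(G)/\up$, unwinds the claim into a statement about singular vectors in the universal Verma-type module $(\dd(G)/\up)^\up$, which is precisely the content of Theorem \ref{vthm}: an explicit space of ``Whittaker singular vectors'' is freely generated over $\dds$ by a canonical cyclic vector.

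Next I would set up the filtration machinery. Filtering everything by order of differential operators, one has $\gr\dds\cong\C[\YY]$, $\gr\dd(T)\cong\C[\T^*T]$, $\gr\dd^\psi(\tbm)\cong\C[\XX]$, and $\gr Z\g\cong\C[\t^*/W]$; the symbol map carries $\kap$ to a Poisson algebra homomorphism whose target, by construction, is exactly the right-hand side of Theorem \ref{thm*}. Two facts make the lifting work. First, the flatness of $\dd(T)$ and $\dd^\psi(\tbm)$ over $Z\g$, recorded in the remark after Corollary \ref{tor}, guarantees that the filtration on the tensor product is well-behaved and that taking $Z\g$-invariants commutes with $\gr$. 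Second, the Miura variety $Z$ of Section \ref{4} serves as the geometric model for $\gr\BM$, compatibly under symbols with the algebraic definition \eqref{miu1} and with the map $\kap_Z$ of Proposition \ref{imb}.

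The payoff is that the associated graded of $\kap$ is identified with the Poisson isomorphism of Theorem \ref{thm*}, and a standard argument (a filtered map whose symbol is an isomorphism is itself an isomorphism, provided both sides are exhaustively and separatedly filtered) finishes the proof. The principal obstacle will be verifying that $\gr$ genuinely commutes with the Hamiltonian reduction functor $(-)^{Z\g,\out}$ on the target: one must rule out jumps in filtration when passing to $Z\g$-invariants. This is precisely the reason for routing the argument through the reformulation as Theorem \ref{vthm}, where invariants are replaced by a concrete singular-vector description that transparently respects the associated graded, so that the deformation argument of \S\ref{vpf} reduces the entire question to the already established quasi-classical Theorem \ref{thm*}.
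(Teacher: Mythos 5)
Your overall strategy is the right one and matches what the paper does at the level of the Layout section: reformulate via the Miura bimodule, reduce to Theorem \ref{vthm}, and then deduce that statement by deforming to the quasi-classical Theorem \ref{thm*}. But the crucial technical content of that deformation is missing, and one of your setup claims is actually wrong.

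\textbf{The filtration is wrong.} You propose to use the order filtration on differential operators and assert $\gr\dd^\psi(\tbm)\cong\C[\XX]$. With the order filtration the Whittaker twist washes out: the symbol of $x-\psi(x)$ for $x\in\bu$ is just $x$, so one would get $\gr\dd^\psi(\tbm)\cong\C[\T^*\tbm]$, not $\C[\XX]$. The twisted cotangent bundle $\XX$ appears only if one uses the \emph{Kazhdan} grading, where elements of $\g(m)$ (the $\ad\bbh$-weight spaces for the principal $\sll$-triple) are placed in degree $2-m$ and the deformation parameter $\hbar$ in degree $2$; this is exactly what the paper does in \S\ref{vpf} with $\uh\g$, so that $\uph$ is a \emph{graded} submodule and $(V\o\verh)/\uph\big|_{\hb=0}$ identifies with $V\o\C[\bbe+\g_\bbh]$ (formula \eqref{class}). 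Without this grading your proposed symbol computation does not land on Theorem \ref{thm*}.

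\textbf{The invariants condition does not ``transparently'' respect $\gr$.} You correctly flag the danger that $\gr$ may not commute with $(-)^{Z\g,\out}$, but then assert that routing through Theorem \ref{vthm} makes the singular-vector condition compatible with the associated graded ``transparently.'' That is precisely where all the real work sits. The condition defining $(V\o\verh)^{\J_\mu}$ involves the elements $\frac{\tau_{\hb,\mu}(z^{(1)}_i)-z^{(2)}_i}{\hb}$, which require dividing by $\hbar$, so it is nothing like an evident graded condition; the paper needs the Bezrukavnikov--Finkelberg deformation $\nf$ of $\fc\times\fc$ to the normal bundle of the diagonal in order to make sense of these elements as regular sections and to define $\J_\mu\subset\C[\nt]$ coherently. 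It then needs Lemma \ref{vlem} (freeness over $\uh\t$, with grading bounded below), the explicit three-term complex \eqref{comp}, the identification of the $\hbar=0$ specialization with Proposition \ref{vthm-cl}, and finally Lemma \ref{k-seq} (a semicontinuity statement for graded free modules) to lift exactness from $\hbar=0$. None of this is implied by, nor replaceable with, the ``flatness of $\dd(T)$ and $\dd^\psi(\tbm)$ over $Z\g$'' you cite — that remark in the paper is not used in this proof. As written, your argument asserts the conclusion of \S\ref{vpf} rather than establishing it.
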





 \subsection{Review of twisted differential operators (TDO)}\label{tors-sec} 
In this subsection we extend the
formalism of TDO developed by Beilinson-Bernstein \cite{BB}  to a slightly more general setting
of torsors of group schemes.  Such an extention will be used in the next subsection to give an intrinsic construction of the Miura bimodule
that does not depend on the choice of a pair $B, \bar B$, of opposite Borel subgroups.

Let ${N}$ be a smooth affine group scheme on
a smooth variety $X$
and $\fn=\Lie {N}$ the corresponding  Lie algebra, a locally free $\oo_X$-module.
Let $p: P\to X$ be an ${N}$-torsor.
The $\oo_P$-module  $p^*\fn=\oo_P\o_{p^\hdot\oo_X}p^\hdot\fn$
comes equipped with the natural structure of a Lie algebroid.
The Lie bracket is
given by the formula
$[f_1\o \gamma_1, f_2\o \gamma_2]=
f_1f_2\o [\gamma_1,\gamma_2]+f_1\gamma_1(f_2)\o \gamma_2-
f_2\gamma_2(f_1)\o \gamma_1$.
There is an exact sequence of locally free sheaves
\beq{ex-sec}0\to p^*\fn  \to \T_P \to p^*\T_X\to 0.
\eeq

Let $M=\op{Aut}_N(P)$ be the group  scheme of (fiberwise) automorphisms of  $P$.
The Lie algebra $\fm=\Lie M$ is a locally free  $\oo_X$-module $(p_*\oo_P\o_{\oo_X} \fn)^N$,
 of sections of  an associated bundle  $P\times_{N}\fn$
 for the adjoint representation of $N$ on $\fn$.
Let $\fk:=(p_*\T_P)^N$.
The commutator of  vector
fields gives   a  Lie bracket  on $\fk$  which is  not $\oo_X$-linear in general.
Applying $(p_*(-))^{N}$ to \eqref{ex-sec} 
we get an exact sequence of sheaves of Lie algebras 
\[
0\to {\fm} \to \fk\xrightarrow{a}\T_X\to 0.
\]

The map  $a$ makes $\fk$ a  Lie
algebroid on $X$. Similarly, the bracket 
\[[k_1\oplus f_1, k_2\oplus f_2]:=[k_1,k_2]\oplus \big(a(k_1)(f_2)-
a(k_2)(f_1)\big),\]
 gives $\fk\oplus \oo_X$ the structure of  a  Lie
algebroid on $X$ with anchor map $(k\oplus f)\mto a(k)$.
There is a canonical isomorphism
$(p_*(F_1\dd_P))^{N}\cong \fk\oplus \oo_X$, of Lie algebroids,
where $F_1\dd_P=\T_P\oplus\oo_P$ is
the sheaf of first order  differential operators.

Let  $\Om^1_{P/X}$ be  the sheaf of relative 1-forms on $P$
and $\fm^*:={\mathcal H\textit{om}}_{\oo_X}(\fm,\oo_X)$.
We have canonical isomorphisms
\beq{m=n}\fm^*\cong\fk^*/\Om^1_X\cong(p_*\Om^1_{P/X})^{N}
\cong(p_*\oo_P\o\fn^*)^N.
\eeq
Since $\Ga(X,\n^*)^N\sset \Ga(X,\,(p_*\oo_P\o\fn^*)^N)$,
any  $N$-invariant  section $\psi\in \Ga(X,\n^*)$
gives, via  \eqref{m=n},  an associated section $\psi_\fm\in\Ga(X,\fm^*)$. 

The Lie algebroid $\fk$ acts on $\fm^*$ via the Lie derivative $\textit{Lie}$.
A section $\phi$ of $\fm^*$
is said to be  $\fk$-{\em
  invariant}  if 
$\textit{Lie}_k(\phi(m))=\phi([k,m])$ for all local sections $k\in\fk,\ m\in \fm$.

Let  $\psi\in \Ga(X,\n^*)$ be an  $N$-invariant  section. 
Then,  for any  local sections $u_1,u_2$ of 
$\fn $,  resp. $\fm$, one has  $\psi([u_1,u_2])=0$, resp. $\psi_\fm([u_1,u_2])=0$.
Therefore,  the  map $p^*\fn \to F_1\dd_P,\ u\mto u-(p^*\psi)(u)$,  
resp. $\fm\to \fk\oplus\oo_X,\ u\mto u-\psi_\fm(u)$,
is a morphism of Lie algebroids.
Let $\fn_P^\psi\sset F_1\dd_{P}$, resp.
$\fm^\psi \sset \fk\oplus\oo_X=(p_*(F_1\dd_P))^{N}$, be the image
of that morphism. It is clear that $p^*(\fm^\psi)=\fn_P^\psi$.

Assume now that the $N$-invariant  section $\psi$ is such 
that the associated section $\psi_\fm\in\Ga(X,\fm^*)$ is $\fk$-invariant.
Then,
 $\fm^\psi$ is a Lie ideal of $(p_*(F_1\dd_P))^{N}$. 
It follows  that  
$\fm^\psi \cdot(p_*\dd_P)^{N}=(p_*\dd_P)^{N}\cdot\fm^\psi $  is a two-sided
ideal of $(p_*\dd_P)^{N}$.

One has a push-out diagram of Lie algebroids
\[
\xymatrix{
0\ar[r]& \fm\ar[d]^<>(0.5){\psi_\fm}\ar[r]& \fk \ar[d]\ar[r] &\T_X\ar[r]\ar@{=}[d]&0\\
0\ar[r]& \oo_X\ar[r]& (\fk\oplus\oo_X)/\fm^\psi\ar[r]&\T_X\ar[r]&0
}
\]
\noindent
The bottom row of the diagram  is a Picard algebroid on $X$.
Let $\dd_X^\psi$  be an associated TDO on $X$, see \cite[\S 2.1.4]{BB}.
There are   canonical algebra isomorphisms
\[\dd_X^\psi\ccong (p_*\dd_P)^N/(p_*\dd_P)^{N}\fm^\psi\ccong
(p_*(\dd_P/\dd_P\fn_P^\psi))^N.
\]
Here, the first  isomorphism follows from the
construction of $\dd_X^\psi$ given  in {\em loc cit}. The second  isomorphism
is obtained by applying the functor $p_*(-)^N$  to
the  isomorphism $p^*\dd_X^\psi \cong 
{\dd_P/\dd_P \fn_P^\psi}$. 
The latter   isomorphism follows from the  canonical isomorphism
$\fn_P^\psi\cong p^*\fm^\psi$. 

The sheaf $p^\hdot\dd_X^\psi $ acts naturally on 
$p^*\dd_X^\psi $ by right multiplication. This gives  $\dd_P/\dd_P \fn_P^\psi$
 the canonical  structure of a $(\dd_P, p^\hdot\dd_X^\psi )$-bimodule. Furthermore, one has
\[
\cend_{\dd_P}\bigl(\dd_P/\dd_P\fn_P^\psi\bigr)
\ =\ (p^\hdot\dd_X^\psi)^{op},\en\text{resp.}\en
\cend_{p^\hdot\dd_X^\psi}\bigl(\dd_P/\dd_P\fn_P^\psi\bigr)\ =\
\dd_P.
\]

For any
 left $\dd_X^\psi $-module $\ce$,
the sheaf
\[
p^*\ce=(\oo_P\bo_{p^\hdot\oo_X} p^\hdot\dd_X^\psi )
\o_{p^\hdot\dd_X^\psi }\ce\cong
(\dd_P/\dd_P \fn_P^\psi)\bo_{p^\hdot\dd_X^\psi }\ce.
\]
has the natural structure of a left
$\dd_P$-module.
For any  left, resp. right,  $\dd_P$-module $\cf$ and $i\geq 0$, the sheaf
$H_i(\fm^\psi, p_* \cf)$, resp. $H^i(\fm^\psi, p_* \cf)$, inherits a
left,
resp. right, action of
$\dd_X^\psi $.

\subsection{}\label{gtors}
Fix a connected linear algebraic group $\bg$ and  a $\bg$-torsor $P\to\pt$.
Let $\fl \bg=\op{Aut}_\bg(P)$ be the group of automorphisms of $P$.
Further, let $X$ be a smooth $\bg$-variety and $p: P\to X$ a smooth, surjective
$\bg$-equivariant morphism. It follows that $\bg$ acts transitively on $X$. Let
$N_x\sset \bg$ denote the stabilizer of $x\in X$ in $\bg$.
The family $N_x,\ x\in X$, forms a smooth {\em group scheme  of stabilizers}
$N$, a subgroup of the constant group scheme $\bg\times X\to X$.
The map $p: P\to X$ is a $\bg$-equivariant  $N$-torsor. 
The group scheme $M=\op{Aut}_N(P)$ of  automorphisms of this $N$-torsor,
can be identified in  a natural way with a constant subgroup scheme
of the group scheme $\fl\bg\times X\to X$, that is, we have
$M=\fl N\times X\to X$, where $\fl N$ is a subgroup of the group $\fl\bg$.
Moreover, $P$ is a $\fl\bg$-torsor and the map $P\to X$ descends to
a  $\bg$-equivariant  isomorphism
$P/\fl N\iso X$.

A choice of base point  $\pt\in P$ gives an isomorphism
$\bg\cong \fl\bg$ and a $\bg$-equivariant, resp. $\fl\bg$-equivariant,  isomorphism
$P\cong\bg$, resp. $P\cong\fl\bg$.
Furthermore, writing $x=p(\pt)$, we get a group isomorphism $\fl N\cong N_x$, and a
 $\bg$-equivariant isomorphism
$X\cong \bg/N_x$.

The goal of the rest of this subsection is to define a localized, i.e. sheaf-theoretic,
 counterpart of the Miura bimodule.
To this end, we introduce some notation.
Let   $ U_{\tb}$,
resp.    $U_\tbm$,   be the  group scheme of stabilizers for
the $G$-action on  $\tb$ and   $\tbm$.
Write $\u_\tb=\Lie U_\tb$, resp. $\u_\tbm=\Lie  U_\tbm$.
According to  Lemma \ref{closed} the natural map
 $p: \Xi\to\tb$, resp.    $p_-: \Xi\to\tbm$,
  is a $U_\tb$-torsor, resp. $U_\tbm$-torsor.
Furthermore, $\Xi$  is a $G$-torsor.

In section \ref{can-psi} we have constructed  a canonical  $G\times T$-invariant  section $\Psi\in 
\Ga(\tbm,\u^*_\tbm)$ such that the value of $\Psi$ at every point
$(\bar\b,s)\in\tbm$ is  a nondegenerate character of $\u(\bar\b)$.
Associated with $\Psi$, there is a canonically defined
sheaf $\u_\tbm^\Psi$  on $\tbm$, and the corresponding
TDO  $\dd_\tbm^\Psi$, cf. \S\ref{tors-sec}. 

Let $\dag\dd_\tb:=(\u_\tb\back p_*\dd_\Xi)^{\u_\tb}$.
This is a TDO on $\tb$ and there are canonical isomorphisms
\beq{dd1}
p^*(\dag\dd_\tb)/p^*_-\u^\Psi_\tbm\ccong 
p^*\u_\tb\back\dd_\Xi/p^*_-\u^\Psi_\tbm\ccong
p^*\u_\tb\back p^*_-\dd_\tbm^\Psi,
\eeq
 of sheaves on $\Xi$. Furthermore, it follows from section 
\ref{tors-sec} that the sheaf in \eqref{dd1} has a natural left action of the algebra
$p^\hdot(\dag\dd_\tb)$, as well as a right action of $p_-^\hdot\dd_\tbm^\Psi$. These two actions commute.

We will need
a modification of  the sheaf  defined in  \eqref{dd1} that incorporates the
maximal torus $T$.
In more detail,   let  $B_\bb$, resp.  $\bd_\tb$,
be  the  group scheme of stabilizers for the $G$-action on $\bb$, resp. 
 $T\times G$-action on $\tb$ by  $(g,t): x\mto gxt\inv$.
Let $B_\tb$ be a pull-back of $B_\bb$ via the projection $\tb\to \bb$.
Thus, we have $U_{\tb}\cong [B_\tb,B_\tb]$ and  $B_\tb/U_\tb\cong T_\tb$ is a constant group scheme.
It follows  that
the composition $\bd_\tb\into T\times G\times\tb\xrightarrow{pr_{G\times\tb}} G\times\tb$ yields an
isomorphism $\bd_\tb\iso B_\tb$.  
Let  $\b_\tb=\Lie B_\tb$, resp. $\bde=\Lie \bd_\tb$ and $\ude:=[\bde,\bde]=0\oplus\u_\tb$.
There are canonical isomorphisms $\bde/\ude\cong
\b_\tb/\u_\tb\cong \t\o \oo_\tb$.

Next, we consider
a special  case of the general setting at the begining of this subsection where  $\bg:=T\times G$ and
$P:=T\times \Xi$.
Let $\ba: T\times \Xi\to\tb$, resp.  $\ba_-: T\times \Xi\to\tb_-$,
be a map defined by the formula $\ba(t,x)= p (x)t\inv$, resp.  $\ba_-(t,x)= p _-(x)$. 
By  Lemma \ref{closed}, we have a $T\times G$-equivariant
isomorphism $\tom \cong  T\times \Xi$; furthermore, $\tom$
is a $T\times G$-torsor.
Therefore,
the map  $\ba$ is a $\bd_\tb$-torsor, resp. $\ba_-$ is
 a  $T_\tbm\times_{_\tbm} U_{\tbm}$-torsor.
The map $\ba_-$ equals the composition $\wt\Om\into \tbm\times \tb\xrightarrow{pr_1}\tbm$.
The sheaf $(\bde\back\ba_*\dd_\tom)^{\bde}$ is a TDO on
$\tb$; moreover, using a  localized version of the map
\eqref{alg-twist}  one  constructs a natural isomorphism
$\ba^\hdot\big((\bde\back\ba_*\dd_\tom)^{\bde}\big)\cong \ba^\hdot(\dag\dd_\tb)$.

Below,
we will abuse notation and given a sheaf  $\cf$ on $\Xi$,
 write $\dd_T\boxtimes \cf$ for
$\oo_{T\times\Xi}\,\o_{_{\oo_T\boxtimes \oo_\Xi}}(\dd_T\boxtimes\cf)$.
Mimicing formula \eqref{dd1}, we define
\[
\mm := \ 
\ba^*\bde\backslash\dd_{\tom}/ \ba^*_-\u_\tbm^\Psi
\ccong\t\back(\ba^*\ude\back\dd_{T\times\Xi}/ \ba^*_-\u_\tbm^\Psi)\ccong
\t\back\big(\dd_T\boxtimes (p^*\u_\tb\back p^*_-\dd_\tbm^\Psi)\big),
\]
where  $\t$-coinvariants in the third, resp. fourth, term   are taken
with respect to the $\t$-action induced by the  natural map
$\t\to \Ga(\tom, \t\o \oo_\tom) \iso\Ga(\tom, \ba^*\bde/\ba^*\ude)$,
resp. $\t\to \dd(T)\o \Ga(\Xi, p^*\b_\tb/p^*\u_\tb),\ h\mto h\o 1-1\o h$.
It is clear that $\mm$  is  a $G\times T$-equivariant sheaf on $\wt\Om$.
According to section \ref{tors-sec},  this sheaf
comes equipped with a
natural left action of  the algebra $\ba^\hdot\big((\bde\back\ba_*\dd_\tom)^{\bde}\big)
\cong \ba^\hdot(\dag\dd_\tb)$, as well as a right action of
$\dd_T^{op}\boxtimes p^\hdot_-\dd_\tbm^\Psi$.
The left and right actions commute, so $\mm$ acquires the structure of
a $(\ba^\hdot(\dag\dd_\tb),\  \dd_T^{op}\boxtimes p^\hdot_-\dd_\tbm^\Psi)$-bimodule.

We now fix a base point $(\bar B,\bar x,B,\kap_{\bar \b,\b}(x))\in\Xi$. This gives an identification
$\Xi=G$, resp. $\tom={G\times T}$,  $\tb=G/U$, and $\tbm=G/\bU$.
The map $\ba$ reads: $\ba(t,g)=gt\inv U$.
Let $\psi\in\bu^*$ be the value of $\Psi$ at the point $(\bar B,\bar x)$.
We obtain identifications
$p^*\u_\tb=\u\o\oo_G$, resp. $p_-^*\u^\psi_\tbm=\bar\u^\psi\o\oo_G$.
Using these identifications one deduces an  isomorphism
 $\Ga\big(\Xi,\ p^*\u_\tb\back\dd_\Xi/p_-^*\bu_\tbm^\Psi)=\u\back\dd(G)/\up$,
and algebra isomorphisms 
\begin{align}
  \Ga\big(\tbm,\dd^\Psi_\tbm)\  &\cong\ \dd^\psi(G/\bU),\label{gaga}\\
  \Ga(\tom, \ba^\hdot(\dag\dd_\tb))\ &=\  
                                       \Ga(\tb, \dag\dd_\tb)\ =\ \Ga(\tb, (\u_\tb\back\dd_\tb)^{\u_\tb})\
=\ (\u\back\dd(G))^\u\   =\ \dag\dd. \nonumber
\end{align}

Thus,  the space $\Ga(\tom, \mm)$
acquires the structure of a  $\big(\dag\dd,\,\dd(T)^{op}\o \dd^\psi(G/\bU)\big)$-bimodule.
We compute
\begin{align}
\Ga(\tom, \mm)&\cong\Ga\big(\tom, \ \t\back(\dd_T\boxtimes (p^*\u_\tb\back\dd_\Xi/p_-^*\bu_\tbm^\Psi))\big)
\nonumber\\
&=
\t\back\big(\dd(T)\o \Ga(\Xi,\ p^*\u_\tb\back\dd_\Xi/p_-^*\bu_\tbm^\Psi)\big)\nonumber\\
&=\t\back\big(\dd(T)\o (\u\back\dd(G)/\up)\big)=
\dd(T)\o_\ut (\u\back\dd(G)/\up)=\BM.\label{mcan}
\end{align}
We conclude that the sheaf $\mm$ may be viewed as 
 a localized version of  the Miura bimodule $\BM$.

The definition of  $\mm$ involves neither the choice of a pair  $\b,\bar\b$ of opposite Borels,
nor the choice of a nondegenerate character $\psi\in\bu^*$.
Thus, the object $\Ga(\tom, \mm)$ in the LHS  of  \eqref{mcan}
provides a  completely canonical construction of
the Miura bimodule. Similarly, formulas \eqref{gaga} provide
canonical constructions of the algebras
$\dag\dd$ and $\dd^\psi(G/\bU)$. With these constructions,  the isomorphism
of Theorem \ref{kaph-thm}, hence of our main Theorem \ref{mthm}, becomes a canonical isomorphism of canonically defined algebras.

\section{Proofs of main results}
\label{7}
\subsection{Reformulation in terms of isotypic components}\label{reform}
Throughout this section,  $V$ stands for  an irreducible finite dimensional $G$-representation.
We make $V\o\ug$  an $\ug$-bimodule by letting the left, resp. right, action of $g\in \g$ 
be defined by $g(v\o u)=v\o(gu)$, resp. $(v\o u)g=-(gv)\o u+v\o (ug)$. 
Let $\ad g:\ v\o u\mto g(v\o u)-(v\o u)g=(gv)\o u+ v\o (gu-ug)$, be the corresponding
 adjoint $\g$-action. This action is locally finite.

We fix a pair $\b,\bar\b$ of opposite Borel subalgebras and identify $\t$ with $\b\cap\bar\b$.
A right  $\ug$-module $\ver=\u\back\ug=\ug/\u(\ug)$ is called the universal  Verma module. 
It is clear from definitions that one has
isomorphisms 
\begin{align}
  \u\back(V \o \ug)/\up&\ccong (V \o (\u\back\ug))/\up\,=\, (V \o \ver)/\up,\nonumber\\
  (\u\back(V\o \ug))^\u&\ccong (V\o \ver)^\u.\label{coinv}
\end{align}
where $(-)^\u$ stands for $\u$-invariants of the right, equivalently, adjoint action.
The left action of the algebra $\uub\sset\ug$ on $ V\o \ug$ descends to
a $\ut$-action on $V\o\ver$. The  isomorphism(s) in the first, resp. second, line of \eqref{coinv} respect
the left action of the algebra 
$\ut$ and  the right action of the algebra $(\ug/\up)^\up$, resp.  $(\u\back\ug)^\u$.
Using the map  $Z\g\to (\ug/\up)^\up$, resp. $\ut\to (\u\back\ug)^\u$, cf. \eqref{hcdef}, one may view $(V \o \ver)/\up$ as a $(\ut, Z\g)$-bimodule, resp.
$(V\o \ver)^\u$ as a $(\ut,\ut)$-bimodule.
There is also an  adjoint $\t$-action on  $(V \o \ver)^\u$
induced by the adjoint action of $\g$  on $V \o \ug$.
The $\ad\t$-action is semisimple, so
one has a  weight space decomposition
\beq{weightE}(V \o \ver)^\u=\oplus_{\mu\in
\BX }\ (V \o \ver)^{\u,\mu},
\eeq
where each  $\ad\t$-weight space $(V \o \ver)^{\u,\mu}$ is a
$(\ut,\ut)$-sub-bimodule of $(V \o \ver)^\u$.

Fix $\mu\in\BX $ and let $\tau_\mu: \ut\to\ut$ be an algebra
automorphism defined on generators $t\in \t$ by the assignment $t\mto t-\mu(t)$.
 Let $J_\mu$ be  an ideal of the algebra $\ut\o Z\g$ generated by the elements
$\tau_\mu(\hc(z))\o 1-1\o z$,\ $z\in Z\g$.
We claim that one has an inclusion
\beq{vhc}
(V \o \ver)^{\u,\mu}\sset ((V \o \ver)^\u)^{J_\mu},\qquad\forall\mu\in\BX ,
\eeq
where for any  $(\ut, Z\g)$-bimodule $E$, we write
$E^{J_\mu}:=\{x\in E\mid \tau_\mu(\hc(z))x=xz,\ \forall z\in Z\g\}$.

To prove \eqref{vhc}, let $x\in (V \o \ver)^{\u,\mu}$.
For  $t\in \t$, we compute
$xt=tx -\ad t(x) =tx -\mu(t)\cdot  x =
\tau_\mu(t) x$.
It follows that $xa=\tau_\mu(a)x$  holds for all $a\in \ut$.
We deduce that for any $z\in Z\g$,  one has
$xz=x\hc(z)=\tau_\mu(\hc(z)) x$,
where the first equality holds by Lemma \ref{trick}(i).
This proves  \eqref{vhc}.

Recall that we view  $\ug$  as the algebra of left invariant differential operators on $G$.
Also view $\C[G]$, resp. $\dd(G)$, as a  $G\times G$-module
 where 
the first, resp. second, copy of $G$ acts by left, resp. right, translations.
Thus, we  have $\C[G]=\bplus\ V^*\o V$, resp.
  $\dd(G)=\C[G]\o \ug=\bplus\ V^*\o (V\o \ug)$, where
the sum ranges over the set of isomorphism classes of irreducible $G$-representations.
Hence, using \eqref{coinv} we obtain a decomposition 
\beq{vhh}\u\back\dd(G)/\up\ =\ \bplus\ V^*\o (V \o \ver)/\up,
\quad\text{resp.}\quad
(\u\back\dd(G))^\u\ =\ \bplus\ V^*\o (V \o \ver)^\u,
\eeq
into isotypic components with respect   to the $G$-action by left translations.
From  \eqref{miu1}  and the first isomorphism in \eqref{vhh}, we get
\[
\BM=\dd(T)\, \o_{_\ut}\, (\u\back\dd(G)/\up) \ccong
\C[T]\, \o\, (\u\back\dd(G)/\up) \
=\ \bplus\ V^*\o \big(\C[T]\, \o\, (V \o \ver)/\up\big).
\]
Further, write $\C[T]=\bplus_{\mu\in\BX }\ \C t^{-\mu}$. We find
\begin{align*}(\C[T]\, \o\, (V \o \ver)/\up)^{Z\g,\out}\ &=\ 
\bplus_\mu\ \big(\C t^{-\mu}\o (V \o \ver)/\up\big)^{Z\g,\out}\\
&=\ 
\bplus_\mu\ \C t^{-\mu}\o ((V \o \ver)/\up)^{J_\mu}.
\end{align*}
Thus, we obtain a decomposition
\beq{st1}\BM^{Z\g,\out}\ =\ 
\bplus_{_{V,\mu}}\
V^*\o \C t^{-\mu}\o \big((V \o \ver)/\up\big)^{J_\mu}.
\eeq

Similarly,  from the second  isomorphism in \eqref{vhh}, we deduce
\beq{st2}\dag\dd =(\u\back\dd(G))^\u\ =\ \oplus_\mu\ \dag\dd^\mu\
=\ 
\bplus_{_{V,\mu}}\
V^*\o (V \o \ver)^{\u,\mu}.
\eeq

The map $\kap_\dd$, in \eqref{kap}, commutes with the $G$-action by
left translations and it also respects the $\mu$-decompositions.
Therefore, we conclude
that  this map induces, for each $(V,\mu)$,  a map $\kap_{V,\mu}:\ (V \o \ver)^{\u,\mu}
\to \big((V \o \ver)/\up\big)^{J_\mu}$,  between the corresponding
$(V,\mu)$-components of decompositions  \eqref{st2} and  \eqref{st1},
respectively.
It is easy to check that the map  $\kap_{V,\mu}$ equals the composition of  the following natural 
inclusion and projection:
\beq{ver-map}
\kap_{V,\mu}:\ (V\o\ver)^{\u,\mu}\  \xrightarrow{\eqref{vhc}}\ (V\o\ver)^{J_\mu}\too
\big((V\o\ver)/\up\big)^{J_\mu}.
\eeq

The  $(\ut,Z\g)$-bimodule structure on $V\o\ver$ induces
a  $(\ut,Z\g)$-bimodule structure on each of the three
objects in \eqref{ver-map}.  It is clear from the above discussion that Theorem \ref{kaph-thm}
follows from the theorem below, to be proved in \S\ref{vpf}.

\begin{thm}\label{vthm}
For  any  irreducible $G$-module
$V$ and $\mu\in
{\mathsf Q}$, the  map
\[\kap_{V,\mu}:\ (V\o\ver)^{\u,\mu} \too \big((V\o\ver)/\up\big)^{J_\mu},
\] 
is an isomorphism of $(\ut,Z\g)$-bimodules.
\end{thm}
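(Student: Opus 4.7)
The plan is to prove Theorem \ref{vthm} by passing to a classical limit in which the claim reduces to the $(V,\mu)$-isotypic component of Theorem \ref{thm*}, and then lifting by a standard graded-to-filtered argument. The naive PBW filtration is inadequate here because the Whittaker character $\psi$ sits in filtration degree zero while the generators $\bar\imath(x)$ of $\up$ sit in degree one, so $\psi$ is lost upon passage to the associated graded. I would instead use the Kazhdan filtration on $\ug$, defined via the $\bbh$-grading of the principal $\sll$-triple $(\bbe,\bbh,\bbf)$: assigning to $x\in\g(n)$ (the $n$-eigenspace of $\ad\bbh$) the filtration degree $n+2$ places the simple negative root vectors $f_i\in\g(-2)$ and the scalars $\psi(f_i)$ in the same degree zero. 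With this choice the twisted embedding $x\mapsto\bar\imath(x)-\psi(x)$ becomes a filtered map, and the associated graded of the ideal $\up$ recovers the classical defining ideal of the affine subspace $\psi+\bu^\perp\subset\g^*$.

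The Kazhdan filtration restricts compatibly to $\ver$, to $\ver\o V$ (placing $V$ in degree zero), to $Z\g$, and---via the Harish-Chandra isomorphism, up to the usual $\rho$-shift---to $\ut$ and to the ideal $J_\mu\subset\ut\o Z\g$. By PBW one has $\gr\ver\cong\sym(\g/\u)=\C[\u^\perp]$, and therefore $\gr(\ver\o V)^{\u,\mu}$ is naturally identified with the $(V,\mu)$-isotypic component of $\C[\YY]=(\C[G]\o\C[\u^\perp])^U$ under the Peter-Weyl decomposition $\C[G]=\bplus V^*\o V$. Similarly, $\gr\bigl((\ver\o V)/\up\bigr)^{J_\mu}$ is identified with the matching $(V,\mu)$-component of $\C[\T^*T\times_{\t^*/W}\XX]^{\C[\t^*/W]}$. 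Under these identifications, $\gr\kap_{V,\mu}$ coincides with the restriction of the isomorphism of Theorem \ref{thm*} to this isotypic component, and is therefore an isomorphism.

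To conclude, both source and target of $\kap_{V,\mu}$ are equipped with exhaustive, separated Kazhdan filtrations with respect to which $\kap_{V,\mu}$ is filtered; since the associated graded map is an isomorphism, $\kap_{V,\mu}$ itself must be an isomorphism. The main technical obstacle I anticipate is verifying that the associated graded of the ideal $J_\mu$---which interleaves the translation $\tau_\mu$ with the $\rho$-shifted Harish-Chandra homomorphism---matches the classical ideal $\{z\o 1-1\o z:z\in\C[\t^*]^W\}$ used to form the fiber product $\T^*T\times_{\t^*/W}\XX$, with no spurious quantum corrections entering the leading Kazhdan symbol. A secondary concern is to check that the Kazhdan filtration, most naturally adapted to the Whittaker ideal $\up$, still interacts correctly with the left $\u$-action used to define the source $(\ver\o V)^{\u,\mu}$, so that the two weight decompositions on the classical side (by $G$-isotype and by $T$-weight $\mu$) really are induced from their quantum analogues.
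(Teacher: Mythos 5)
Your strategy---degenerate along the Kazhdan filtration to the commutative statement of Theorem \ref{thm*} and lift back---is in essence the paper's own: the grading on $\uh\g$ used in \S\ref{vpf} (with $\g(m)$ in degree $2-m$ and $\deg\hb=2$) is exactly the Rees-algebra form of your filtration. But as written the proposal has a genuine gap at the point you yourself flag as the ``main technical obstacle,'' and it is not a matter of ruling out spurious corrections. The leading Kazhdan symbol of $\tau_\mu(\hc(z))\o 1-1\o z$ is indeed $z\o 1-1\o z$, but this element acts by \emph{zero} on $\gr\bigl((\ver\o V)/\up\bigr)\cong V\o\C[\bbe+\g_\bbh]$, because both copies of $\C[\fc]$ degenerate to one and the same multiplication action on the slice. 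Hence the naive associated-graded of the condition ``$ej=0$ for all $j\in J_\mu$'' is vacuous, while the actual invariance condition in Theorem \ref{thm*} is $\{\wt z,-\}=0$, which sits in the \emph{subleading} term of the symbol. Identifying $\gr J_\mu$ with the ideal $\{z\o 1-1\o z\}$ is therefore the wrong classical target; you must extract the first-order term. The paper does this systematically by working over the Bezrukavnikov--Finkelberg deformation $\nf$ of $\fc\times\fc$ to the normal cone of the diagonal, where $\J_\mu$ is generated by the divided elements $\frac{\tau_{\hb,\mu}(z^{(1)})-z^{(2)}}{\hb}$, whose specialization at $\hb=0$ is precisely the Poisson derivation $\{z,-\}_{\op{tot}}$. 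Some such division by $\hb$ (or an explicit two-step symbol computation) is unavoidable in your setup.

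The second, smaller gap is in the ``standard graded-to-filtered argument'' itself. The target $\bigl((\ver\o V)/\up\bigr)^{J_\mu}$ is a kernel, and the source is a space of invariants; neither formation commutes with $\gr$ in general, so knowing that $\gr\kap_{V,\mu}$ is an isomorphism between the \emph{expected} classical spaces does not yet identify those spaces with the associated graded of the actual source and target. The paper secures this by proving that $(V\o\verh)^{\u,\mu}$ and $(V\o\verh)/\uph$ are finite-rank free graded $\uh\t$-modules (Lemma \ref{vlem}, resting on Kostant's freeness theorem and on \cite{GR}), realizing the $\J_\mu$-invariants as the kernel of an explicit morphism of such modules in diagram \eqref{comp}, and applying the semicontinuity Lemma \ref{k-seq}. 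You will need these freeness inputs, or a strictness argument serving the same purpose, to make the final step valid.
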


\subsection{}\label{class-act} In this subsection we discuss a Poisson counterpart of Theorem \ref{vthm}.

Let  $\eh:=\bbe+\g_\bbh$.
It is known, e.g. by   Proposition \ref{munuprop}(ii)-(iv),
that this affine linear space  is contained in $\g_r$.
Let  $\fZ|_\eh$ be  the restriction of the group scheme $\fZ$ to
$\eh$ and $\fz_\eh$  the Lie algebra of  $\fZ|_\eh$.
We  put  $\g_\bbh[\eh]:=\g_\bbh\o\C[\eh]$, resp. 
$\fz[\eh]:=\Ga(\eh, \fz_\eh)$.
Thus, an element of  $\g_\bbh[\eh]$,  resp.  $\fz[\eh]$,  is a polynomial map $\eh\to \g_\bbh$,
  resp. a  polynomial map $\xi: \eh\to \g$ such that
$\xi(x)\in\g_x$ for all $x\in \eh$. 
By Lemma \ref{bk-lem}, for any $x\in\eh$ we have $\g_x\sset \b_\bbe$. Therefore,
there is a well-defined morphism $\vkap_\eh: \fz_\eh\to \g_\bbh\times \eh$ and the corresponding
map $\vkap_\eh:\fz[\eh]\to \g_\bbh[\eh]$ of global sections, such that
$(\vkap_\eh(\xi))(x)=\xi(x)\mod\u_\bbe$ for all $x\in\eh$.
Here and below, we use  identifications $\g_\bbh=\b_\bbe/\u_\bbe=\t$.

The map $s: \eh\to \tgr, \, x\mto (\b_\bbe, x)$, provides a section of the map
$\pi: \pi\inv(\eh) \to \eh,\, (\b,x)\mto x$.
Hence, there is a canonical isomorphism
$\fZ|_\eh \cong s^*(\pi^*\fZ)$.
The morphism $\vkap_\eh$ defined above is induced, via the isomorphism,
by the morphism $\vkap: \pi^*\fZ\to T_\tgr$, see Lemma \ref{imvkap}.

Let $T$, resp.   $\fZ_\eh$, act on $T\times \eh$ by
$t_1: (t,r)\mto (t_1t,r)$, resp. $z: (t,r)\mto (\vkap_\eh(z)t,r)$.
By  \eqref{Y}, \eqref{zz2} and \eqref{TZ}, one has $G\times \fZ_\eh$-equivariant
isomorphisms
\[\T^*T\times_\fc\oox\ccong T\times Z\ccong T\times (G\times \eh) \ccong
(T\times \eh )\times G.
\]
Here $G$
acts  in the LHS, resp. RHS, through its action on $\oox$, resp. action
on $G$ by left translations. The group scheme $\fZ_\eh$ acts on
$\T^*T\times_\fc\oox$ diagonally and acts on $(T\times \eh )\times G$
through its action on $T\times \eh$.
We deduce an isomorphism
\[\C[\T^*T\times_\fc \oox]\ccong\C[(T\times \eh )\times G]=\C[G]\o \C[T\times \eh ],\]
of $G$-representations and also of  $\fz[\eh]$-modules.
One has
 the   decomposition $\C[G]=\bplus\ V^*\o V$ 
 and the weight decomposition $\C[T]=\oplus_\mu\, \C t^\mu$.
 Put  $V[\eh]=V\o\C[\eh]$.
Separating $G$-isotypic components in the above isomorphism, for every $V$, we obtain
\beq{tzg}
(V\o \C[\T^*T\times_\fc \oox])^G \ccong V\o\C[T]\o\C[\eh]
\ccong\big(\bplus_{\mux}\ \C t^{\mu}\big)\o V[\eh ].
\eeq

Write  $\xi\vv$ for the natural pointwise action of an element $\xi\in \fz[\eh]$ on a polynomial map
$\vv:\eh\to V$ given by 
$(\xi\vv)(x):=\xi(x)(\vv(x))$. Then, the $\fz[\eh]$-action  on the LHS of \eqref{tzg}
goes, via the above isomorphism, to a
$\fz[\eh]$-action on the RHS given by the formula
\[
\xi(t^\mu\o \vv)= t^\mu \o \big((\mu\ccirc\vkap_{\eh})(\xi)\cdot \vv +\xi\vv\big),
\quad\forall \mu\in {\mathsf{Q}},\,\vv\in V[\eh],
\]
where $\mu\in\BX$ is viewed as
 a linear function on $\g_\bbh=\t$, so
$\mu\ccirc\vkap_{\eh}$ is the composition
$\fz[\eh]\xrightarrow{\vkap_\eh}\g_\bbh[\eh]\xrightarrow{\mu} \C[\eh]$.

Let $z\in \C[\fc]$. 
We may  view the differential $d(\vth^*z)$ of
the function $\vth^*z\in\C[\g]^G$,  as a polynomial map $\g\to \g$.
The map being $G$-equivariant, it follows 
that $d(\vth^*z)(x)\in\g_x$, for all $x\in \g$. Therefore, restricting this map to $\eh$,  one gets
an element of $\fz[\eh]$ to be denoted $\zf$. In terms of
the isomorphism of Lemma \ref{fz-lem}, one can identify
 $\zf$ with  the image of the section $dz$ of $\T^*\fc$ under the
composition $\Ga(\fc, \T^*\fc)\iso\Ga(\fc,\fz_\fc)\to \Ga(\t, \vth_\eh^*\fz_\fc)\cong\fz[\eh]$,
where $\vth_\eh$ denotes the composite $\eh\into\g\to\g/\!/G=\fc$.

Let ${\tilde z}$ be a pull-back of $z\in \C[\fc]$ via the
composite map $\T^*T\times_\fc\oox\to\fc$, cf. \eqref{star1}.
The Poisson bracket with  ${\tilde z}$
gives an operator $\{{\tilde z},-\}$  on $\C[\T^*T\times_\fc \oox]$.
The function ${\tilde z}$ being $G$-invariant,
the  operator  $\{{\tilde z},-\}$ commutes with the $G$-action,
hence restricts to an operator on each isotypic component
${(V\o \C[\T^*T\times_\fc \oox])^G}$. Equation \eqref{df} implies that
the latter operator is given, in terms of decomposition \eqref{tzg}, by the formula
\[
\{{\tilde z},\,  t^\mu\o \vv\}= \zf(t^\mu\o \vv)=t^\mu \o \big((\mu\ccirc\vkap_{\eh})(\zf)\cdot \vv +\zf\vv\big).
\]
We define
\beq{f-act}
V[\eh]^{\{\mu\}}:=
\big\{\vv\in V[\eh]\mid \xi(\vv)=\mu\ccirc\vkap_{\eh}(\xi)\cdot \vv,\ \forall \xi\in \fz[\eh]\big\}.
\eeq

From \eqref{df} and \eqref{tzg}-\eqref{f-act}, using that the Poisson action of
$\C[\fc]$ commutes with the $G$-action,
we find that  the Poisson centralizer of $\C[\fc]$ in the LHS of \eqref{tzg}
has the following decomposition
\begin{align}
\big((V\o \C[\T^*T\times_\fc \oox])^G\big)^{\C[\fc]} &\cong
\big(V\o \C[\T^*T\times_\fc \oox]^{\fz[\eh]}\big)^G \label{TZG}\\
&\cong\big(V\o\C[T]\o\C[\eh]\big)^{\fz[\eh]}
\cong 
\bplus_{\mux}\ \C t^{-\mu}\o V[\eh]^{\{\mu\}}.\nonumber
\end{align}

We are going to use Theorem \ref{thm*} and compare
\eqref{TZG} with an analogous decomposition for $\C[\T^*\tb]$.
To this end,
let $B=B_\bbe$, resp. $U=U_\bbe$, and  identify $\tb=G/U$.
The group $B$ acts on $\b$ via the adjoint action and it also acts on
every  $G$-representation $V$. We let $B$ act  on
$V[\b]:=V\o \C[\b]$ diagonally. Let $V[\b]^{U,\mu}\sset V[\b]$ denote the $\mu$-weight
space of the residual action of $T=B/U$ on $V[\b]^U$.
The group $G$, resp. $ T$, acts on $G/U$ by left, resp. right,  translations.
Using   $G\times T$-equivariant  isomorphisms
 $\T^*(G/U)\cong G\times_U\u^\perp\cong G\times_U\b$ and
 the Peter-Weyl  decomposition we obtain, for every $V$, an isomorphism
\beq{tbg}
\big(V\o \C[\T^*\tb]\big)^G\ccong \big(V\o \C[\b]\big)^U\ =\ V[\b]^U
\ =\
\bplus_{\mux} (V[\b])^{U,\mu}.
\eeq

The isomorphism 
$\C[\YY]\cong 
\C[\T ^* T\times_{\fc}\xx]^{\C[\fc]}$ of  Theorem \ref{thm*} is
 $G$-equivariant. Hence it  induces  for every $V$, 
an isomorphism, cf.   \eqref{TZG} and ~\eqref{tbg},
\[\kap_{V}:\  \big(V\o \C[\T^*\tb]\big)^G=V[\b]^U\  \xrightarrow{\ \cong\ }
\  \big(V\o \C[\T^*T\times_\fc \oox]^{\C[\fc]}\big)^G.
\]

View $f\in V[\b]^{U,\mu}$ as  a $V$-valued polynomial function  on $\b$
and let  $f|_{\eh}\in V[\eh]$ be its  restriction
 to $\eh \subset\b$. Then, from  commutativity of the diagram of
Proposition \ref{imb} one easily derives that $f|_{\eh }\in
V[\eh]^{\{\mu\}}$ and, moreover, one has
$\kap_{V}(f)=t^{-\mu}\o f|_{\eh }$. It follows that the map $\kap_{V}$  respects the $\mu$-decompositions
in \eqref{TZG} and  \eqref{tbg}, respectively. Thus, this map
breaks up into a direct sum of maps
\[\kap_{V,\mu}:\  V[\b]^{U,\mu}\too
V[\eh ]^{\{\mu\}},\quad f\mto  f|_{\eh },\qquad\mu\in\BX .
\]

The map $\kap_V$ being an isomorphism, we obtain
the following result that may be viewed as a Poisson counterpart of
Theorem \ref{kaph-thm}, cf.  isomorphisms \eqref{class} below.
\begin{prop}\label{vthm-cl} For every irreducible $G$-module $V$ and $\mu\in\BX $, the map
$\kap_{V,\mu}$ is an isomorphism.\qed
\end{prop}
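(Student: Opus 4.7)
The plan is to deduce Proposition \ref{vthm-cl} directly from the classical isomorphism of Theorem \ref{thm*} by extracting $V$-isotypic components under the $G$-action and $\mu$-weight components under the $T$-action, and then identifying the resulting map with $\kap_{Z,V,\mu}$.

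Concretely, I would apply the functor $(V \otimes -)^G$ to both sides of the $G \times T$-equivariant Poisson algebra isomorphism
\[
\C[\T^*\tb]\ \iso\ \C[\T ^* T\times_{\t^*/W}\XX]^{\C[\t^*/W]}
\]
furnished by Theorem \ref{thm*}. The decompositions \eqref{tbg} and \eqref{tzg}, already carried out in the preceding paragraphs, transform the LHS into $\bplus_{\mu\in\BX} (V \otimes \C[\b])^{U,\mu}$ and the RHS into $\bplus_{\mu\in\BX} \C t^\mu \otimes (V \otimes \C[\bbe + \g_\bbh])^{\{\mu\}}$, respectively. Because the isomorphism of Theorem \ref{thm*} is $T$-equivariant, it preserves the $\mu$-grading, so one obtains, for each $\mu \in \BX$, a component-wise isomorphism
\[
(V \otimes \C[\b])^{U,\mu}\ \iso\ (V \otimes \C[\bbe + \g_\bbh])^{\{\mu\}}.
\]

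The remaining task is to identify this component-wise isomorphism with the map $\kap_{Z,V,\mu}$ defined in the excerpt. Tracing through the proof of Theorem \ref{thm*}, the isomorphism is realized by the pullback along $\kap_Z: Z \to (\T^*\tb)_r$ of Theorem \ref{tors-thm}, combined with the identification of $\C[\fc]$-invariants in \eqref{Zcalc}. The explicit description \eqref{Y} shows that, under the identifications $Z \cong G \times (\bbe + \g_\bbh)$ and $\T^*\tb \cong G \times_U \b$, the morphism $\kap_Z$ is induced by the inclusion $\bbe + \g_\bbh \hookrightarrow \b$. Consequently, after Peter-Weyl decomposition, $\kap_Z^*$ restricts on $(V,\mu)$-isotypic components to the map $f \mapsto f|_{\bbe + \g_\bbh}$, which is precisely $\kap_{Z,V,\mu}$.

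The principal obstacle is not computational but rather a bookkeeping matter: one must carefully verify that the chain of identifications in \eqref{star}, \eqref{tgz}, \eqref{tzg}, and \eqref{tbg} is compatible with the pullback $\kap_Z^*$ that underlies the isomorphism of Theorem \ref{thm*}, and in particular that the $G$ and $T$ actions are transported consistently at each step. Once this compatibility is established, the isomorphism property of each $\kap_{Z,V,\mu}$ follows automatically from the global algebra isomorphism of Theorem \ref{thm*}.
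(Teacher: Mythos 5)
Your proposal is correct and reconstructs what the paper leaves implicit: the proposition carries a \qed because the identifications in \S\ref{class-act} together with Theorem~\ref{thm*} \emph{are} the proof, and you have traced them through in the right order. Note that the paper's phrase ``Theorem~\ref{vthm} yields the following result'' cannot be taken literally, since Proposition~\ref{vthm-cl} is invoked in the proof of Proposition~\ref{vprop} (hence of Theorem~\ref{vthm}); as you recognize, the logically independent source is Theorem~\ref{thm*}. One small imprecision: the isomorphism of Theorem~\ref{thm*} is not literally ``the pullback along $\kap_Z$'' — $\kap_Z$ is only a closed imbedding (Proposition~\ref{lag}), and the isomorphism of Theorem~\ref{thm*} comes from the $\fZ_\fc$-torsor $\eta$ and the torsor isomorphism $\kap$ of Theorem~\ref{tors-thm}, of which $\kap_Z$ is the composition with the unit section $z\mto 1\times z$. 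But upon passing to $\fZ_\fc$-invariants this distinction disappears and your identification of the $(V,\mu)$-component with the restriction map $f\mto f|_{\bbe+\g_\bbh}$ is exactly the one the paper uses.
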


\subsection{A deformation construction}\label{vpf-1}

We are going to deduce Theorem \ref{vthm} from Proposition \ref{vthm-cl}
by a deformation argument. To this end, we need to recall
a well-known construction  of deformation 
to the normal bundle.

Let $X$ be a smooth connected affine algebraic variety equipped with a
smooth (i.e. flat with smooth fibers) morphism
$X\to \C$.
Let $I\subset \C[{X}\times_\C{X}]$
 be the ideal of the 
 diagonal ${X_\De}\subset{X}\times_\C{X}$.
  Write $\hb$ for the coordinate on $\C$ and define a
  $\chb$-subalgebra  $A\subset\C[\hb,\hb\inv]\o_\chb\C[{X}\times_\C{X}]$
  by  $A:=\sum_{n\geq0}\ \hb^{-n} I^n$.
  By definition, one has
  an algebra imbedding
  $\C[{X}]\o_\chb\C[{X}]=\C[{X}\times_\C{X}]\into A$.
  The algebra $A$, viewed as a subalgebra of
  $\C[\hb,\hb\inv]\o_\chb\C[{X}]\o_\chb\C[{X}]$,
is generated
by  its subalgebra $\C[{X}]\o_\chb\C[{X}]$ and the elements
$\frac{z\o1-1\o z}{\hb},\ z\in \C[X]$.
If  $z_1,\ldots,z_r$,
 is a set of generators of
 $\C[{X}]$, then the  $\chb$-algebra $A$ is
generated  by the elements
$\frac{z_i\o 1- 1\o z_i}{\hb}$, and  either $z_i\o1$, or  $1\o z_i$,  for $i=1,\ldots,r$.
Thus,   $A$   is a  finitely generated algebra
and we put $\nf_X:=\Spec(A)$.
The algebra imbedding 
$\C[{X}\times_\C{X}]\into A=\C[\nf_X]$ induces  a canonical  
 morphism
$p : \nf_X\to {X}\times_\C{X}$, which is an isomorphism over $\C\sminus \{0\}$.
It is known,
cf. eg. \cite[5.11]{Gi3},  that  the scheme $\nf_X$ is smooth and the composite
$\nf_X\to {X}\times_\C{X}\to\C$ is a smooth morphism.

The diagonal imbedding $\De: X\to  X_\De\subset X\times_\C  X$ has a canonical
lift to  a closed imbedding $\eps :  X\into \nf_X$ such that $p \ccirc\eps=\De$.
The defining ideal of the image of $\eps $ is an ideal $\J\subset \nf_X$ generated
by the elements $\frac{z\o1-1\o z}{\hb},\ z\in \C[X]$, where we identify
elements of $\C[{X}]\o_\chb\C[{X}]$ with their images under $p ^*$.
By construction, one has a $\chb$-algerbra isomorphism  $\C[\nf_X]/\J\cong \C[X]$.

Let $X_0$, resp. $\nf_{X,0}$, be the fiber of $X\to\C$, resp.  $\nf_X\to \C$,
over $0\in \C$. 
The variety
$\nf_{X,0}$ is known to be canonically isomorphic to   $\T(X_0)$,
the total space of the tangent bundle
on $X_0$.  The variety $p\inv_\nf(X_\De)$
is a union of two irreducible components,
$\eps (X)$ and $\T(X_0)$. These components meet transversely at the zero section
$X_0\subset \T(X_0)$ of the tangent bundle.
It follows that the zero section, viewed as a subvariety of $\nf_X$,
is defined  by the
ideal  $\J+ \hb\,\C[\nf_X]=\J+ \hb\,\C[X\times_{\BA^1} X]$.

Given a $\chb$-module $M$, we write $M\hbo:=M/\hb M$.
Let $M$ be
a $\C[X\times_\C X]$-module
such that the $\C[X\times_\C  X]\hbo$-module $M\hbo$  is annihilated
by  the ideal of the diagonal $X_0\subset X_0\times X_0$.
Thus,  for any $z\in \C[X]$ and $m\in M$, one has
$(z\o1-1\o z)m=\hb m'$, for some $m'\in \C[X\times_\C  X]$.
If $M$ is  $\hb$-torsion free, then the element $m'$  is  uniquely determined
by $z$ and $m$.
We conclude that for any  $\hb$-torsion free
$\C[X\times_\C X]$-module $M$ such that $M\hbo$  is  annihilated
by  the ideal of the diagonal of $X_0\times X_0$,
the $\C[X\times_\C X]$-action on $M$ has a canonical
extention to a $\C[\nf_X]$-action.
Furthermore, the latter action induces a $\C[\T(X_0)]$-action on $M\hbo$.
\smallskip

Below we abuse terminology and 
  given $\chb$-algebras $A_1$ and $A_2$, refer to
  left $A_1\o_{\chb}A_2^{op}$-modules  as  `$(A_1,A_2)$-bimodules'.
Recall that a bimodule $M$ over a commutative algebra
$\mathcal{A}$ is said to be symmetric
if $am=ma$ for all $a\in \mathcal{A}, m\in M$.

Let  $\uh\fk$ denote the asymptotic enveloping
algebra of a Lie algebra $\fk$.
By definition,  $\uh\fk$  is a  $\chb$-algebra generated by 
the vector space $\fk$, with
relations $xy-yx = \hbar [x,y]$ for $x,y \in \fk$. We have
$(\uh\fk)\hbo=\sym\fk$.  
For any $x\in\fk$, the map $\fk\to\fk,\, y\mto [x,y]$, extends uniquely
to a $\chb$-linear derivation $\ad x$ of the algebra $\uh\fk$.
This gives  an `adjoint' action of $\fk$ on $\uh\fk$.
The $\ad\fk$-action is locally finite.

The assignment $\hb \mapsto 1 \o \hb$, $x\mto
1 \otimes x -x \otimes \hb$,
has a unique extension to an algebra homomorphism
$\uh\fk\to{\mathcal U}\fk^{op}\o\uh\fk$. 
Via this homomorphism, for any left $\fk$-module $V$ and a
right $\uh\fk$-module $E$,
 the vector space $V\o E$ acquires
the structure of a right $\uh\fk$-module. 
We define a left $\uh\fk$-action on $V\o \uh\fk$  by $x(v\o u)=v\o xu$.
This makes  $V\o\uh\fk$ an $(\uh\fk,\uh\fk)$-bimodule.
There is also  an `adjoint' action of $\fk$ on 
$V\o\uh\fk$  defined by $\ad x(v\o u):=- x(v) \o u + v\o \ad x(u)$.
The above actions  are related by the formula
$x(v\o u)-(v\o u)x=\hb\cdot\ad x(v\o u)$.
It follows that for any $a\in\uh\g$ and $m\in V\o\uh\fk$, one has
$am-ma\in \hb\cdot(V\o\uh\fk)$, hence
$(V\o\uh\fk)\hbo$ is a symmetric $(\sym\g,\sym\g)$-bimodule.
Furthermore, the following formula
\beq{poissm} \{u\hbo,\,m\hbo\}:= \big(\mbox{$\frac{1}{\hb}$}(um-mu)\big)\hbo,
  \qquad
  u\in\uh\g,\, m\in V\o\uh\fk,
\eeq
gives $(V\o\uh\fk)\hbo$ the structure of a Poisson module
over the  algebra $\sym\g$ equipped
with  the  Kirillov-Kostant Poisson structure.

Various  constructions of previous subsections have asymptotic analogues.
In particular, one has the universal asymptotic  Verma module 
$\ver_\hb:=\u\back\uh\g$.
Let $V$ be a finite dimensional $G$-representation.
 By the above, the vector space $V\o \ver_\hb$ has
the structure of a right $\uh\g$-module. 
The left action of the subalgebra $\uh\b\subset\uh\g$
on  $V\o \uh\g$ descends to  a left $\uh\t$-action on $V\o \ver_\hb$.
 This makes  $V\o\ver_\hb$ an $(\uh\t,\uh\g)$-bimodule. 
The adjoint  $\b$-action  on 
$V\o\uh\g$ descends to  an $\ad\b$-action on
$V\o\verh$. 
We write
$(V\o \verh)^\u$ for the space of $\u$-invariants of the right, equivalently adjoint, action.
Similarly to the discussion between formulas \eqref{coinv} and \eqref{weightE},
the space  $(V\o \verh)^\u$ has
the natural structure of a  $(\uh\t,\uh\t)$-bimodule.
There is also an adjoint $\t$-action on $(V\o \verh)^\u$ which
 is related to the bimodule structure by the formula
 $t(v\o m)-(v\o m)t=\hb\ad t$, for all
$t\in\t,\,v\o m\in (V\o \verh)^\u$.
The $\ad\t$-action is  semisimple. Thus, one has a weight decomposition
$(V\o \verh)^\u=\bplus_{\mu\in\BX }\ (V\o \verh)^{\u,\mu}$
as a direct sum of $(\uh\t,\uh\t)$-subbimodules, cf. \eqref{weightE}.

Observe that  $(V\o\ver_\hb)\hbo$ is symmetric
as a $(\sym\b,\sym\b)$-bimodule since $(V\o \uh\g)\hbo$ is  symmetric as a
$(\sym\g,\sym\g)$-bimodule,
cf. discussion above formula \eqref{poissm}.
Furthermore, the action of $\sym\b$ factors through an action of $\sym\t$.
The identification  $\g/\u=\b^*$, see \eqref{killing},
yields an identification $\sym(\g/\u)=\C[\b]$. We deduce
\beq{class1}(V\o\ver_\hb)\hbo\ccong V\o\sym(\g/\u)\ =\ V\o \C[\b].
\eeq
The projection $\sym\b\to\sym\t$ corresponds, via the identifications
$\C[\t]=\C[\t^*]$,
 to a  pull-back map $pr_\t^*: \C[\t]\to  \C[\b]^U$ 
 induced by the projection $pr_\t: \b\to \b/\u=\t$.
 Therefore, the action of $\sym\t$ on $(V\o\ver_\hb)\hbo$
 corresponds, via  isomorphism \eqref{class1}, to a $\C[\t]$-action 
on $V\o \C[\b]$ given
by $f: (v\o g)\mto v\o ((\pr_\t^*f)\cdot g)$.
We  remark that the map $pr_\t^*: \C[\t]\to  \C[\b]^U$
is an algebra isomorphism.
This follows from the fact that the fiber of the map  $pr_\t$
over any regular element of $\t$ is a single $U$-orbit,
cf.  \cite[Lemma 1.1.14]{CG}, hence
every  $U$-invariant polynomial on $\b$ must be
a pull-back of  a polynomial on $\t$.

\begin{lem}\label{vlem} \vi
  The left  $\uh\t$-action makes
 $(V\o\verh)^{\u}$
 a  free  $\uh\t$-module of finite rank.

 \vii  The map 
$(V\o\verh)^{\u}\hbo\to (V\o \C[\b])^{\u}$ induced by \eqref{class1}
is an isomorphism of $\C[\t]$-modules. 

\viii The following maps induced by the  imbedding $\t=\b/\u\into \g/\u$ 
 are algebra isomorphisms:
 \[\ut\  \mto\   (\u\back\ug)^\u\  =\  \End_{\ug}\ver,\en\  \text{resp.}\en \
   \uh\t\  \mto\   (\u\back\uh\g)^\u\  =\  \End_{\uh\g}\verh,\]
 where the equalities come from \eqref{qhr}.
\end{lem}
\begin{proof} Parts (i) and (ii)  of the lemma are
  essentially contained in \cite{GR}.
  In more detail, for any Lie algebra $\fk$
  we view (as we may) the algebra $\uh\fk$ as the Rees algebra
  associated with the PBW filtration on $\UU\fk$.
  This makes $\uh\fk$ a non-negatively  graded algebra with
  homogeneous components $(\uh\fk)^{(i)}$, where
  $(\uh\fk)^{(0)}=\C$ and $(\uh\fk)^{(1)}=\fk\oplus\C\hb$.
  This applies, in particular,  to $\fk=0$, resp. $\fk=\g$ and $\fk=\t$.
  Let
  $V\o\verh=\oplus_{i\geq0}\,(V\o\verh)^{(i)}$  be a grading defined by
  the formula 
  $(V\o\verh)^{(i)}=(V\o 1)(\uh\g)^{(i)}$.
  One also has a grading $\C[\t]=\oplus_{i\geq0}\,\C^i[\t]$, resp.
  $V\o \C[\b]=\oplus_{i\geq0}\,V\o\C^i[\b]$,
  where $\C^i[-]$  denotes the space of degree $i$ homogeneous polynomials.
  Then,   $(V\o\verh)^\u$, resp. $(V\o \C[\b])^\u$,  is a graded
  $\uh\t$-submodule of $V\o\verh$,
  resp.   $\C[\t]$-submodule of $V\o\C[\b]$.
    Now, Lemma ~3.5.2 of  \cite{GR}
  says that the map in the statement of part (ii)
   is an isomorphism
  of graded $\C[\t]$-modules, proving (ii).

To prove (i), let $\dd_\hb(G)$ be
   the Rees algebra associated with the filtration on $\dd(G)$ by order of the
  diffirential operator. Thus, $\dd_\hb(G)$ is a graded $\chb$-algebra
  and we put $\dag\dd_\hb:=(\u\back\dd_\hb(G))^\u$,
    an asymptotic analogue of the
    algebra $\dag\dd=(\u\back\dd(G))^\u$.
    There is a graded $\chb$-algebra imbedding $\uh\t\into \dag\dd_\hb$, resp.
    and  an  isotypic decomposition 
    $\dag\dd_\hb= \oplus_V\, V^* \o (V \o \verh)^\u$
    as direct sum of graded   $\uh\t$-modules,
    an asymptotic analogue of the imbedding $a$,   cf. 
Section \ref{mi-sec},  resp.  
decomposition \eqref{vhh}.
       It was shown in
  \cite[Proposition 3.2]{GR}  that  $\dag\dd_\hb$ is  free
  as a graded $\uh\t$-module.
      It follows that  each  isotypic component
    $(V\o\verh)^{\u}$ is  projective  as a left $\uh\t$-module.
    This projective module is necessarily free since it is
    a nonnegatively graded module,
    proving ~(i).

Statement  (iii) is well known but we provide a proof for completeness.
To this end,
  we apply (the proofs of) parts (i) and (ii) 
in the special case where $V$ is the trivial representation.
We deduce that $(\u\back\uh\g)^\u$ is a
free graded $\uh\t$-module and
the map $((\u\back\uh\g)^\u)\hbo\to\C[\b]^U$ is an isomorphism
of graded $\C[\t]$-modules.
Therefore, to prove that the second map in (iii) is an isomorphism it
suffices,
by the graded Nakayama lemma, to prove that
the composition
$\C[\t]=\uh\t\hbo\to ((\u\back\uh\g)^\u)\hbo\to\C[\b]^U$
 is an isomorphism. This composition is
the pull-back map $pr_\t^*: \C[\t]\to  \C[\b]^U$,
which is an isomorphism by the remark before Lemma \ref{vlem}.
Finally, we observe that the first isomorphism of part (iii)
follows from the second by  the specialization at $\hb=1$.
\end{proof}

Let $\zh\g$ denote the center of $\uh\g$.
There is an asymptotic counterpart of the Harish-Chandra homomorphism
defined as a composition
$\hc_\hb: \zh\g\to \End_{\uh\g}\verh =(\u\back\uh\g)^\u\iso \uh\t$,
where the first map comes from the $\zh\g$-action on $\verh$ and the isomorphism on the
right
is an inverse of the  isomorphism of Lemma \ref{vlem}(iii).
One shows, mimicing standard arguments,
that the map $\hc_\hb$ yields an algebra isomorphism
$\hc_\hb: \zh\g\iso (\uh\t)^W=\C[\t^*\times\BA^1]^W$.
Here, $W$-invariants  are taken with respect to the
`dot-action' of $W$ defined on
generators $t\in\t$ by
$w: t\mto w(t)-\hb\rho(w(t))+\hb\rho(t)$,
where $\rho$ is the half-sum of positive roots.
The isomorphism $\hc_\hb$ specializes at $\hb=0$ to the Chevalley isomorphism
$(\sym\g)^G\cong(\sym\t)^W$.

Let $f_i,\, i\in  I$, be   simple root vectors for the Lie algebra $\bu:=\u_\bbf$
and $\psi\in\bu^*$ a nondegenerate character.
Let $\uph$ be a $\chb$-submodule of $\uh\g$ generated  by the vector space
$[\bu,\bu]\sset\g$ and the elements $f_i-\psi(f_i),\, i\in  I$.
For any $x,y\in\uph$, we have $xy-yx\in\uph$.
The adjoint action of $\bu$ on $V\o\uh\g$ descends
to a locally finite $\bu$-action on $(V\o\uh\g)/\uph$.
We write $((V\o\uh\g)/\uph)^\bu$ for the corresponding space of 
$\bu$-invariants.

We put $\up:=\{x-\psi(x),\ x\in\bu\}$. We
view this vector space as a subspace of $\sym\g$  and let
$(\!(\u+\up)\!)$ denote an ideal of the algebra
$\sym\g$ generated by the subspace $\u+\up\subset\sym\g$.
We have natural identifications
$\sym\g/(\!(\u+\up)\!)=\C[(\bbe+\b_\bbf)\cap\b_\bbe]=\C[\eh]$.
The  quotient map $\sym(\g/\u)\to \sym\g/(\!(\u+\up)\!)$ corresponds, via 
the identifications, 
to the restriction map $\C[\b]\to \C[\eh]$ induced by the
imbedding $\eh\into\b$. Using that inside
$(\uh\g)\hbo=\sym\g$ one has $\uph\hbo=\up$, we
deduce  natural isomorphisms, cf. \eqref{coinv}, 
\beq{class}
\big((V\o\ver_\hb)/\uph\big)\hbo=
\big(\u\back(V\o \uh\g)/\uph\big)\hbo=V\o \sym\big(\g/(\!(\u+\up)\!)\big)
=V[\eh].
\eeq

Let $\g=\oplus_{m\in\Z}\ \g(m)$, resp. $V= \oplus_{m\in\Z}\ V(m)$, be the weight
decomposition with respect to the adjoint, resp. natural, action of the element
$\bbh\in\g$. Define a  $\Z$-grading  on
$\uh\g$, to be referred to as the {\em Kazhdan grading},
by assigning 
the elements of  $\g(m)\sset \g,\ m\in\Z$,  degree $2+m$ and  assigning $\hb$  degree $2$.
With this grading, the algebra $\uh\g$ may be identified with the Rees algebra 
of the algebra $\ug$ equipped with the Kazhdan
filtration, cf. \cite{Ko2}.
From now on, we will use the Kazhdan grading as our default grading.

The Lie algebra $\u:=\u_\bbe$ is a graded subalgebra of $\g$,
so
the grading on $\uh\g$ induces one on $\verh$.
We equip $V\o\verh$ with
the standard grading on a tensor product.
The Lie algebra  $\uph$ is a {\em graded} $\chb$-submodule  of $\uh\g$
since the elements $f_i$ have degree zero. 
Hence, the grading on $V\o\verh$ induces a quotient grading on $(V\o\ver_\hb)/\uph$,
resp. $\u\back(V\o \uh\g)/\uph$.

We also define a grading on  $(\sym\t)[\hb]$ by placing 
the vector space $\t$  in degree $2$.
Using  the  natural identifications
$\uh\t=\C[\t^*\times\BA^1]=(\sym\t)[\hb]$,
this gives a grading on $\uh\t$, resp. $\C[\t^*\times\BA^1]$.
We equip  $\zh\g$, resp.  $(\sym\t)^W[\hb]$,
 with the 
gradings induced from the one on $\uh\g$, resp. $(\sym\t)^W[\hb]$, by restriction.
The  above defined gradings  induce gradings on various other objects,
e.g. $\uh\t\o_{\chb}\zh\t$, in particular, on
 all
objects which appear in Lemma \ref{vlem}. It is immediate to check
that all maps in
the statement of lemma, as well as the  asymptotic Harish-Chandra homomorphism,
respect the gradings.

\begin{lem}\label{vlem1} 
  \vi The  natural map $\zh\g\to \End_{\uh\g}(\uh\g/\uph)
=((\uh\g)/\uph)^\bu$ is a graded
  $\chb$-algebra isomorphism.

\vii  The left  $\uh\t$-action makes
$(V\o\verh)/\uph$, 
a  free graded $\uh\t$-module of rank $\dim V$.

\viii There are natural isomorphisms of graded $(\uh\t,\zh\g)$-bimodules
\[\uh\t\o_{\zh\g}((V\o\uh\g)/\uph)^\bu\ \iso\ \u\back(V\o\uh\g)/\uph\ccong
(V\o \verh)/\uph.
\]
\end{lem}
\begin{proof} 
Part (i) is essentially
due to Kostant. In more detail, Kostant considers the Kazhdan filtration on
$\ug$, and equips $Z\g$, resp. $\ug/\up$, with induced filtrations.
The natural map $Z\g\to (\ug/\up)^\bu$ respects the filtrations
and the map $\zh\g\to ((\uh\g)/\uph)^\bu$, resp.
$\zh\g\hbo\to ((\uh\g)/\uph)^\bu\hbo$, may be identified with the induced map of
Rees, resp. associated graded, algebras.
The map of associated graded algebras is an isomorphism, by one of the main results of \cite{Ko2}.
The Kazhdan filtration on $\ug/\up$ being bounded  below,
it follows that the map of Rees algebras, that is, the map
in (i),  is also an isomorphism.

To prove (ii), let $\UU(\uph)$ be a $\chb$-subalgebra of $\uh\g$ generated
by $\uph$. Then,  $\verh$ is free over $\uh\t\o_{\C[\hb]}\UU(\uph)$,
by \cite[Theorem 4.6]{Ko2}. Following Kostant, one concludes
that  $(V\o\verh)/\uph$ is  free over $\uh\t$.
Finally, the first isomorphism in (iii) is an asymptotic analogue of Lemma \ref{trick} applied
in the case of $E:=(V\o\uh\g)/\uph$. The second   isomorphism
is an asymptotic analogue
of \eqref{coinv}.
\end{proof}
 
\subsection{Proof of Theorem \ref{vthm}}\label{vpf}
Let $\cz_\hb:=\Spec(\zh\g)$ and let $\cz_\hb\to\C$
be the map induced by the imbedding $\chb\into \zh\g$.
Below, we  use simplified notation $\nf$
for the variety obtained by  the deformation construction of Section \ref{vpf-1} applied
to $\cz_\hb$
viewed as a scheme over $\C$.
From Lemma \ref{fz-lem} and the 
identification
$(\cz_\hb)_0=\fc$ provided by the
specialization of the asymptotic Harish-Chandra isomorphism
at $\hb=0$, we deduce
\beq{nf fz}
\C[\nf]\hbo\, =\, \C[(\cz_\hb)_0]\, \cong\, \C[\T\fc]\, \cong\,  \Ga(\fc, \sym\T^*_\fc)\, \cong
\, \Ga(\fc, \sym\fz_\fc).
\eeq


We view $\uh\t$ as a $\zh\g$-module via the
imbedding
$\hc_\hb: \zh\g\into \uh\t$
and view $\zh\g\o_{\chb}\zh\g$, resp. $\C[\nf]$, as a $\zh\g$-module via the
action of $\zh\g\o1$. Thus, using the identification
$\uh\t=\C[\t^*\times\BA^1]$,
  we have
  $\uh\t\o_{\zh\g}\C[\nf]=\C[(\t^*\times \BA^1)\times_{\cz_\hb}\nf]$.
  The gradings on $\zh\g$ and $\uh\g$ induce natural gradings on 
  $\C[\nf]$, resp. $\uh\t\o_{\zh\g}\C[\nf]$.
  Recall the canonical map $p: \nf\to \cz_\hb\o_{\BA^1}\cz_\hb$.
  The following composition
  \[(\t^*\times \BA^1)\times_{\cz_\hb}\nf\, \xrightarrow{\ \Id\times p\ }\,
  (\t^*\times \BA^1)\times_{\cz_\hb}(\cz_\hb\o_{\BA^1}\cz_\hb)\,=\,
  (\t^*\times \BA^1)\times_ {\BA^1}\cz_\hb.
  \]
induces  a  graded algebra imbedding
  $\uh\t\o_{\chb}\zh\g\into\uh\t\o_{\zh\g}\C[\nf]$.

For a $(\uh\t\o_\chb\zh\g)$-module $M$ one can view
$M\hbo$ as a  $(\C[\fc]\o\C[\fc])$-module
via the imbedding
$\th^*\o \Id: \C[\fc]\o\C[\fc]\into \C[\t^*]\o\C[\fc]$.
From  the fourth paragraph of Section \ref{vpf-1} we deduce  that if $M$
is $\hb$-torsion free and $M\hbo$ is annihilated by the elements
$z\o 1-1\o z,\, z\in \C[\fc]$,
then the action  of $\uh\t\o_{\chb}\zh\g$ on
$M$ has a unique extension to a $(\uh\t\o_{\zh\g}\C[\nf])$-action.
Furthermore, the latter action gives an action of
the abelian Lie algebra $\Ga(\t^*, \th^*\fz_\fc)$ on $M\hbo$,
by \eqref{nf fz}.



Fix $\mu\in \BX$ and let $\tau_{\hb,\mu}: \uh\t\to\uh\t$
be a graded algebra automorphism 
defined by the assignment $\t\ni t\mto t-\hb\mu(t)$.
Let  $\J_\mu$ be  an ideal of  the algebra $\uh\t\o_{\zh\g}\C[\nf]$ 
 generated, as a $(\uh\t\o 1)$-module, by the elements
 $\frac{\tau_{\hb,\mu}(\hc_\hb(z))\o1 -1\o z}{\hb},\, z\in\zh\g$.
 From the isomorphism $\C[\nf]/\J\iso \C[\fc\times\BA^1]$, see Section
 \ref{vpf-1},  one obtains an isomorphism $(\uh\t\o_{\zh\g}\C[\nf])/\J_\mu\iso \uh\t$,
 such that the composition $\uh\t\o_\chb\zh\g\into(\uh\t\o_{\zh\g}\C[\nf])/\J_\mu\,\to\, \uh\t$
 sends $t\o z$ to $t\cdot\tau_\mu(\hc_\hb(z))$.

 For a graded $\uh\t\o_{\zh\g}\C[\nf]$-module $M$ we put
$M^{\J_\mu}=\{m\in M\mid xm=0\ \forall x\in\J_\mu\}$. 
 The ideal $\J_\mu$ is homogeneous, so $M^{\J_\mu}$  is a graded submodule of $M$.

 Recall that the space
 $(V\o\verh)^\u$, resp. $(V\o\verh)/\uph$,
 has the structure of a (graded) $(\uh\t,\uh\t)$-bimodule,
 resp. $(\uh\t,\zh\g)$-bimodule.
 We will view  these bimodues as
 as 
 $(\uh\t\o_\chb\zh\g)$-modules.
 The specialization of  each of the two $(\uh\t\o_\chb\zh\g)$-modules
 at $\hb=0$ is annihilated by  the elements
 $z\o 1-1\o z,\, z\in \C[\fc]$, since $(V\o\uh\g)\hbo$
 is symmetric as a $(\sym\g,\sym\g)$-bimodule.
 We conclude that $(V\o\verh)^\u$,
  resp.  $(V\o\verh)/\uph$, 
has the canonical structure of a graded $\uh\t\o_{\zh\g}\C[\nf]$-module.
 
\begin{prop}\label{vprop}
For any  finite dimensional  $G$-module  $V$  and
  $\mu\in{\mathsf Q}$
one has an inclusion
$(V\o\verh)^{\u,\mu} \sset ((V\o\verh)^\u)^{\J_\mu}$.
Moreover, the composition
\beq{comh}
\kap_{V,\mu,\hb}:\ (V\o\verh)^{\u,\mu}\, \into\, ((V\o\verh)^\u)^{\J_\mu}\,\to \,\big((V\o\verh)/\uph\big)^{\J_\mu}
\eeq
is an isomorphism of graded $\uh\t\o_{\zh\g}\C[\nf]$-modules.
\end{prop}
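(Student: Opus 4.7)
The plan is to deduce Proposition \ref{vprop} from its Poisson counterpart, Proposition \ref{vthm-cl}, via a graded $\hb$-deformation argument that leverages the $\Z$-grading in which $\hb$ sits in positive degree and on which the modules involved are bounded below.

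First, I would establish the inclusion $(V\o\verh)^{\u,\mu}\sset(V\o\verh)^{\J_\mu}$. For $v\o m$ of $\ad\t$-weight $\mu$, the bimodule--adjoint compatibility $\hb\cdot\ad t(v\o m)=t(v\o m)-(v\o m)t$ reduces to the scalar identity $t(v\o m)-(v\o m)t=\hb\mu(t)(v\o m)$; lifting through the asymptotic Harish-Chandra isomorphism $\hc_\hb:\zh\g\iso(\uh\t)^W$ yields
\[
\tau_{\hb,\mu}(\hc_\hb(z))\cdot(v\o m)\ =\ (v\o m)\cdot z,\qquad\forall z\in\zh\g.
\]
Hence $\tau_{\hb,\mu}(z^{(1)})-z^{(2)}$ annihilates $v\o m$; $\hb$-torsion freeness of $V\o\verh$ then lets one divide by $\hb$, yielding the inclusion.

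Next, I would unravel the $\hb=0$ specialization of the composite $\kappa_\hb$ in \eqref{comh} and match it with the classical map $\kap_{Z,V,\mu}$ of Proposition \ref{vthm-cl}. On the source, $(V\o\verh)^{\u,\mu}\hbo=(V\o\sym(\g/\u))^{\u,\mu}$, which the Killing form identifies with $(V\o\C[\b])^{U,\mu}$. On the target, $\uph\hbo=\{x-\psi(x):x\in\bu\}$, and a Taylor expansion in $\hb$ shows that the generators $(\tau_{\hb,\mu}(z^{(1)}_i)-z^{(2)}_i)/\hb$ of $\J_\mu$ specialize at $\hb=0$ to the Hamiltonian derivations $\{\vth^*z_i,-\}_{\op{tot}}$ of \S\ref{class-act} shifted by $\mu$. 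Feeding this through the Peter-Weyl identification and the slice trivialization $Z\cong G\times(\bbe+\g_\bbh)$ of \eqref{zz2} makes $\kappa_\hb\hbo$ into the restriction of $V$-valued polynomials from $\b$ to $\bbe+\g_\bbh$, which is precisely $\kap_{Z,V,\mu}$.

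Finally, graded Nakayama finishes the proof. By Lemma \ref{vlem}, both $(V\o\verh)^{\u,\mu}$ and $(V\o\verh)/\uph$ are free graded $\uh\t$-modules of rank $\dim V$ with grading bounded below; in particular the source of $\kappa_\hb$ is flat over $\C[\hb]$, and the target $((V\o\verh)/\uph)^{\J_\mu}$, as a submodule of a flat module, is $\hb$-torsion free. Proposition \ref{vthm-cl} provides an isomorphism at $\hb=0$: graded Nakayama upgrades surjectivity there to surjectivity of $\kappa_\hb$, while injectivity at $\hb=0$ together with $\hb$-torsion freeness of the source forces injectivity of $\kappa_\hb$. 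The principal obstacle is the second step: translating the $\J_\mu\hbo$-condition into the Poisson annihilation by $\{\vth^*f,-\}_{\op{tot}}$ used in \S\ref{class-act}, and reconciling the two models of the target (the algebraic quotient $\sym(\g/\u)/(\bu-\psi)$ versus the geometric slice $\C[\bbe+\g_\bbh]$ coming from \eqref{zz2}). This demands a careful expansion on the Rees scheme $\nt$ together with $G$-equivariant compatibility of the slice identifications from \S\ref{Ysec}.
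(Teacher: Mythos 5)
Your proposal follows the paper's proof in all essentials: the inclusion $(V\o\verh)^{\u,\mu}\sset(V\o\verh)^{\J_\mu}$ via $\hb$-torsion freeness, the identification of the $\hb=0$ specialization with $\kap_{Z,V,\mu}$ via \eqref{class} and the specialization of the $\J_\mu$-generators to $\{z_i,-\}_{\op{tot}}$, and a graded deformation/semicontinuity argument (your graded Nakayama is exactly the content of the paper's Lemma \ref{k-seq}, which the paper applies to the three-term complex \eqref{comp} rather than directly to the map $E\to F$). Two small precision points worth tightening: the injectivity step relies on $\hb$-torsion freeness of the \emph{target} $((V\o\verh)/\uph)^{\J_\mu}$ (to keep the short exact sequence $0\to\ker\to E\to\im\to 0$ exact mod $\hb$), not of the source as written; and the claim that $\kappa_\hb|_{\hb=0}=\kap_{Z,V,\mu}$ silently asserts that $\big((V\o\verh)/\uph\big)^{\J_\mu}\big|_{\hb=0}$ coincides with $(V\o\C[\bbe+\g_\bbh])^{\{\mu\}}$ — a priori there is only an inclusion, and one should note that surjectivity of $\kap_{Z,V,\mu}$ forces equality, after which the Nakayama step goes through.
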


Observe that the canonical algebra isomorphism
$\C[\nf]|_{\hb=1}\cong(\zh\g\o_\chb\zh\g)|_{\hb=1}=Z\g\o Z\g$ yields isomorphisms
$(\uh\t\o_{\zh\g}\C[\nf])|_{\hb=1}\cong \ut\o_{Z\g}(Z\g\o Z\g)=\ut\o Z\g$.
Thus,  Theorem \ref{vthm} follows from  the above
proposition  by specializing at $\hb=1$.

\begin{proof}[Proof of Proposition  \ref{vprop}]
  The proof of the first statement of the proposition is
  similar to the proof of \eqref{vhc}. Specifically,
  let $v\o m\in (V\o\verh)^{\u,\mu}$ and $t\in\t$.
  Using the equation
  $(v\o m)t=t(v\o m)-\hb\ad t(v\o m)$
  and mimicing computations in   the proof of \eqref{vhc},
  one finds that for all $z\in\zh\g$, we have
  $(v\o m)z=\tau_{\hb,\mu}(\hc_\hb(z))(v\o m)$.
Since $(V\o\verh)^{\u,\mu}$ is  a torsion free $\chb$-module,
we deduce that  this module is annihilated by the elements 
$\frac{\tau_{\hb,\mu}(\hc_\hb(z))\o1-1\o z}{\hb},\, z\in\zh\g$.
These elements generate the ideal $\J_\mu$,
and the first statement of the proposition follows.

To prove the second statement, recall that one has
graded algebra isomorphisms
$\zh\g\cong \C[\fc][\hb]\cong \C[z_1,\ldots,z_r, \hb]$,
where  $r=\dim\t$ and  $z_i,\ i=1,\ldots,r$,
are homogeneous elements of positive degree.
We identify $z_i$ with an element of $\zh\g$ and consider a diagram
\beq{comp}
0\ \to\ (V\o\verh)^{\u,\mu}\ \xrightarrow{\ \kap_{V,\mu,\hb}\ } \ (V\o\verh)/\uph\
\xrightarrow {\ \varsigma\ } \  \bigoplus_{1\leq i\leq r}\ \Big((V\o \verh)/\uph\Big)(\deg z_i-2),
\eeq
where $(\deg z_i-2)$ denotes a grading shift by $\deg z_i-2$, and the map $\varsigma$ is given
by the assignment
\[(v\o m)\ \mto\  \mbox{$\frac{\tau_{\hb,\mu}(\hc_\hb(z_1))\o1-1\o z_1}{\hb}$}(v\o m)\ \oplus\ldots\oplus\
\mbox{$\frac{\tau_{\hb,\mu}(\hc_\hb(z_r))\o1-1\o z_r}{\hb}$}(v\o m).\]

We use the notation of Section \ref{class-act}.
The map $\kap_{V,\mu,\hb}|_{\hb=0}$ may be  identified, via 
Lemma \ref{vlem}(ii) and   isomorphism \eqref{class},
with the composition
$V[\b]^{U,\mu}\into V[\b]\to V[\eh]$, where the
second map is a restriction map  induced
by the imbedding $\eh=\bbe+\g_\bbh\into\b$.
To describe the map $\varsigma\hbo$, recall that thanks to  Lemma \ref{fz-lem},
associated with $f\in\C[\fc]$,
there is a $G$-equivariant polynomial map $d(\vth^*f): \g_r\to \g$.
Let $\xi_f\in \Ga(\eh, \vth^*_\eh\fz_\fc)$ be the restriction of this map to $\eh$.
In the special case $f=z_i\hbo\in\C[\nf]\hbo= \C[\fc]$, we put
$\xi_i:=\xi_f$.

One has the following
algebra  homomorphisms
\[\C[\nf]\to \C[\nf]\hbo\,\to\,\Ga(\fc, \sym\fz_\fc)
  \xrightarrow{\ z\mto 1\o z\ }
  \C[\t^*]\o_{\C[\fc]}\Ga(\fc, \sym\fz_\fc)=\Ga(\eh, \sym(\vth^*_\eh\fz_\fc)),
\]
where the second map is an isomorphism that comes from \eqref{nf  fz} and
$\vth_\eh$ denotes the composite $\eh\into \g_r\onto\fc$.
Let $\be$ be  the composition   of the above homomorphisms.
Going through definitions, one finds
that $\be$
sends $\frac{z_i\o1-1\o z_i}{\hb}$ to $\xi_i\in
\Ga(\eh, \vth^*_\eh\fz_\fc)\subset
\Ga(\eh, \sym(\vth^*_\eh\fz_\fc))$.
It follows that the composition
\begin{align*}\uh\t\o_{\zh\t}\C[\nf]\xrightarrow{\ \ }
  (\uh\t \o_{\zh\t}\C[\nf])\hbo\ \xrightarrow{\Id\times \be}
                                   \ 
                                   \C[\t^*]\o_{\C[\fc]}\Ga(\eh, \sym(\vth^*_\eh\fz_\fc))
  \xrightarrow{\vkap_\eh}\
  \Ga(\eh, \sym(\g_\bbh) 
\end{align*}
sends the element
$\frac{\tau_{\hb,\mu}(\hc_\hb(z_i))\o1-1\o z_i}{\hb}$
to $\vkap_\eh(\xi_i)-\mu(\vkap_\eh(\xi_i))$.
The symbol $\Id$ above stands for the identification
$\uh\t\hbo=\C[\t^*\times\BA^1]=\C[\t^*]$, and
the map $\vkap_\eh$ was defined in  Section \ref{class-act}.
Combining this with formula \eqref{poissm}, it is not difficult to show that
the map $(V\o\verh)/\uph\to (V\o\verh)/\uph$ 
given by the assignment $(v\o m) \mto
\frac{\tau_{\hb,\mu}(\hc_\hb(z_i))\o1-1\o z_i}{\hb}(v\o m)$, 
 specializes at $\hb=0$ to the map
\[V[\eh]\to V[\eh],\quad \vv\mto \xi_i(\vv)-\mu(\vkap_\eh(\xi_i))\cdot\vv.\]

Thus, from the definition of the map $\varsigma$ in \eqref{comp}, resp.
vector space $V[\eh]^{\{\mu\}}$ in \eqref{f-act}, we deduce
\[\ker(\varsigma|_{\hb=0})\ \overset{(1)}=\ V[\eh]^{\{\mu\}}\ \overset{(2)}=\
  \im(\kap_{V,\mu})\ \overset{(3)}=\ 
\im(\kap_{V,\mu,\hb}|_{\hb=0}),
\]
where
(1), resp. (3),  follows from the description of the map $\varsigma|_{\hb=0}$,
resp. $\kap_{V,\mu,\hb}\hbo$, given  above and (2)   holds by Proposition \ref{vthm-cl}.

To complete the proof we observe that
the maps $\kap_{V,\mu,\hb}$ and $\varsigma$ in \eqref{comp} are morphisms of finite
rank free graded $\uh\t$-modules,
by  Lemma \ref{vlem} and Lemma \ref{vlem1}.
The first statement of the proposition implies that  $\im(\kap_{V,\mu,\hb})\subseteq\Ker(\varsigma)$.
Furthermore, we have shown that
 the specialization of  
 diagram  \eqref{comp} at $\hb=0$
gives an exact sequence.
Thus, the  second statement of the proposition  is a consequence of
a standard general semicontinuity result stated in the following lemma.
\end{proof}

\begin{lem}\label{k-seq} Let\ $0\to E'\xrightarrow{a} E \xrightarrow{b} E''$\ be a sequence of morphisms of free
$\Z$-graded $\uh\t$-modules of finite rank such  that $b\ccirc a=0$. If
the induced sequence\ $0\to E'\hbo\xrightarrow{\bar a} E\hbo\xrightarrow{\bar b} E''\hbo$\
is exact, then the original  sequence is also exact.
\end{lem}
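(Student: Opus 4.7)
The plan is to prove the lemma via a standard Nakayama-style semicontinuity argument, exploiting that $E'$, $E$, $E''$ are all $\hb$-torsion free (being free over $\uh\t$) and $\Z$-graded with grading bounded below (each is a finite rank free graded module over $\uh\t$, which is itself bounded below).

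First I would establish injectivity of $a$. Suppose $a(x) = 0$ for some non-zero $x \in E'$. Because $E'$ is $\hb$-torsion free, there is a maximal integer $k \geq 0$ with $x \in \hb^k E'$; write $x = \hb^k y$ with $y \notin \hb E'$. The equation $\hb^k a(y) = 0$ in the $\hb$-torsion free module $E$ forces $a(y) = 0$, whence the class $\bar y \in E'|_{\hb=0}$ lies in the kernel of $a|_{\hb=0}$, which is zero by hypothesis. Thus $y \in \hb E'$, a contradiction.

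For exactness at $E$, I would argue by downward-bounded induction on degree. Fix a homogeneous $x \in \ker b$ of degree $n$. Since $b|_{\hb=0}(\bar x) = 0$ and the specialized sequence is exact at $E|_{\hb=0}$, there exists $\bar y \in E'|_{\hb=0}$ of degree $n$ with $a|_{\hb=0}(\bar y) = \bar x$; lift it to $y_0 \in E'$, so $x - a(y_0) \in \hb E$. Write $x - a(y_0) = \hb x_1$ with $x_1 \in E$ (uniquely, by $\hb$-torsion freeness of $E$); since $\hb$ carries degree $2$, the element $x_1$ has degree $n-2$, strictly smaller than $n$. The computation $\hb \cdot b(x_1) = b(x) - b(a(y_0)) = 0$, together with $\hb$-torsion freeness of $E''$, gives $b(x_1) = 0$. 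The inductive hypothesis then provides $y_1 \in E'$ with $a(y_1) = x_1$; then $a(y_0 + \hb y_1) = x$, as required. The base case is automatic: in the lowest non-zero degree of $E$, the element $x_1$ would live in a degree where the modules vanish, forcing $x_1 = 0$ and $a(y_0) = x$ outright. No genuine obstacle arises; the entire content of the argument is the combination of $\hb$-torsion freeness (from freeness) with the boundedness of the grading, so the proof is purely formal once these ingredients are noted.
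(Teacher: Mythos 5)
Your proof is correct. The paper states this lemma without proof (it appends \verb|\qed| immediately after the statement, labeling it ``a well-known general semicontinuity result''), so there is no internal argument to compare against; yours is precisely the standard argument. The two ingredients you isolate --- $\hb$-torsion freeness of free $\uh\t$-modules, so that $\hb^k z = 0$ forces $z = 0$, and boundedness below of the grading ($\hb$ and $\t$ sit in degree $2$ in $\uh\t$), which grounds the descending induction on degree --- are exactly what the implicit argument rests on. The injectivity step via writing $x = \hb^k y$ with $y \notin \hb E'$ and the exactness step via $x = a(y_0) + \hb x_1$ with $\deg x_1 = n-2$ are both sound, and the base case is handled correctly by vacuousness below the bottom degree.
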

\begin{proof}[Proof of Lemma]   Let $p: E\to E\hbo$ denote the projection. From definitions, we find
  $p\inv(\im(\bar a))=\im(a) +\hb E$, resp. $p\inv(\Ker(\bar b))=b\inv(\hb E'')$.
  Thus, we have an equality
  \[\im(a) +\hb E=b\inv(\hb E''),\]
  since $\im(\bar a)=\Ker(\bar b)$ by the assumptions of the lemma.

    Assume by contradiction
    that $\im(a)\subsetneq \Ker(b)$.
    All the  modules involved are  graded and  the gradings
  on these modules  are bounded below, since  the algebra $\uh\t$ is
  graded by nonnegative integers and the modules are  finitely generated.
      Let $e$ be a homogeneous element
  of $E$ of minimal degree such that $e\in \Ker(b)\setminus\im(a)$. Since $e\in b\inv(b(e))=b\inv(0)
  \subseteq b\inv(\hb E'')$,  the displayed
  equation above implies that there are homogeneous elements
  $e'\in E'$ and $x\in E$ such that $e=a(e')+\hb x$.
  We compute $\hb\, b(x)=b(\hb x)=b(e-a(e'))=b(e)-b(a(e'))=0-0=0$.
  The module $E''$ being free, hence $\hb$-torsion free,
  we deduce that $x\in \Ker(b)$. Since $\deg x=\deg e-\deg\hb<\deg e$, 
  we must have  $x\in \im(a)$, by our choice of the element $e$.
    Writing $x=a(y)$, we obtain
    $e=a(e')+\hb x=a(e')+\hb\,  a(y)=a(e'+\hb y)$.
    Thus, $e\in\im(a)$, a contradiction.  
  \end{proof}
\subsection{Proof of Theorem \ref{grthm}}\label{sec6}
Below, we are going to apply  results of  \cite{BF} and \cite{GR}.
  The notation used in these papers is slightly
  different from ours
  due to a different normalization of the Harish-Chandra homomorphism.
  Specifically,  the Harish-Chandra homomorphism
  used in {\em  loc cit}
  is defined as a composition $\hc'_\hb:=\tau_\rho\ccirc\hc_\hb$.
  The
   resulting  imbedding $\hc'_\hb:\zh\g\into\uh\t$
 yields an isomorphism $\zh\g\cong
 (\uh\t)^W=\C[\t^*\times\BA^1]^W=\C[\fc][\hb]$
 (recall that $\rho$ denotes  the half-sum of positive roots and
 $W$-invariants are taken with respect to  
 the dot-action
  of $W$). 
We view  $\uh\t$  as a $\zh\t$-algebra
via the imbedding ~$\hc'_\hb$ and  write $\nt$ for the corresponding algebra
$\uh\t\o_{\zh\g}\C[\nf]$.

We use the setting of Section \ref{satake-intro}.
Thus, we have the groups $\bt=\Gm \times T^\vee \subset \Gm\ltimes\GO$,
and there are
canonical isomorphisms
$H^\hdot_\bt(\pt)=\C[\t^*\times\BA^1]=\uh\t$, resp.
$H^\hdot_{\Gm\ltimes\GO}(\pt)=
H^\hdot_{\Gm\times  G^\vee}(\pt)=\C[\fc][\hb]
=\zh\g=\C[\cz_\hb]$.
This yields 
 natural graded algebra maps
 \begin{align*}
   \C[\cz_\hb\times_{\BA^1}\cz_\hb]=\C[\fc]\o\C[\fc]\o\chb\ &\ \iso H^\hdot_{G^\vee}(\pt)\o H^\hdot_{G^\vee}(\pt)\o H^\hdot_\Gm(\pt)\\
  &\ \to  H^\hdot_{\Gm}\big(\GO\backslash G^\vee(\KK)/\GO\big)\ \iso\ 
    H^\hdot_{{\mathbb G}_m\times G^\vee}(\Gr).
    \end{align*}

It was shown in \cite[Theorem 1]{BF} that the composite of  the above maps
has a unique extension to a graded algebra isomorphism $\C[\nf]\iso H^\hdot_{{\mathbb G}_m\times\Gv}(\Gr)$.
We deduce $\uh\t$-algebra isomorphisms
\[
\nt=\uh\t\o_{\zh\g}\C[\nf]\ccong 
\C[\t^*\times\BA^1]
\o_{\C[\fc\times\BA^1]}H^\hdot_{{\mathbb G}_m\times G^\vee}(\Gr)
\ccong H^\hdot_\bt(\Gr).
\]
Below, we identify the algebras $H^\hdot_\bt(\Gr)$ and $\nt$.

Associated with $\la\in\BX$,  there is a graded algebra homomorphism
$\eta_\la: \nt\to\uh\t$,
a $\rho$-shifted version of the
homomorphism
$\uh\t\o_{\zh\g}\C[\nf]\to\uh\t$
considered in Section \ref{vpf},
such that the composition $\uh\t\o_\chb\zh\g \into \nt
\xrightarrow{\eta_\la}\uh\t$
sends $t\o z$ to $t\cdot \hc'_\hb(z)$.
The  kernel of $\eta_\la$ is 
an ideal $\jj_\la$ of $\nt$
 generated by the elements $\frac{\tau_{\hb,\la}(\hc'_\hb(z))-z}{\hb},
 \, z\in\zh\g$. This ideal is a $\rho$-shifted counterpart of the
 ideal $\J_\la$ generated by the elements
 $\frac{\tau_{\hb,\la}(\hc_\hb(z))-z}{\hb}$, see Section \ref{vpf}.

 Recall the imbedding $i_\la: \{\pt_\la\} \into \Gr$,
 where $\pt_\la$ is  the  $\bt$-fixed point associated with  $\la$.
It follows from  \cite[\S 3.2]{BF}  that  the restriction map $i_\la^*:  H^\hdot_\bt(\Gr)\to
H^\hdot_\bt(\pt_\la)$ corresponds,
via the  isomorphisms above,
to the   homomorphism $\eta_\la$.
Thus, the notation $\jj_\la$
agrees with  the notation  $\jj_\la=\Ker(i^*_\la)$
used in  Section  \ref{satake-intro}. 
We conclude that there are
canonical graded algebra isomorphisms
\beq{Hmod}
H^\hdot_\bt(\pt_\la)\ccong  H^\hdot_\bt(\Gr)/\jj_\la\ccong
\nt/\jj_\la\ccong \uh\t.
\eeq

For any object  $\cf$  of  the $\bt$-equivariant
constructible derived category of $\Gr$,
the cohomology  $H_\bt^\hdot(\Gr,\cf)$
has the natural structure of a graded $H_\bt^\hdot(\Gr)$-module.
The $H^\hdot_\bt(\Gr)$-action on
$H_\bt^\hdot((i_\la)_!i_\la^!\cf)$
 factors through the quotient 
 $i_\la^*: H^\hdot_\bt(\Gr)\to H^\hdot_\bt(\pt_\la)$,
  since $\supp((i_\la)_!i_\la^!\cf)\sset\{\pt_\la\}$.
Using  \eqref{Hmod}, this translates
into the statement that the image of the
canonical $\nt $-module map $(i_\la)_!: H_\bt^\hdot(i_\la^!\cf) \to
H^\hdot_\bt(\Gr,\cf)$ induced by the adjunction $(i_\la)_!i_\la^!\cf\to \cf$, 
 is annihilated by the ideal $\jj_\la\subset\nt $.

 Now let $\cf$ be an object of the Satake category $\sat$.
 It is known that the group $H_\bt^\hdot(i_\la^!\cf)$, resp.
$H^\hdot_\bt(\Gr,\cf)$, viewed as a graded module over the
subalgebra
$\uh\t\sset \nt$, is a 
free graded module of finite rank;
furthermore, the localization theorem in equivariant cohomology
implies that the map $(i_\la)_!: H_\bt^\hdot(i_\la^!\cf) \to
H^\hdot_\bt(\Gr,\cf)$
is injective, cf. eg. \cite[\S 6.1-6.2]{GR}.

Combining all the above, we see that
proving Theorem \ref{grthm} is equivalent to
showing that the injective map $(i_\la)_!: H_\bt^\hdot(i_\la^!\cf)
\into (H^\hdot_\bt(\Gr,\cf))^{\jj_\la}$
is a bijection.
Thus, writing $P(M)=\sum_{k\in \Z} \dim M_k \cdot t^k$
for the Poincar\'e series of a $\Z$-graded vector space $M=\oplus_{k\in \Z}\, M_k$,
we are reduced to proving the following equality
of  Poincar\'e series:
\beq{peq}
P\big(H_\bt^\hdot(i_\la^!\cf)\big)=P\big((H^\hdot_\bt(\Gr,\cf))^{\jj_\la}\big).
\eeq

To prove this equality, we interpret each side in terms of 
geometric Satake. To this end, for each $\mu\in \BX$
we define a  $(\uh\t,\uh\g)$-bimodule $\verh(\mu)$,
a $\mu$-twisted analogue  of  $\verh$, as follows.
The bimodule $\verh(\mu)$ has
the same underlying vector space as $\verh$
and the same  right  $\uh\g$-action. The left
action of $t\in\t\subset\uh\t$ on  $\verh(\mu)$ is defined in terms of the
 left action of $t$ on  $\verh$ by the formula
$t: m\mto  (t-\hb\mu(t))m$.  Put $\la:=\mu-\rho$.
 Then, the  isomorphism $\kap_{V,\mu,\hb}$
 of Proposition \ref{vprop}  translates into an isomorphism
 $(V\o \verh(\rho))^{\u,\la}\iso \big((V\o\verh(\rho)\big)^{\jj_\la}$
 of graded $\nt$-modules.

Let $\nt  \grmod$ be the abelian category of $\Z$-graded $\nt $-modules,
$\rep(G)$ the  category of finite dimensional $G$-representations,
and $\BS: \rep(G) \iso \sat$  the geometric Satake equivalence.
According to \cite[Theorem 6]{BF}, there is an isomorphism 
\beq{BFu} ((-)\o\verh(\rho))/\uph\ccong H_\bt^\hdot\big(\Gr,\BS(-)\big),
\eeq
of  functors 
$\rep(G)\to \nt  \grmod$.
Hence, for $\cf=\BS(V)$ and any $\la\in\BX$,  there is an isomorphism
$\big((V\o\verh(\rho))/\uph\big)^{\jj_\la}=\big(H_\bt^\hdot(\Gr,\cf)\big)^{\jj_\la}$.
On the other hand, it  follows from
\cite[Theorem 2.2.4]{GR}, 
 cf. also \cite[Remark 2.2.6]{GR},  that
the graded $\uh\t$-module $H_\bt^\hdot(i_\la^!\cf)$ is isomorphic to
$(V\o \verh(\rho+\la))^{\b}$, where $\b$ is the Borel that contains $\u$.
It is clear that  we have $(V\o \verh(\rho+\la))^{\b}= (V\o \verh(\rho))^{\u,\la}$.
Thus, combining the above isomorphisms of graded
modules we obtain the following chain of equalities of
the corresponding Poincar\'e series:
\begin{align*}
    P\big(H_\bt^\hdot(i_\la^!\cf)\big)
    &\xymatrix{
\ar@{=}[rr]^<>(0.5){\text{\cite{GR}}}&&
                                        P\big((V\o \verh(\rho))^{\u,\la}\big)}
  \\
  &\xymatrix{\ar@{=}[rr]^<>(0.5){\text{Prop. \ref{vprop}}}
    &&
    P\big((V\o\verh(\rho))/\uph)^{\jj_{\la}}\big)
    \ar@{=}[rr]^<>(0.5){\text{\cite{BF}}}&&P\big(H_\bt^\hdot(\Gr,\cf)^{\jj_\la}\big).
    }
\end{align*}

This proves \eqref{peq}, and Theorem \ref{grthm} follows.
\qed

\bibliographystyle{plain}

\end{document}